\documentclass[10pt,reqno]{amsproc}

\usepackage{amsmath,amsthm,amsfonts,amscd,amssymb,amstext,bbm}
\usepackage{xparse,mathrsfs,mathtools,todonotes,listings}
\usepackage{vmargin}
\usepackage{relsize}
\usepackage[colorlinks,citecolor=blue]{hyperref}
\pagestyle{plain}

\newtheorem{theorem}{Theorem}

\newtheorem{remark}[theorem]{Remark}

\newtheorem{lemma}[theorem]{Lemma}
\newtheorem{proposition}[theorem]{Proposition}
\newtheorem{corollary}{Corollary}

\numberwithin{equation}{section}
\numberwithin{theorem}{section}
\usepackage{suffix}
\usepackage{mathtools}
\usepackage{xparse}

\DeclarePairedDelimiterX\MeijerM[3]{\lparen}{\rparen}
{\begin{smallmatrix}#1 \\ #2\end{smallmatrix}\delimsize\vert\,#3}
\newcommand\MeijerG[8][]{%
  G^{\,#2,#3}_{#4,#5}\MeijerM[#1]{#6}{#7}{#8}}

\WithSuffix\newcommand\MeijerG*[7]{%
  G^{\,#1,#2}_{#3,#4}\MeijerM*{#5}{#6}{#7}}

\title{On the number of real eigenvalues of a product of truncated orthogonal random matrices}

\author{Alex Little}
\address[A. Little]{\newline School of Mathematics, University of Bristol, BS8 1UG, United Kingdom. \textit{al17344@bristol.ac.uk}}

\author{Francesco Mezzadri}
\address[F. Mezzadri]{\newline School of Mathematics, University of Bristol, BS8 1UG, United Kingdom. \textit{f.mezzadri@bristol.ac.uk}}

\author{Nick Simm}
\address[N. Simm]{\newline Department of Mathematics, University of Sussex, Falmer Campus, Brighton, BN1 9RH, United Kingdom. \textit{n.j.simm@sussex.ac.uk}}
\date{}

\begin{document}

\maketitle

\begin{abstract}
Let $O$ be chosen uniformly at random from the group of $(N+L) \times (N+L)$ orthogonal matrices. Denote by $\tilde{O}$ the upper-left $N \times N$ corner of $O$, which we refer to as a \textit{truncation} of $O$. In this paper we prove two conjectures of Forrester, Ipsen and Kumar \cite{FIK20} on the number of real eigenvalues $N^{(m)}_{\mathbb{R}}$ of the product matrix $\tilde{O}_{1}\ldots \tilde{O}_{m}$, where the matrices $\{\tilde{O}_{j}\}_{j=1}^{m}$ are independent copies of $\tilde{O}$. When $L$ grows in proportion to $N$, we prove that
\begin{equation*}
\mathbb{E}(N^{(m)}_{\mathbb{R}}) = \sqrt{\frac{2m L}{\pi}}\,\mathrm{arctanh}\left(\sqrt{\frac{N}{N+L}}\right) + O(1), \qquad N \to \infty. 
\end{equation*}
We also prove the conjectured form of the limiting real eigenvalue distribution of the product matrix. Finally, we consider the opposite regime where $L$ is fixed with respect to $N$, known as the \textit{regime of weak non-orthogonality}. In this case each matrix in the product is very close to an orthogonal matrix. We show that $\mathbb{E}(N^{(m)}_{\mathbb{R}}) \sim c_{L,m}\,\log(N)$ as $N \to \infty$ and compute the constant $c_{L,m}$ explicitly. These results generalise the known results in the one matrix case due to Khoruzhenko, Sommers and \.{Z}yczkowski \cite{KSZ10}.
\end{abstract}

\section{Introduction and main results}

Research on products of random matrices started in 1960 with the work of Furstenberg and Kesten~\cite{FK60}. Early investigations concerned the properties and computation of the Lyapunov exponents of products of random matrices when the dimension $N$ is fixed and the number of factors $m$ grows to infinity, see e.g.~\cite{BL85,CPV93} and references therein.  More recently there were several new developments related to the opposite regime, namely when the number of factors $m$ is finite and the matrix dimension $N$ tends to infinity, or is also finite \cite{AI15}. Early progress in this direction came with the work of Burda et al.~\cite{BJW10}, who
computed the spectral density for a product consisting of independent non-Hermitian matrices whose elements are i.i.d. standard
complex normal random variables, the \emph{complex Ginibre ensemble,} in the limit $N \to \infty$ and for finite $m$.  Subsequently, Akemann and Burda~\cite{AB12} discovered that the determinantal structure of the eigenvalue point process known to hold for a single complex Ginibre matrix continues to hold for products of such matrices. However, the correlation kernel that arises for products is more complicated and has to be expressed in terms of Meijer G-functions. The determinantal machinery allowed them to analyse the point process of eigenvalues in various microscopic regimes for fixed $m$. A similar theory has been developed to analyse products of truncated unitary random matrices \cite{ABKN14, KNTK16, LW16}. Recently these developments have been extended to study the question of double scaling limits as both $m$ and $N$ tend to infinity simultaneously \cite{ABK20, LWW18, LW19}.

The focus of this paper will be on products of real random matrices. One of the most well-studied examples consists of matrices whose elements are real i.i.d. standard normal random variables, known as the \emph{real Ginibre ensemble}. A distinguishing feature of the real case is that there is a non-zero probability of finding purely real eigenvalues and therefore the point process of eigenvalues consists of two correlated components of purely real and complex conjugated points. This makes the study of real random matrices considerably more challenging than their complex counterparts. The real Ginibre ensemble was introduced in 1965 by Ginibre~\cite{Gin65}, but after the original paper virtually no progress was made until the work of Edelman et al.~\cite{EKS94} who computed the expected number of real eigenvalues. Instead of the determinantal structure known for complex matrices, the eigenvalues of the real Ginibre ensemble form a \textit{Pfaffian} point process~\cite{FN07,SW08,BS09}. Statistics of the real eigenvalues have been shown to arise in many diverse areas, notably in connection to annihilating Brownian motions \cite{TZ11, TKZ12, PTZ17, FTZ20}, random dynamical systems \cite{FK15} and recently to an inverse scattering solution of the Zakharov-Shabat system \cite{BB20}. Real eigenvalues of products have been applied to physical problems \cite{L13,HJL15} and recently appeared in connection to random Markov matrices \cite{IM18}. The analogue of the results discussed above for products of complex matrices also hold here: products of independent real Ginibre matrices are again described by a Pfaffian point process \cite{IK14, FI16}.

Much less is known about truncated orthogonal random matrices and their products. Their study began in the single matrix case with the work of Khoruzhenko et al.~\cite{KSZ10} who showed that the eigenvalues form a Pfaffian point process with an explicit correlation kernel. Unlike the real Ginibre ensemble, there is a qualitatively distinct asymptotic regime known as 
\emph{weak non-orthogonality}, defined by truncating only a finite number of rows and columns from the original orthogonal matrix. This asymptotic regime has shown up in connection to the zeros of random Kac polynomials~\cite{F10}. More recently, it appeared in ~\cite{PS18,GP19} where it is related to the persistence exponent of the 2d diffusion equation with random initial conditions. As with the real Ginibre ensemble, the Pfaffian structure of a single truncated orthogonal random matrix has been extended to products of such matrices \cite{IK14}. Using this integrable structure, the probability that all eigenvalues of a product of truncated orthogonal random matrices are real was calculated in \cite{FK18}. Simplified expressions for the correlation kernel of the Pfaffian point process were obtained in the recent work \cite{FIK20}.

We now discuss the main results of the paper. Let $O$ denote a random orthogonal matrix of size $(N+L) \times (N+L)$ sampled uniformly with respect to the Haar measure on the orthogonal group and let $\tilde{O}$ denote the upper-left $N \times N$ truncation of $O$. We are interested in the product matrix $X^{(m)} := \tilde{O}_{1}\ldots \tilde{O}_{m}$ where $\{\tilde{O}_{j}\}_{j=1}^{m}$ are independent copies of $\tilde{O}$. In our first result we shall make the assumption that $L$ grows in proportion to $N$. Let $L:=L_{N} \to \infty$ be a sequence of positive integers such that 
\begin{equation}
\gamma := \frac{L_{N}}{N} \to  \tilde{\gamma}, \qquad N \to \infty, \label{hyp1}
\end{equation}
where $\tilde{\gamma} > 0$. Throughout the paper we will suppress the $N$-dependence of $L$ and $\gamma$ for notational convenience. We denote by $N_{\mathbb{R}}^{(m)}$ the total number of real eigenvalues of $X^{(m)}$. 

\begin{theorem}
\label{th:mainthm}
Let $L := L_{N}$ be such that hypothesis \eqref{hyp1} holds and define $\alpha := \frac{1}{1+\gamma}$. Then for any fixed $m \in \mathbb{N}$, we have
\begin{equation}
\mathbb{E}(N^{(m)}_{\mathbb{R}}) = \sqrt{\frac{2m\gamma N}{\pi}}\,\mathrm{arctanh}(\sqrt{\alpha}) + O(1), \qquad N \to \infty. \label{mainest}
\end{equation}
\end{theorem}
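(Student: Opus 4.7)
The plan is to reduce the computation of $\mathbb{E}(N_{\mathbb{R}}^{(m)})$ to a single integral of the one-point density of real eigenvalues,
\begin{equation*}
\mathbb{E}(N_{\mathbb{R}}^{(m)}) = \int_{-1}^{1} \rho_{\mathbb{R}}^{(m)}(x)\,dx,
\end{equation*}
using the Pfaffian structure of the real eigenvalue point process of $X^{(m)}$ established in \cite{IK14} and the simplified correlation kernel of \cite{FIK20}. From the latter, $\rho_{\mathbb{R}}^{(m)}$ should admit a compact representation in terms of Meijer $G$-functions of the type $G^{m,1}_{m+1,m+1}$, generalising the $m=1$ formula of \cite{KSZ10}. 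The aim is then to perform a large-$N$ asymptotic analysis of this representation, separately in the bulk and at the edge.

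First, I would extract a contour integral for $\rho_{\mathbb{R}}^{(m)}(x)$, essentially as the on-diagonal $(1,1)$ entry of the $2\times 2$ Pfaffian kernel. Casting the Meijer $G$-functions in Mellin--Barnes form should yield an integrand suited to steepest-descent analysis. The saddle-point configuration is expected to localise the integrand in the bulk to give
\begin{equation*}
\rho_{\mathbb{R}}^{(m)}(x) = \sqrt{\frac{mL}{2\pi}}\cdot\frac{1}{1-x^2}\,(1+o(1))
\end{equation*}
uniformly for $|x|$ bounded away from the macroscopic spectral edge at $\sqrt{\alpha}$. Since $\int\frac{dx}{1-x^2}=\mathrm{arctanh}(x)$, integrating this bulk profile over $(-\sqrt{\alpha}+\delta,\sqrt{\alpha}-\delta)$ produces the leading term $\sqrt{2mL/\pi}\,\mathrm{arctanh}(\sqrt{\alpha})$ up to a controllable error in $\delta$. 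The characteristic factor $\sqrt{m}$ should appear naturally from the $m$-fold saddle-point Hessian.

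The second ingredient is a uniform control on $\rho_{\mathbb{R}}^{(m)}$ across the spectral edge near $|x|=\sqrt{\alpha}$. Here I would seek a scaling limit of the kernel in a window of width $\Theta(N^{-1/2})$ around $\pm\sqrt{\alpha}$, which by analogy with the real Ginibre edge analysis of \cite{FN07} is expected to yield an error-function profile. Integrating this profile contributes $O(1)$, and outside the window (for $|x|>\sqrt{\alpha}+\delta$) the density should be superexponentially small by standard steepest-descent estimates, giving a negligible contribution. Patching the bulk, edge and exterior estimates together, together with letting $\delta\to 0$ at an appropriate rate, should reproduce \eqref{mainest}.

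The main obstacle will be obtaining sufficiently uniform asymptotics of the Meijer $G$-function representation of $\rho_{\mathbb{R}}^{(m)}$ to bridge bulk and edge with a final error of exactly $O(1)$ rather than, say, $O(\log N)$ or $O(N^{1/4})$. For $m\geq 2$ the presence of $m$ saddle points and the need to deform the Mellin contour through the coalescence of saddles at the edge makes this step substantially more delicate than in the one-matrix treatment of \cite{KSZ10}. Some additional care will also be needed at $x=0$, where integer-spacing of the Mellin contour may produce lower-order contributions, but these should still fit within the $O(1)$ budget.
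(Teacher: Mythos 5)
Your plan is to extract the one-point density $\rho_N(x)$ as a Meijer $G$/Mellin--Barnes contour integral, obtain uniform asymptotics by steepest descent, and then integrate over $x$. The paper instead keeps the density in the \emph{double-integral} form
\begin{equation*}
\mathbb{E}(N^{(m)}_{\mathbb{R}}) = 2\int_{0}^{1}dx\int_{-1}^{1}dy\,|x-y|\,w_{L}(x)w_{L}(y)f_{N-2,L}(xy),
\end{equation*}
obtains uniform Laplace asymptotics for the factors $w_L$ and $f_{N-2,L}$ separately, and performs a two-dimensional Laplace analysis around the diagonal $y=x$. Those are genuinely different routes, so in principle yours is worth reviewing on its own terms --- but there are concrete problems with it as stated.

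\textbf{The bulk density you assert is wrong for $m>1$.} You claim
\begin{equation*}
\rho_{\mathbb{R}}^{(m)}(x) = \sqrt{\tfrac{mL}{2\pi}}\,\frac{1}{1-x^2}\,(1+o(1)),
\end{equation*}
supported up to $|x|\approx\sqrt{\alpha}$. In fact (Theorem \ref{th:densthm}) the unnormalised density behaves as
\begin{equation*}
\rho_N(x)\sim\sqrt{\tfrac{L}{2\pi m}}\,\frac{1}{|x|^{1-\frac1m}\left(1-|x|^{\frac{2}{m}}\right)},\qquad |x|<\alpha^{m/2},
\end{equation*}
so for $m\geq 2$ your formula has the wrong $x$-dependence (no $|x|^{1/m-1}$ singularity, and $1-x^2$ in place of $1-|x|^{2/m}$), the wrong spectral edge ($\sqrt{\alpha}$ rather than $\alpha^{m/2}$), and the opposite power of $m$ in the prefactor ($\sqrt{m}$ rather than $1/\sqrt{m}$). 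It is a coincidence that integrating your (incorrect) profile over $(-\sqrt{\alpha},\sqrt{\alpha})$ reproduces the correct leading term of Theorem \ref{th:mainthm}; that is precisely the substitution $x=u^m$ in disguise, and it would not survive the more detailed error accounting that is required.

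\textbf{The singularity at the origin is the main technical obstruction, not a lower-order nuisance.} You mention in passing that ``integer-spacing of the Mellin contour may produce lower-order contributions'' near $x=0$. For $m\geq 2$ the density genuinely blows up like $|x|^{1/m-1}$ as $x\to0$, and the asymptotics of both the weight $w_L$ and the series $f_{N-2,L}$ are not uniform there. The paper handles this by a separate microscopic analysis of a layer of width $\sim N^{-m/2}$ near the origin (Lemma \ref{lem:micro}), after first establishing precise uniform error estimates on $w_L$ and $f_{\infty,L}$ (Propositions \ref{prop:uniformweight} and \ref{prop:fint}). An approach that works directly with the contour-integral form of $\rho_N$ would still have to confront the coalescence of $m$ poles/saddles at the origin, and you would need to show that its contribution is $O(1)$ rather than, say, $O(\log N)$. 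Until you specify how that is done your argument has a gap in the region where the paper has to work hardest. A similar remark applies to the edge: the paper does not derive an error-function edge profile; it shows directly (Lemma \ref{lem:edge}, via a Laplace bound near $u=\sqrt{\alpha}$) that the window $|xy-\alpha^m|\lesssim N^{-1/2}$ contributes $O(1)$.
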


Next we consider the individual real eigenvalues $\lambda_{1},\ldots,\lambda_{n}$ of $X^{(m)}$, where $n = N^{(m)}_{\mathbb{R}}$. The averaged global density of real eigenvalues is defined by
\begin{equation}
\rho_{N}(x) := \mathbb{E}\left[\sum_{j=1}^{n}\delta(x-\lambda_{j})\right], \label{eigdensdef}
\end{equation}
and we denote its appropriately normalized version $\tilde{\rho}_{N}(x) = \frac{1}{\mathbb{E}(N^{(m)}_{\mathbb{R}})}\rho_{N}(x)$.
\begin{theorem}
\label{th:densthm}
Let $L := L_{N}$ be such that \eqref{hyp1} holds and define $\tilde{\alpha} = \frac{1}{1+\tilde{\gamma}}$. Then for any bounded continuous test function $h$ and fixed $m \in \mathbb{N}$, we have
\begin{equation}
\lim_{N \to \infty}\int_{-1}^{1}h(x)\tilde{\rho}_{N}(x)\,dx = \int_{-1}^{1}h(x)\rho(x)\,dx, \label{hlimit}
\end{equation}
where
\begin{equation}
\rho(x) = \frac{1}{2m\,\mathrm{arctanh}(\sqrt{\tilde{\alpha}})}\,\frac{1}{|x|^{1-\frac{1}{m}}(1-|x|^{\frac{2}{m}})}\mathbbm{1}_{x \in (-\tilde{\alpha}^{\frac{m}{2}},\tilde{\alpha}^{\frac{m}{2}})}. \label{rhodef}
\end{equation}
Furthermore, for any fixed $x \in [-1,1]\setminus \{-\tilde{\alpha}^{\frac{m}{2}},0,\tilde{\alpha}^{\frac{m}{2}}\}$, we have the pointwise limit $\lim_{N \to \infty}\tilde{\rho}_{N}(x) = \rho(x)$.
\end{theorem}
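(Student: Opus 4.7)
The plan is to derive pointwise asymptotics of the unnormalized density $\rho_{N}(x)$ from the explicit Pfaffian structure of the eigenvalue point process of $X^{(m)}$, and then pass to weak convergence by an $L^{1}$ argument using the mass normalization supplied by Theorem~\ref{th:mainthm}. The simplified correlation kernel obtained in \cite{FIK20} gives $\rho_{N}(x)$ as a finite sum of $N$ terms built from ratios of Gamma functions and powers of $|x|^{2m}$ (equivalently, a Meijer G-function representation). Presumably the same sum/integral representation that drives the asymptotic count in Theorem~\ref{th:mainthm} serves as the starting point here, this time evaluated pointwise in $x$ rather than integrated.

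Fix $x$ with $|x|^{2/m} \in (0,\tilde{\alpha})$. Applying Stirling's formula to the Gamma-ratios and writing the sum as a Riemann sum over $t = k/N \in [0,1]$, a standard Laplace analysis picks out a unique interior saddle point $t_{*}(x)$ depending explicitly on $|x|^{2/m}$ and $\tilde{\alpha}$, and yields an asymptotic of the form $\rho_{N}(x) = \sqrt{N}\,A(x) + o(\sqrt{N})$. The change of variables $y = |x|^{2/m}$ reduces the computation essentially to the single-matrix case, and a direct simplification identifies the prefactor as $A(x) = \sqrt{2m\tilde{\gamma}/\pi}\,\mathrm{arctanh}(\sqrt{\tilde{\alpha}})\,\rho(x)$. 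Dividing by $\mathbb{E}(N^{(m)}_{\mathbb{R}}) = \sqrt{2m\tilde{\gamma}N/\pi}\,\mathrm{arctanh}(\sqrt{\tilde{\alpha}}) + O(1)$ from Theorem~\ref{th:mainthm} gives the pointwise limit $\tilde{\rho}_{N}(x) \to \rho(x)$. For $|x|^{2/m} > \tilde{\alpha}$ the same saddle analysis shows the sum is exponentially small in $N$, so $\tilde{\rho}_{N}(x) \to 0$, consistent with the stated support of $\rho$.

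To upgrade pointwise convergence a.e.\ on $[-1,1]$ to the weak statement \eqref{hlimit}, observe that $\tilde{\rho}_{N}$ is a probability density on $[-1,1]$ by construction, and the substitution $u = |x|^{1/m}$ yields $\int_{-1}^{1}\rho(x)\,dx = 1$; hence Scheffé's lemma gives $L^{1}$-convergence $\tilde{\rho}_{N} \to \rho$ on $[-1,1]$, which implies \eqref{hlimit} for any bounded continuous $h$.

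The main technical obstacle is the saddle analysis itself: the saddle $t_{*}(x)$ degenerates as $x \to 0$ (where $\rho$ has an integrable singularity when $m \geq 2$) and as $|x| \to \tilde{\alpha}^{m/2}$ (the edge of the support), so uniform asymptotics in $x$ are not obvious, and one must also control the purely complex Pfaffian corrections near $|x|=1$ that might in principle contaminate the real-real diagonal kernel. It is precisely this non-uniformity that forces the argument to prove only pointwise convergence directly and to recover the integrated statement via the Scheffé reduction together with the sharp count of Theorem~\ref{th:mainthm}.
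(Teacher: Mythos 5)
Your high-level strategy is sound and, for the weak-convergence part, genuinely different from the paper's. The paper establishes \eqref{hlimit} directly: one inserts the bounded test function $h$ into the integral representation from Corollary~\ref{cor:denint}, repeats the localisation of Lemmas~\ref{lem:prequad}--\ref{lem:critasympt}, shifts $v \to u + v/\sqrt{N}$ as in \eqref{shiftiln}, and applies dominated convergence via \eqref{tl-limit}--\eqref{qlimit}; the pointwise limit is then proved \emph{separately} afterwards, since it requires additional exponential-decay estimates for fixed $x$ (the analogues of Lemmas~\ref{lem:micro}--\ref{lem:2ndterm} in Section~\ref{sec:convdens}). You reverse this order: prove pointwise convergence first, verify that both $\tilde{\rho}_{N}$ and $\rho$ have total mass $1$ on $[-1,1]$ (the computation $\int_{-1}^{1}\rho\,dx = 1$ via $u = |x|^{1/m}$ is correct), and invoke Scheffé to upgrade a.e.\ pointwise convergence of probability densities to $L^{1}$ and hence weak convergence. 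That is a valid and clean alternative route which does not appear in the paper, and it has the merit of making the weak statement a free corollary once the pointwise one is in hand.

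However, the proposal as written has a real gap: the pointwise analysis is the technically heavy part and you only sketch it. You say ``a standard Laplace analysis picks out a unique interior saddle,'' yet you yourself identify the two issues --- degeneration of the saddle as $x\to 0$ (for $m\geq 2$) and at the edge $|x|\to\tilde{\alpha}^{m/2}$ --- and do not resolve them. Precisely because of these singularities the paper must prove the uniform error bounds of Propositions~\ref{prop:uniformweight}--\ref{prop:fint} (not merely the leading term as in \cite{ABKN14}) and must exclude a microscopic layer near the origin and a thin strip near the hyperbola $xy=\alpha^{m}$ before the Laplace method applies; without those estimates the claimed $\sqrt{N}A(x)+o(\sqrt{N})$ expansion does not follow from ``standard'' considerations. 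A secondary slip: the concern about ``complex Pfaffian corrections contaminating the real-real diagonal'' is misplaced; by Theorem~\ref{thm:FIKtheorem} the density of real eigenvalues is exactly $\rho_{N}(x)=S(x,x)$, which is a self-contained integral over the real line with no input from the complex sector. So the Scheffé reduction is a genuine improvement in organisation, but the pointwise saddle analysis needs to be carried out with the uniform estimates of Section~\ref{sec:laplace} before either conclusion of the theorem is established.
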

Theorems \ref{th:mainthm} and \ref{th:densthm} prove two conjectures of Forrester, Ipsen and Kumar
\cite[Conjectures 4.1 and 4.2]{FIK20} and generalise the $m=1$ asymptotic results of \cite{KSZ10}. Theorem \ref{th:mainthm} goes further by providing a precise error bound on the remainder, see Remark \ref{rem:remainder} for further discussion on this point. Results of the type \eqref{mainest}, showing that on average there are of order $\sqrt{N}$ real eigenvalues go back to the work of Edelman et al. \cite{EKS94} in the case of a single real Ginibre matrix. Owing to the mentioned developments linking products of real random matrices to Pfaffian point processes in \cite{IK14, FI16}, this was extended to products of independent real Ginibre matrices in \cite{S17}. The `$\sqrt{N}$-law' is expected to hold for a broad class of real random matrices \cite{MPT16}. On this point, for $m=1$ Tao and Vu \cite{TV15} used moment matching methods to extend the $\sqrt{N}$ estimate in \cite{EKS94} to a class of i.i.d. real random matrices. The fluctuations of $N^{(1)}_{\mathbb{R}}$ about the $\sqrt{N}$-law have been investigated and are known to be Gaussian for the real Ginibre ensemble \cite{S17-2, K15}.

The limiting density \eqref{rhodef} has a simple description in terms of its single matrix $m=1$ version. If $A$ is the random variable whose density is given by the $m=1$ case of \eqref{rhodef}, then the symmetrised power $A^{m}B$ gives the density \eqref{rhodef} for any $m \geq 1$, where $B$ is an independent Bernoulli random variable on $\{-1,1\}$. This type of result is familiar in the study of free probability \cite{BNS12} which is very effective at computing the complex spectrum of a product matrix like $X^{(m)}$ in terms of the spectra of the individual factors. However, methods of free probability do not seem to be applicable to the problems solved in this paper regarding real eigenvalues. The main technical challenges in the proofs of Theorems~\ref{th:mainthm} and~\ref{th:densthm} arise in the evaluation 
of integrals \eqref{densiln} and \eqref{iln} in Corollary~\ref{cor:denint}, whose asymptotics are not uniform due to singularities of the integrands near the origin. This problem is common when dealing with products of random matrices and is discussed in more detail in Section~\ref{sec:preest}. It is likely that the technique we develop is useful for other problems involving products of random matrices. For example, our method applies equally well to real Ginibre matrices and this is sketched out in Appendix \ref{sec:realgin}. 

Finally we consider the regime of weak non-orthogonality, defined by fixing $L$ and letting $N \to \infty$. In this case one is only truncating a finite number of rows and columns, and each matrix in the product is very close to an orthogonal matrix. The $\sqrt{N}$-law discussed above does not hold for these matrices. We find
\begin{theorem}
\label{thm:weakregime}
Let $L$ and $m$ be fixed positive integers. Then as $N \to \infty$ we have
\begin{equation}
\mathbb{E}(N^{(m)}_{\mathbb{R}}) \sim \frac{1}{B\left(\frac{mL}{2},\frac{1}{2}\right)}\,\log(N), \label{almostorthogresult}
\end{equation}
where $B(a,b) = \frac{\Gamma(a)\Gamma(b)}{\Gamma(a+b)}$ is the beta function.
\end{theorem}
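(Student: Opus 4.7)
My proposed strategy starts from the integral representation of $\mathbb{E}(N^{(m)}_{\mathbb{R}})$ provided by Corollary~\ref{cor:denint} via equations~\eqref{densiln}--\eqref{iln}. The $\log N$ scaling is already anticipated by the formal limit $\tilde\gamma \to 0$ of Theorem~\ref{th:densthm}: the support of the limiting density~\eqref{rhodef} fills $(-1,1)$, and the normalising factor $1/\mathrm{arctanh}(\sqrt{\tilde\alpha})$ diverges, precisely the signature of a logarithmic growth in the total count. Consequently, for $L$ fixed I expect the density $\rho_N(x)$ to admit a pointwise limit $\rho_\infty(x) \propto |x|^{-1+1/m}(1-|x|^{2/m})^{-1}$ on $(-1,1)\setminus\{0\}$, with $\int \rho_\infty$ divergent near $|x|=1$, the divergence being cut off on a scale of order $1/N$ by the truncation weight in the Pfaffian kernel.

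The first step would be a pointwise-in-$x$ asymptotic analysis of the Meijer G-type integrand in~\eqref{iln}. For $L$ fixed, the $N$-dependence is concentrated in a sharply decaying weight, and Laplace's method should yield the pointwise limit
\begin{equation*}
\rho_\infty(x) = \frac{1}{m\,B(mL/2, 1/2)}\,\frac{1}{|x|^{1-1/m}(1-|x|^{2/m})}, \qquad x \in (-1,1)\setminus\{0\},
\end{equation*}
the Beta-function constant arising from the identity $\int_0^1 s^{-1/2}(1-s)^{mL/2 - 1}\,ds = B(mL/2, 1/2)$ after residue-style manipulations of the Meijer G-kernel. For $m = 1$ this reduces to the KSZ result~\cite{KSZ10}, providing a sanity check, while the general $m$ case reflects the fact that the $m$-fold product structure convolves the single-matrix Beta weights with index $L/2$ into a combined index $mL/2$. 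I would complement the pointwise limit with a locally uniform bound on compact subsets of $(-1,1)\setminus\{0\}$, to justify passage to the integral in the next step.

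The second step is to convert pointwise convergence into the log-asymptotic via a bulk/edge split
\begin{equation*}
\mathbb{E}(N^{(m)}_{\mathbb{R}}) = \int_{|x| \leq 1 - 1/N} \rho_N(x)\,dx + \int_{1 - 1/N < |x| < 1} \rho_N(x)\,dx.
\end{equation*}
The substitution $u = |x|^{1/m}$, together with $\mathrm{arctanh}(1 - 1/(mN)) \sim \tfrac{1}{2}\log N$, gives
\begin{equation*}
\int_{-1+1/N}^{1-1/N} \rho_\infty(x)\,dx \sim \frac{\log N}{B(mL/2, 1/2)},
\end{equation*}
which combined with dominated convergence on the bulk supplies the claimed leading behaviour. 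The edge integral is then shown to be $O(1)$ by rescaling $1 - |x| = u/N$ and exploiting the sharp concentration of the truncation weight on this scale.

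The principal obstacle will be Step~1: isolating the dominant contribution to~\eqref{iln} with $L$ fixed, while retaining uniform control in a shrinking neighbourhood of $|x|=1$. This is the edge analogue of the non-uniformity at the origin that the authors highlight in Section~\ref{sec:preest}, and the delicate balance between the exponentially decaying truncation weight and the Beta-type prefactor has to be made explicit to extract the precise constant $1/B(mL/2, 1/2)$. I anticipate that a careful local change of variables of the type developed in the proof of Theorem~\ref{th:densthm}, together with standard asymptotic expansions of the Beta function, will suffice to close the argument.
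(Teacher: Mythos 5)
Your strategy is genuinely different from the paper's. The paper does not work with the density integral \eqref{iln} at all in the weak-non-orthogonality regime; it instead starts from the alternative exact formula \eqref{fikform}, which writes $\mathbb{E}(N^{(m)}_{\mathbb{R}})$ as an \emph{alternating} sum $\sum_j (-1)^j \binom{L+j}{L}^m g_j$ over Meijer--$G$ coefficients. The $\log N$ then emerges from pairwise cancellation: the leading large-$j$ behaviour of $g_{2j}$ and $g_{2j+1}$ coincides and cancels across adjacent terms, while the next-order $\pm A_{L,m}/j$ corrections add and produce a harmonic tail $\sim\sum 1/j$. No boundary-layer analysis of the density near $|x|=1$ is needed.

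However, your proposal contains a genuine error in the central step. You assert the pointwise limit
\begin{equation*}
\rho_\infty(x) = \frac{1}{m\,B\!\left(\tfrac{mL}{2},\tfrac{1}{2}\right)}\,\frac{1}{|x|^{1-\frac{1}{m}}\left(1-|x|^{\frac{2}{m}}\right)},
\end{equation*}
citing the $\tilde\gamma\to 0$ limit of Theorem~\ref{th:densthm} as motivation. But the $|x|^{-1+\frac{1}{m}}$ singularity near the origin is a feature of the \emph{strong} regime $L\to\infty$: it comes from Proposition~\ref{prop:uniformweight}, which gives $w_L(x)\sim d_{L,m}|x|^{\frac{1}{m}-1}(1-|x|^{\frac{2}{m}})^{\frac{mL}{2}-1}$ only on $|x|\geq M L^{-\frac{m}{2}}$ and with relative error $O((|x|^{2/m}L)^{-1})$ --- both of which become vacuous when $L$ is fixed. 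For fixed $L$ and $m\geq 2$, the weight $w_L(x)$ (the density of a product of $m$ independent symmetric Beta variables) behaves like $\left(\log\tfrac{1}{|x|}\right)^{m-1}$ near $x=0$, not like a power law. For instance, with $m=2$, $L=2$ one has exactly $w_2(x)=-\log|x|$, so $\rho_N(0^+)$ converges to a constant multiple of $-\log|x|\cdot\int_{-1}^1 |y|\,w_2(y)\,dy$, which is incompatible with the $|x|^{-1/2}$ divergence in your formula. Extrapolating the normalised strong-regime density to the weak regime is therefore invalid; the shape near the origin genuinely changes.

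This does not mean the final answer is wrong --- the $\log N$ growth is driven entirely by $|x|\to 1$, where (for fixed $L$) one does expect $\rho_\infty(x)\sim\tfrac{1}{2B(mL/2,1/2)}\,(1-|x|)^{-1}$ because $w_L(x)\sim c\,(1-|x|)^{\frac{mL}{2}-1}$ and $f_{\infty,L}(x)\sim c'\,(1-x)^{-mL-1}$ near $1$. A repaired version of your argument would have to (i) establish this edge asymptotic with the correct constant directly, rather than borrowing the global formula from Theorem~\ref{th:densthm}, and (ii) supply a uniform estimate in the shrinking window $1-|x|\lesssim N^{-1}$ to justify your cutoff. These are precisely the technical obstacles the paper circumvents by switching to the alternating-sum representation, and they are not addressed in your proposal (the phrase ``residue-style manipulations of the Meijer~G-kernel'' is not a derivation, and the Beta identity you quote does not by itself produce the constant from the double integral~\eqref{iln}).
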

The case $m=1$ of Theorem \ref{thm:weakregime} was obtained in the work of Khoruzhenko, Sommers and \.{Z}yczkowski \cite{KSZ10}. We are not aware of the general $m$ case given by Theorem \ref{thm:weakregime} appearing in the literature, conjecturally or otherwise.

This paper is structured as follows. In Section \ref{sec:strat} we begin by recalling the results of \cite{FIK20} regarding the Pfaffian structure associated with products of truncated orthogonal random matrices. Then we give an overview of the proof of Theorem \ref{th:mainthm}, postponing the full details until Section \ref{sec:proofs}. Then in Section \ref{sec:convdens} we give the proof of Theorem \ref{th:densthm} based on the results obtained in Section \ref{sec:proofs}. In Section \ref{sec:weakregime} we prove Theorem \ref{thm:weakregime}. Section \ref{sec:laplace} is devoted specifically to the asymptotic analysis of multiple integrals of Laplace type that are needed throughout Sections \ref{sec:strat}, \ref{sec:proofs} and \ref{sec:convdens}. Finally, Appendix \ref{sec:realgin} includes a discussion about how the approach of the present paper is easily adapted to the case of products of real Ginibre matrices.

\section*{Acknowledgements}
A. L. would like to gratefully acknowledge the support of the UK Engineering and Physical Sciences Research
Council (EPSRC) DTP (grant number EP/N509619/1). F. M. is
grateful for support from a University Research Fellowship of the University of Bristol. N. S. gratefully acknowledges support of the Royal Society University Research Fellowship `Random matrix theory and log-correlated Gaussian fields', reference URF\textbackslash R1\textbackslash180707.

\newpage

\section{Strategy of the proof and leading order asymptotics}
\label{sec:strat}
The goal of this section is to discuss the key steps in the proof of Theorem \ref{th:mainthm} and give a quick derivation of the leading asymptotics in \eqref{mainest}. The starting point of the proof is the following fact, first established by Ipsen and Kieburg \cite{IK14}, that the real eigenvalues of the product matrix $X^{(m)} = \tilde{O}_{1} \ldots \tilde{O}_{m}$ form a Pfaffian point process. In the recent work \cite{FIK20} this has been simplified and explicit formulae for the correlation kernel of the Pfaffian point process were identified.
\begin{theorem}[Forrester, Ipsen and Kumar \cite{FIK20}]
\label{thm:FIKtheorem}
Define the function
\begin{equation}
S(x_1,x_2) := \int_{-1}^{1}dy\,(x_{1}-y)\mathrm{sgn}(x_{2}-y)w_{L}(x_{1})w_{L}(y)f_{N-2,L}(x_{1}y), \label{sfunc}
\end{equation}
where the weight function is
\begin{equation}
w_{L}(x) = \left(\frac{L}{2B(\frac{L}{2},\frac{1}{2})}\right)^{\frac{m}{2}}\int_{[-1,1]^{m}}\delta(x-y_{1}\ldots y_{m})\,\prod_{i=1}^{m}(1-y_{i}^{2})^{\frac{L}{2}-1}\,dy_{i}, \label{weight}
\end{equation}
and
\begin{equation}
f_{N-2,L}(x) = \sum_{n=0}^{N-2}\binom{L+n}{n}^{m}x^{n}. \label{fnl}
\end{equation}
Then the real eigenvalues of $X^{(m)}$ form a Pfaffian point process with correlation functions given by 
\begin{equation}
\rho_{(k)}(x_1,\ldots,x_k) = \mathrm{Pf}\left[K(x_i,x_j)\right]_{i,j=1}^{k},
\end{equation}
where the correlation kernel is of the form
\begin{equation}
K(x_i,x_j) = \begin{pmatrix} D(x_{i}, x_{j}) & S(x_{i}, x_{j})\\ -S(x_{j},x_{i}) & \tilde{I}(x_{i},x_{j}) \end{pmatrix}.\label{matrixkernel}
\end{equation}
In \eqref{matrixkernel}, $D$ and $\tilde{I}$ are certain anti-symmetric functions of $x_{i}$ and $x_{j}$ that are not required here; see \cite{FIK20} for further details.
\end{theorem}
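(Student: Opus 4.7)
The plan is to derive the joint probability density function (JPDF) of the eigenvalues of $X^{(m)}$ and then extract the matrix kernel \eqref{matrixkernel} by identifying a suitable family of skew-orthogonal polynomials. Since $X^{(m)}$ is a real matrix, its spectrum splits into purely real eigenvalues and complex conjugate pairs, so the first task is to perform a real generalised Schur decomposition simultaneously on all $m$ factors and integrate out the orthogonal frames via the Haar measure. This kind of decomposition is standard for a single truncated orthogonal matrix \cite{KSZ10}, and for a product one proceeds either by induction on $m$ or by inserting delta functions to enforce the product structure, performing the integrations over intermediate orthogonal groups one at a time.

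The output of the previous step is a JPDF built from a Vandermonde factor together with a weight function attached to each eigenvalue, where the weight carries the memory of all $m$ factors. The $m$-fold convolution-type integral defining $w_{L}(x)$ in \eqref{weight} is exactly what one expects to appear: it is the multiplicative convolution of $m$ copies of the single-matrix squared-singular-value density of a truncation, and it is what the inductive integration naturally produces. Signs of eigenvalue differences and indicator functions needed for the complex pairs will combine with the Vandermonde factor to produce a structure to which de Bruijn's identity can be applied, yielding a Pfaffian representation for the partition function and for all correlation functions.

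The next step is to produce the kernel in closed form by constructing skew-orthogonal polynomials $\{p_{2k}, p_{2k+1}\}$ with respect to the skew inner product induced by $w_{L}$ and $\operatorname{sgn}(x_{2}-y)$. Once these are in hand, the standard formulas for Pfaffian point processes express $S(x_{1},x_{2})$ as the Christoffel--Darboux type sum $\sum_{k=0}^{N/2-1} r_{k}^{-1}\bigl(p_{2k}(x_{1})\Phi_{2k+1}(x_{2})-p_{2k+1}(x_{1})\Phi_{2k}(x_{2})\bigr)$ integrated against $w_{L}$, where $\Phi_{j}$ are the skew-duals and $r_{k}$ the skew norms. The task is to match this expression with \eqref{sfunc}, whose pre-kernel $f_{N-2,L}(x_{1}y)$ has the simple binomial form \eqref{fnl}. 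The functions $D$ and $\tilde{I}$ are then obtained from $S$ by standard differentiation and double integration respectively.

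The principal obstacle is the explicit identification of the skew-orthogonal polynomials for a weight defined as an $m$-fold multiplicative convolution; for general $m$ this weight is naturally expressed in terms of Meijer $G$-functions, so a direct orthogonalisation is unattractive. The cleanest route, and the one used by Forrester, Ipsen and Kumar, is to exploit the multiplicative structure of the problem: one writes the skew inner product $\langle x^{i}, x^{j}\rangle_{s}$ against $w_{L}$ as an iterated integral, splits it into contributions of each factor, and reduces the moment computation to the $m=1$ Jacobi-type calculation of \cite{KSZ10}. In this way the binomial skew-norms $\binom{L+n}{n}$ of the one-matrix case appear raised to the $m$-th power, producing the coefficients $\binom{L+n}{n}^{m}$ in \eqref{fnl}, and the skew-orthogonal polynomials themselves turn out to be independent of $m$ up to these normalising constants, which is what makes the final answer as simple as \eqref{sfunc}.
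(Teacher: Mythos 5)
This theorem is not proved in the present paper. It is stated as an imported result, attributed to Forrester, Ipsen and Kumar \cite{FIK20}, who in turn build on the Pfaffian structure first established for products of truncated orthogonal matrices by Ipsen and Kieburg \cite{IK14} and the single-matrix ($m=1$) kernel of Khoruzhenko, Sommers and \.{Z}yczkowski \cite{KSZ10}. Everything that follows in the paper --- Corollary~\ref{cor:denint}, the preliminary estimates, and the asymptotic analysis of Sections~\ref{sec:proofs}--\ref{sec:laplace} --- treats Theorem~\ref{thm:FIKtheorem} as a black box and works directly with the explicit formulae \eqref{sfunc}, \eqref{weight} and \eqref{fnl}. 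Consequently your proposal cannot be compared against a proof contained in the paper, because there is none.

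As a high-level reconstruction of the route taken in \cite{IK14} and \cite{FIK20}, your sketch is plausible in outline: a generalised real Schur decomposition of each factor, integration over the intermediate orthogonal frames to obtain a joint density built from sign factors and a weight given by the $m$-fold multiplicative convolution \eqref{weight}, a de Bruijn-type reduction to a Pfaffian, and then skew-orthogonal polynomials for the resulting skew inner product with the moment computation factoring through the $m=1$ Jacobi-type calculation to produce the coefficients $\binom{L+n}{n}^{m}$ in \eqref{fnl}. Whether the finer claims you make (that the skew-orthogonal polynomials are $m$-independent up to normalisation, that $D$ and $\tilde I$ follow from $S$ by differentiation and double integration, and that the moment factorisation is as clean as you state) are correct cannot be verified from the text of this paper; those details would need to be checked directly against \cite{IK14} and \cite{FIK20}.
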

In particular, we have
\begin{corollary}
\label{cor:denint}
The one-point density of real eigenvalues $\rho_{N}(x)$ in \eqref{eigdensdef} is obtained by taking $x_{1}=x_{2}=x$ in \eqref{sfunc}, so that $\rho_{N}(x) = S(x,x)$ and therefore
\begin{equation}
\rho_{N}(x) = \int_{-1}^{1}dy\,|x-y|\,w_{L}(x)w_{L}(y)f_{N-2,L}(xy). \label{densiln}
\end{equation}
Furthermore, the expected number of real eigenvalues of $X^{(m)}$ is the total integral of $\rho_{N}(x)$, given by
\begin{equation}
\mathbb{E}(N^{(m)}_{\mathbb{R}}) = 2\int_{0}^{1}dx\int_{-1}^{1}dy\,|x-y|\,w_{L}(x)w_{L}(y)f_{N-2,L}(xy), \label{iln}
\end{equation}
where we used that $\rho_{N}(x)$ is an even function of $x$.
\end{corollary}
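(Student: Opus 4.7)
The plan is to derive Corollary \ref{cor:denint} directly from the Pfaffian structure provided by Theorem \ref{thm:FIKtheorem}. By definition of a Pfaffian point process with $2\times 2$ matrix kernel, the one-point correlation function is
\[
\rho_{(1)}(x) = \mathrm{Pf}\,K(x,x) = \mathrm{Pf}\begin{pmatrix} D(x,x) & S(x,x) \\ -S(x,x) & \tilde{I}(x,x) \end{pmatrix}.
\]
Since Theorem \ref{thm:FIKtheorem} records that $D$ and $\tilde{I}$ are antisymmetric in their two arguments, we have $D(x,x) = \tilde{I}(x,x) = 0$, so the $2\times 2$ matrix above reduces to a canonical antisymmetric matrix whose Pfaffian is simply $S(x,x)$. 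Hence $\rho_{N}(x) = S(x,x)$, and inserting $x_{1}=x_{2}=x$ into \eqref{sfunc} and using the identity $(x-y)\,\mathrm{sgn}(x-y) = |x-y|$ produces the formula \eqref{densiln}.

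For the expected number of real eigenvalues, I would integrate $\rho_{N}(x)$ over the real line. Note that $w_{L}$ vanishes outside $[-1,1]$, as is clear from \eqref{weight} (each $y_{i}$ is integrated over $[-1,1]$, forcing the product to lie in $[-1,1]$), so the domain of integration collapses to $[-1,1]$. The remaining task is to establish the symmetry $\rho_{N}(-x) = \rho_{N}(x)$ so that the integral can be written as twice the integral over $[0,1]$. The substitution $y \mapsto -y$ in \eqref{densiln} gives
\[
\rho_{N}(-x) = \int_{-1}^{1}dy\,|x-y|\,w_{L}(-x)\,w_{L}(-y)\,f_{N-2,L}(xy),
\]
so it suffices to check that $w_{L}$ is an even function. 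This follows at once from \eqref{weight}: flipping the sign of any single factor $y_{i}$ in the $m$-fold integral flips the sign of the product $y_{1}\cdots y_{m}$ while leaving the measure $\prod_{i}(1-y_{i}^{2})^{L/2-1}dy_{i}$ invariant, so $w_{L}(-x) = w_{L}(x)$. Combining these two observations produces \eqref{iln}.

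There is no real obstacle in this corollary: everything is a routine unpacking of Theorem \ref{thm:FIKtheorem}. The only point requiring verification is the parity of $w_{L}$, and as indicated this is immediate from the explicit $m$-fold integral representation \eqref{weight}. The genuine analytic work begins only at the next stage, when one has to extract the $N \to \infty$ asymptotics of the double integral \eqref{iln} through Laplace-type analysis of the singular kernel $f_{N-2,L}$.
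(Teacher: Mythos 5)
Your proof is correct and follows exactly the route the paper intends: the paper itself gives no separate argument for Corollary~\ref{cor:denint} beyond the assertion that it is a consequence of Theorem~\ref{thm:FIKtheorem}, and what you have written is precisely the omitted unpacking. The reduction of the Pfaffian $\mathrm{Pf}\,K(x,x)$ to $S(x,x)$ via antisymmetry of $D$ and $\tilde{I}$, the identity $(x-y)\,\mathrm{sgn}(x-y)=|x-y|$, the observation that $w_{L}$ is supported on $[-1,1]$, and the verification that $w_{L}$ (and hence $\rho_{N}$) is even via the sign flip $y_{i}\mapsto -y_{i}$ in \eqref{weight} are all the steps required, and each is carried out correctly.
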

\begin{remark}
It is well known that \eqref{weight} has a probabilistic interpretation in terms of products of independent \emph{scalar} random variables. Namely, up to a constant of proportionality, $w_{L}(x)$ is equal to the probability density function for a product of $m$ independent $\mathrm{Beta}\left(\frac{L}{2},\frac{L}{2}\right)$ random variables. Functions of the form \eqref{weight} can also be represented in terms of \emph{Meijer G-functions}, see \cite{ABKN14, FIK20} for further details.
\end{remark}

\subsection{Preliminary estimates}
\label{sec:preest}
To prove Theorems \ref{th:mainthm} and \ref{th:densthm}, the idea is to obtain asymptotics of the functions $w_{L}(x)$ and $f_{N-2,L}(xy)$ separately, and then insert the results into the explicit formulae \eqref{densiln} and \eqref{iln}. Although this would appear to be a straightforward approach, particular care must be taken with the estimates due to parts of the integral \eqref{iln} where the required asymptotics are not uniform. In particular, when $m>1$ both $w_{L}(x)$ and $f_{N-2,L}(xy)$ have singular terms in their asymptotics near the origin, while for $m \geq 1$ the function $f_{N-2,L}(xy)$ undergoes a transition in its asymptotic behaviour near the hyperbola $xy = \alpha^{m}$. 

The purpose of this subsection is simply to detail our asymptotic results for $w_{L}(x)$ and $f_{N-2,L}(xy)$ that are precise enough to prove Theorems \ref{th:mainthm} and \ref{th:densthm}.
\begin{remark}
Throughout the paper, quantities $c$ and $C$ will always denote absolute positive constants that only depend on fixed parameters such as $m$ or $\kappa$ (see below) and do not depend on $N$, $L$ or any variables of integration. Furthermore we deem the precise value of these constants as unimportant and caution that their value may change from line to line.
\end{remark}
\begin{proposition}
\label{prop:uniformweight}
Fix a small constant $\kappa>0$ and a large constant $M>0$. Then the weight function satisfies the following estimate uniformly on $|x| \in [M\,L^{-\frac{m}{2}},1-\kappa]$,
	\begin{equation}
	w_{L}(x) =d_{L,m}(1-|x|^{2/m})^{\frac{mL}{2}-1}|x|^{\frac{1}{m}-1}\left(1+O\left(\frac{1}{|x|^{\frac{2}{m}}L}\right)\right), \qquad L \to \infty, \label{uniformweight}
	\end{equation}
	where $d_{L,m}$ is defined by
\begin{equation}
d_{L,m} = \frac{1}{2}\sqrt{\frac{L}{\pi m}}\,\left(\frac{2\pi}{B(\frac{L}{2},\frac{1}{2})}\right)^{\frac{m}{2}} = \frac{1}{2}\sqrt{\frac{L}{\pi m}}\,(2\pi L)^{\frac{m}{4}}\left(1+O\left(\frac{1}{\sqrt{L}}\right)\right), \qquad L \to \infty. \label{dLm}
\end{equation}
	Furthermore, for $|x| \in [M\,L^{-\frac{m}{2}},1]$ we have the crude estimate
	\begin{equation}
	w_{L}(x) \leq N^{c}(1-|x|^{\frac{2}{m}})^{\frac{mL}{2}}. \label{crudeweight}
	\end{equation}
\end{proposition}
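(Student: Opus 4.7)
The plan is to apply Laplace's method directly to the defining $m$-fold integral \eqref{weight}, after some preliminary reductions. First I would use the evenness of $w_L$ to reduce to $x > 0$ and decompose $[-1,1]^m$ into its $2^{m-1}$ sign patterns with product $+1$; this reduces $w_L$ to a constant multiple of an integral $I(x)$ over $[0,1]^m$. Using the delta function to eliminate $y_m = x/(y_1\cdots y_{m-1})$ produces an $(m-1)$-dimensional integral with a singular prefactor $1/(y_1\cdots y_{m-1})$.

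To set up Laplace, I would pass to the logarithmic coordinates $v_i := \log(y_i/x^{1/m})$ for $1\le i\le m-1$, setting $v_m := -\sum_{i<m} v_i$, which are centred at the symmetric saddle point $y_i = x^{1/m}$. The Jacobian $\prod y_i$ cancels the singular prefactor exactly, and the integral takes the Laplace form $\int_D \exp((L/2-1)F(v))\,dv$ with $F(v) := \sum_{i=1}^m h(v_i)$ and $h(v) := \log(1-x^{2/m}e^{2v})$. A direct calculation gives $\nabla F(0) = 0$ and $\mathrm{Hess}\,F(0) = h''(0)(I_{m-1} + J_{m-1})$, where $J_{m-1}$ is the all-ones matrix; the eigenvalues $\{m,1,\dots,1\}$ of $I_{m-1}+J_{m-1}$ account for the factor $\sqrt{m}$ appearing in $d_{L,m}$. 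Standard Gaussian asymptotics then yield the leading order $(1-x^{2/m})^{mL/2-1}|x|^{1/m-1}$, and matching constants against the prefactor $(L/(2B(L/2,1/2)))^{m/2}\cdot 2^{m-1}$ reproduces $d_{L,m}$ after a short algebraic simplification.

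The main obstacle is the uniformity down to $|x| = M L^{-m/2}$. As $x$ shrinks, the natural Gaussian width $\sigma \asymp (1-x^{2/m})/\sqrt{L x^{2/m}}$ grows, and the higher cumulants at the saddle become more delicate. Expanding $h$ to fourth order, the dominant correction ratio $h^{(4)}(0)/h''(0)^2$ behaves like $-(1-x^{2/m})^3/x^{2/m}$, producing a Laplace correction of relative size $O(1/(L|x|^{2/m}))$; the lower cut-off $|x| \ge M L^{-m/2}$ is precisely what keeps this small. I would carry this out by rescaling $v = \sigma w$ and applying a quantitative Laplace estimate uniform in $\mu := x^{2/m}/(1-x^{2/m})$, with the tails of the truncated domain $D$ controlled using that $|x|\le 1-\kappa$ keeps the saddle bounded away from the boundary $y_i = 1$, so $F$ has uniform negative definiteness outside a neighbourhood of $0$.

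For the crude bound \eqref{crudeweight}, AM-GM (or Lagrange multipliers) on the constraint $\prod y_i^2 = x^2$ gives the pointwise majorisation $\prod(1-y_i^2)^{L/2-1} \le (1-|x|^{2/m})^{m(L/2-1)}$. The remaining reduced integral carries only a mild $\int 1/(y_1\cdots y_{m-1})\,dy = O((\log(1/|x|))^{m-1}) = O((\log N)^{m-1})$ contribution, and the overall prefactor $(L/(2B(L/2,1/2)))^{m/2}$ is $O(N^{m/2})$, which together yield the stated $N^c$ polynomial bound.
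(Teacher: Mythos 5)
Your approach is sound and lands on the same basic strategy as the paper: eliminate the delta, identify $\eta = L|x|^{2/m}$ (equivalently your $\mu$) as the effective large parameter, apply Laplace at the symmetric saddle, and enforce $|x|\ge ML^{-m/2}$ to keep $\eta$ large. The one genuine difference is your choice of coordinates. The paper scales $y_j \to y_j x^{1/m}$, which puts the saddle at $\vec y = \vec 1$ but leaves the singular prefactor $h(\vec y) = (y_1\cdots y_{m-1})^{-1}$ in place; this forces a six-term decomposition $I_E = e^{K\phi(\vec 1)}(I_1 - I_2 + \cdots + I_6)$ in which $I_3$ and $I_4$ exist purely to control the Taylor expansion of $h$. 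Your logarithmic coordinates $v_i=\log(y_i/x^{1/m})$, $v_m = -\sum_{i<m}v_i$ absorb that prefactor into the Jacobian exactly, symmetrize the action as $F(v) = \sum_{i=1}^m h(v_i)$ with $h(v)=\log(1-x^{2/m}e^{2v})$, and make the Hessian $h''(0)(I_{m-1}+J_{m-1})$ and its determinant $m\,h''(0)^{m-1}$ immediate. That is a cleaner setup and would shrink the bookkeeping noticeably; I verified that the leading constant you obtain this way does recover $d_{L,m}$ and the exponents $(1-x^{2/m})^{mL/2-1}|x|^{1/m-1}$ exactly.

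Two small inaccuracies worth flagging, neither fatal. First, your stated correction ratio is off: with $\beta=x^{2/m}$ one computes $h''(0)=-4\beta/(1-\beta)^2$ and $h^{(4)}(0)=-16\beta(1+4\beta+\beta^2)/(1-\beta)^4$, so $h^{(4)}(0)/h''(0)^2 = -(1+4\beta+\beta^2)/\beta$, not $-(1-\beta)^3/\beta$; likewise $(h'''(0))^2/h''(0)^3 = -(1+\beta)^2/\beta$. Both are still $\Theta(1/\beta)$ uniformly on $\beta\in(0,1-\kappa']$, so the claimed relative error $O(1/(L|x|^{2/m}))$ survives, but the formula as written is wrong. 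Second, the AM-GM step for the crude bound yields the exponent $m(L/2-1) = mL/2 - m$ rather than $mL/2$; the discrepancy $(1-|x|^{2/m})^{-m}$ is harmless for $|x|\le 1-\kappa$ but is not obviously absorbed by the $N^c$ as $|x|\to 1$. The paper's own proof has precisely the same cosmetic gap, so you are in good company, but it would be worth either stating the crude bound with the exponent $m(L/2-1)$ or noting why the extra factor can be discarded in the regions where \eqref{crudeweight} is ultimately used.
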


To estimate $f_{N-2,L}(x)$ in \eqref{fnl} the idea will be to replace it with the infinite series
\begin{equation}
f_{\infty,L}(x) := \sum_{n=0}^{\infty}\binom{L+n}{n}^{m}x^{n}. \label{fintLdef}
\end{equation} 
Note that \eqref{fintLdef} converges absolutely inside the unit disc. We will show that the sum \eqref{fnl} naturally splits into two contributions, the first coming from $f_{\infty,L}(x)$ restricted to a specific interval, and the second an error term coming from the tail of the sum.
\begin{proposition}
\label{prop:trunco}
Under the hypotheses of Theorem \ref{th:mainthm} the following holds. We have the following estimate as $N \to \infty$, uniformly on $x \in [-1,1]\setminus((\alpha-\omega)^{m},(\alpha+\omega)^{m})$,
\begin{equation}
f_{N-2,L}(x) = f_{\infty,L}(x)\mathbbm{1}_{-(\alpha+\omega)^{m} < x < (\alpha-\omega)^{m}} + \frac{x^{N-1}}{x-\alpha^{m}}e_{N,m}\left(1+ O\left(\frac{1}{\sqrt{N}\omega}\right)\right), \label{tfintest}
\end{equation}
where $\omega$ may depend on $N$, and
\begin{equation}
e_{N,m} := \frac{\gamma^{-m\gamma N-\frac{m}{2}}(1+\gamma)^{mN(1+\gamma)-\frac{3m}{2}}}{(2\pi N)^{\frac{m}{2}}}. \label{enm}
\end{equation}
In practice we will take $\omega = N^{-\frac{1}{2}}$, such that the big-$O$ term in \eqref{tfintest} is $O(1)$.
\end{proposition}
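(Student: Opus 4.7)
The starting point is the exact decomposition
\[
f_{N-2,L}(x) = f_{\infty,L}(x) - T_{N}(x), \qquad T_{N}(x) := \sum_{n=N-1}^{\infty}\binom{L+n}{n}^{m}x^{n},
\]
and the task is to approximate $T_{N}(x)$ (or, in the outer window, $f_{N-2,L}(x)$ directly) by a closed form. The summand $a_{n}(x) := \binom{L+n}{n}^{m}x^{n}$ is unimodal in $n$, with maximum at $n^{\star}(x) = Lx^{1/m}/(1-x^{1/m})$, which crosses $N$ exactly when $|x| = \alpha^{m}$. For $x$ in the inner window $-(\alpha+\omega)^{m} < x < (\alpha-\omega)^{m}$ one has $n^{\star}(x) < N$, with $N - n^{\star}(x) = \Theta(N\omega)$, so $f_{\infty,L}(x)$ is of order one and $T_{N}(x)$ is a small, essentially geometrically decaying tail that must be identified with $-e_{N,m}\,x^{N-1}/(x-\alpha^{m})$. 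In the outer window $n^{\star}(x) > N$, both $f_{\infty,L}$ and $T_{N}$ are large and almost cancel, so it is cleaner to sum $f_{N-2,L}(x)$ itself \emph{backward} from its largest term $a_{N-2}(x)$. In either regime the problem reduces to a near-geometric summation.

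\textbf{Key estimate.} Writing $L = \gamma N$, $\alpha = 1/(1+\gamma)$ and $s := x/\alpha^{m}$, a Stirling expansion of $\log\binom{L+n}{n}^{m}$ together with a second-order Taylor expansion of $\log((L+n+1)/(n+1))$ about $n = N-1$ yields the uniform estimate, for $n = N-1+k$,
\[
a_{n}(x) = \binom{L+N-1}{N-1}^{m} x^{N-1}\, s^{k}\,\exp\!\left(-\frac{c\,k^{2}}{N} + O\!\left(\frac{k^{3}}{N^{2}}\right) + O\!\left(\frac{k}{N}\right)\right),
\]
with $c = m\gamma\alpha/2$, together with the prefactor identification $\binom{L+N-1}{N-1}^{m} = \alpha^{-m}\,e_{N,m}(1 + O(1/N))$. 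In the inner window $|s| \leq 1 - c'\omega$ for some $c' > 0$, and a Laplace-type comparison gives
\[
\sum_{k \geq 0} s^{k}\,e^{-ck^{2}/N + O(k^{3}/N^{2}) + O(k/N)} = \frac{1}{1-s}\bigl(1 + O(1/(N\omega^{2}))\bigr),
\]
obtained by Taylor-expanding the exponential and using $\sum s^{k} k = O((1-|s|)^{-2}) = O(\omega^{-2})$ and $\sum s^{k} k^{2} = O(\omega^{-3})$; dividing by the main term $1/(1-s) \sim 1/\omega$ gives the relative error $O(1/(N\omega^{2}))$. This produces $T_{N}(x) = -e_{N,m}\,x^{N-1}/(x-\alpha^{m})(1+O(1/(N\omega^{2})))$, and since $1/(N\omega^{2}) \leq 1/(\sqrt{N}\omega)$ whenever $\omega \geq N^{-1/2}$, it establishes \eqref{tfintest} with the indicator equal to $1$. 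The outer window is handled by the identical argument applied to the backward sum of $f_{N-2,L}(x)$ from $n = N-2$, whose reciprocal ratios $1/r_{n}(x)$ satisfy $|\alpha^{m}/x| \leq 1 - c'\omega$; combined with $\binom{L+N-2}{N-2}^{m} = e_{N,m}(1+O(1/N))$, this yields $f_{N-2,L}(x) = e_{N,m}\,x^{N-1}/(x-\alpha^{m})(1+O(1/(\sqrt{N}\omega)))$ directly.

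\textbf{Main obstacle.} The heart of the proof is the uniform Laplace-type comparison above: neither the quadratic factor $e^{-ck^{2}/N}$ nor the cubic remainder may simply be discarded, since the effective summation range $k \lesssim 1/\omega$ reaches $\sqrt{N}$ at the boundary scale $\omega = N^{-1/2}$. The standard remedy is to split the sum at a cutoff $K = N^{1/2+\varepsilon}$, use the Taylor expansion only for $k \leq K$, and dispose of the $k > K$ tail by a crude bound exploiting $|s| < 1$ and the monotone decay of $r_{n}(x)$ past $n^{\star}$. Negative $x$ requires only sign bookkeeping: in the inner window $s < 0$ with $|s| < 1$ merely improves absolute convergence, while in the outer window the alternating backward sum is bounded by its first term, so the same closed-form approximation applies after straightforward sign tracking.
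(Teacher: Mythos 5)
Your approach is genuinely different from the paper's: rather than writing each $\binom{L+n}{n}$ as a contour integral and performing an $m$-fold saddle-point analysis (the paper chooses a radius-$\alpha$ circle for the outer window and a large circle $C_R$ for the inner one, with the split into $f_{\infty,L}$ plus tail falling out of Cauchy's theorem and a pole-location argument, Lemma~\ref{resbound}), you Stirling-expand $\log a_n(x)$ directly in $n$ and reduce both the tail $T_N(x)$ and the backward sum of $f_{N-2,L}(x)$ to a Gaussian-damped near-geometric series. Your prefactor identification $\binom{L+N-1}{N-1}^m = \alpha^{-m}e_{N,m}(1+O(1/N))$ is correct, the curvature constant $c = m\gamma\alpha/2$ is right, the positive-$x$ inner and outer windows work out, and your stated error $O(1/(N\omega^2))$ is (for $\omega \geq N^{-1/2}$) at least as strong as the paper's $O(1/(\sqrt N\,\omega))$.

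However, there is a concrete gap in the negative-$x$ inner window. The proposition's inner window is the \emph{asymmetric} interval $-(\alpha+\omega)^m < x < (\alpha-\omega)^m$; for $x \in \bigl(-(\alpha+\omega)^m,\,-\alpha^m\bigr)$ one has $|x| > \alpha^m$, hence $|s| = |x|/\alpha^m > 1$. Your argument asserts that ``in the inner window $s<0$ with $|s|<1$ merely improves absolute convergence,'' but this is false on that sub-interval, and your Laplace-type comparison $\sum_{k\geq 0}s^k G(k) = \tfrac{1}{1-s}\bigl(1+O(\cdot)\bigr)$ was derived by expanding around the geometric series $\sum s^k$, which diverges when $|s|>1$. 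The Gaussian damping does make $\sum_k s^k G(k)$ converge, but its value is not obtained by the perturbative comparison you set up: for $|s|\in(1,\,1+O(\omega))$ the summand modulus first \emph{rises} (peaking at $k\sim N\omega$) before the Gaussian cuts it off, so that individual terms can be exponentially large in $N\omega^2$; the sum is close to $1/(1-s)$ only because of sign cancellation, and establishing this requires pairing consecutive terms (or an Abel/Poisson-summation argument), not expanding $e^{-ck^2/N}$ in $k^2/N$. This is precisely the regime where the paper's contour-integral method earns its keep: the asymmetric inner window arises there because the pole $x/(z_1\cdots z_{m-1})$ stays strictly inside the large circle $C_R$ for all $x$ down to $-(\alpha+\delta_0)^m$, a purely geometric fact that sidesteps the divergent-ratio series entirely. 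To complete your direct approach you would need an explicit cancellation argument for $s < -1$ with $|s|-1 = O(\omega)$; without it, the proof only covers $|s| \leq 1-c'\omega$, i.e.\ $x \in \bigl(-\alpha^m,\,(\alpha-\omega)^m\bigr)$ in the inner window, which is strictly smaller than what the proposition requires.
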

The function $f_{\infty,L}(x)$ can then be analysed separately in the limit $L \to \infty$.
\begin{proposition}
\label{prop:fint}
Fix a small constant $\kappa>0$ and a large constant $M>0$. Then the function $f_{\infty,L}(x)$ satisfies the following estimate as $L \to \infty$, uniformly on $x \in [ML^{-m},1-\kappa]$,
\begin{equation}
f_{\infty,L}(x) = (1-x^{\frac{1}{m}})^{-mL-1}\,L^{-\frac{m-1}{2}}\,\frac{1}{(2\pi)^{\frac{m-1}{2}}}\,\frac{1}{\sqrt{m}}\,x^{-\frac{m-1}{2m}}\,\left(1+O\left(\frac{1}{x^{\frac{1}{m}}L}\right)\right). \label{fintasytru}
\end{equation}
Furthermore, on the interval $x \in [ML^{-m},1]$ we have the crude estimate
\begin{equation}
f_{\infty,L}(x) \leq C(1-x^{\frac{1}{m}})^{-mL}, \label{crudefint}
\end{equation}
while for $x \in [-1,-ML^{-m}]$ we have
\begin{equation}
|f_{\infty,L}(x)| \leq C(1-(-x)^{\frac{1}{m}})^{-mL}e^{-cL(-x)^{\frac{1}{m}}}. \label{negxbnd}
\end{equation}
\end{proposition}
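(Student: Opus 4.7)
The plan is to apply Laplace's method to the series \eqref{fintLdef}. Setting $t = n/L$ and applying Stirling's formula to $\binom{L+n}{n} = \Gamma(L+n+1)/[\Gamma(L+1)\Gamma(n+1)]$, the general term can be written as
\begin{equation*}
\binom{L+n}{n}^{m}\!x^n = e^{L\phi(t;x)}\left(\frac{1+t}{2\pi L t}\right)^{m/2}\bigl(1 + O(L^{-1}+n^{-1})\bigr),
\end{equation*}
where $\phi(t;x) := m[(1+t)\log(1+t)-t\log t]+t\log x$. The saddle-point condition $\phi_t = 0$ has the unique positive solution $t_* = x^{1/m}/(1-x^{1/m})$, at which one computes $\phi(t_*;x) = -m\log(1-x^{1/m})$ and $\phi_{tt}(t_*;x) = -m(1-x^{1/m})^2/x^{1/m}$. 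Approximating the sum by the integral $L\int_0^\infty\!\cdots\,dt$ and carrying out the resulting Gaussian integral about $t_*$ reproduces, after collecting all prefactors, exactly the leading expression in \eqref{fintasytru}. The error $O(1/(x^{1/m}L))$ arises from the $O(1/n)$ Stirling correction evaluated at the saddle $n_* = Lt_* \asymp L x^{1/m}$.

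The main obstacle is uniformity in $x$ across $[ML^{-m},1-\kappa]$, because the saddle $n_*$ varies from $O(1)$ near the lower endpoint to order $L$ near the upper one. A direct calculation shows that the Gaussian width $\sigma := \sqrt{L/|\phi_{tt}(t_*;x)|}$ satisfies $n_*/\sigma = \sqrt{mLx^{1/m}}$, which is bounded below by $\sqrt{mM^{1/m}}$ throughout the stated range; this is what ultimately saves the Gaussian approximation even in the edge regime, since the saddle always lies at least $O(1)$ standard deviations from the boundary $n=0$. I would still split the analysis into two cases. In the bulk $x \in [L^{-m/2},1-\kappa]$, where $n_* \geq \sqrt{L}$, classical Laplace asymptotics apply and the sum-vs-integral discrepancy is controlled by Euler--Maclaurin. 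In the edge regime $x \in [ML^{-m},L^{-m/2}]$ I would rescale $n = n_* + s\sigma$, truncate to a wide Gaussian window using the log-concavity tail bound $a_n(x) \leq a_{n_*}(x)e^{-c(n-n_*)^2/\sigma^2}$ (a consequence of the log-convexity of $\Gamma$), and compare the truncated discrete sum with its Gaussian integral, absorbing all errors into the stated $O(1/(x^{1/m}L))$.

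The crude estimate \eqref{crudefint} follows from the same log-concavity bound: $\sum_n a_n(x) \leq C\sigma\,a_{n_*}(x)$, and the Stirling value of $a_{n_*}(x)$ is bounded by $(1-x^{1/m})^{-mL}$ times polynomial corrections in $L$ and $x$ that are absorbed into the constant $C$. For the negative-$x$ estimate \eqref{negxbnd}, the extra exponential factor $e^{-cL|x|^{1/m}}$ reflects cancellation between alternating terms. The mechanism is transparent at $m=1$, where the closed form $\sum_n \binom{L+n}{n}(-y)^n = (1+y)^{-L-1}$ combined with the elementary inequality
\begin{equation*}
(1+y)^{-L-1} \leq (1-y)^{-L-1}e^{-Ly}, \qquad y \in [0,1],
\end{equation*}
delivers \eqref{negxbnd} at once. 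For $m \geq 2$ I would use the Hadamard-product representation
\begin{equation*}
f_{\infty,L}(x) = \frac{1}{(2\pi i)^{m-1}}\oint\!\cdots\!\oint\frac{\prod_{j=1}^{m-1}(1-w_j)^{-L-1}}{\bigl(1-x/(w_1\cdots w_{m-1})\bigr)^{L+1}}\prod_{j}\frac{dw_j}{w_j}
\end{equation*}
on circles $|w_j| = r$ chosen with $r^{m-1}>|x|$: for $x = -y$ the innermost factor becomes $(1+y/\prod w_j)^{-L-1}$, of the $m=1$ form, so that after angular integration it contributes the extra exponential decay, while the $(1-w_j)^{-L-1}$ factors reproduce the positive-$x$ bound after optimizing $r$. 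Establishing that the angular cancellation actually propagates through the multi-dimensional integral with the correct $(-x)^{1/m}$ scaling will be the main technical point here.
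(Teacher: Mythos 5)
For the positive-$x$ asymptotics \eqref{fintasytru} and crude bound \eqref{crudefint} you take a genuinely different route from the paper: you apply the Laplace method directly to the discrete series $\sum_n \binom{L+n}{n}^m x^n$ (Stirling on each term, saddle $t_* = x^{1/m}/(1-x^{1/m})$, Gaussian integral), whereas the paper writes $f_{\infty,L}(x)$ as an $(m-1)$-fold contour integral (from the generating-function representation of the binomial coefficients), rescales $z_j \to x^{1/m}z_j$, deforms to the unit torus, and runs Laplace on the resulting angular integral. Your saddle location, the value $\phi(t_*) = -m\log(1-x^{1/m})$, the second derivative, and the resulting prefactor all check out and reproduce \eqref{fintasytru} exactly; and your observation that $n_*/\sigma = \sqrt{mLx^{1/m}} \gtrsim \sqrt{mM^{1/m}}$ is the same mechanism as the paper's effective parameter $\eta = Lx^{1/m}$, so both approaches see that the expansion is uniform down to $x \asymp L^{-m}$. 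The discrete-sum route is arguably more elementary for this part; what the contour route buys, as the paper makes explicit, is a single unified framework from which the crude bounds and the negative-$x$ bound fall out by bounding the integrand pointwise on the torus.

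The genuine gap is \eqref{negxbnd}. Your direct-series Laplace method does not apply to an alternating sum, and you recognize this: you invoke the $m=1$ closed form and then, for $m \geq 2$, fall back to the same style of contour representation the paper uses, but you explicitly leave the crucial step open (``establishing that the angular cancellation actually propagates through the multi-dimensional integral with the correct $(-x)^{1/m}$ scaling will be the main technical point here''). That is precisely the content of the proof, so as it stands the third claim is unproved. For reference, what makes the paper's contour route close cleanly is that after the rescaling $z_j \to (-x)^{1/m}z_j$ and deformation to the unit circle, the integrand's modulus is bounded by $(1-(-x)^{1/m})^{-mL}$ times $e^{-c(L+1)(-x)^{1/m}\tilde\psi(\vec\theta)}$, where $\tilde\psi(\vec\theta) = (\theta_1+\dots+\theta_{m-1}-(2j+1)\pi)^2 + \sum_k \theta_k^2$ on the $j$-th angular sheet; a direct minimization gives $\tilde\psi \geq \pi^2/m$ uniformly, which is the source of the exponential factor $e^{-cL(-x)^{1/m}}$. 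In your setup this is exactly the uniform lower bound you would need to extract after the angular integration, and the ``optimization over $r$'' you mention is superfluous: taking $r = (-x)^{1/m}$ directly (equivalently, rescaling first) is what makes the lower bound on the action clean. As written, \eqref{negxbnd} therefore remains a gap in your argument even though the high-level idea is sound.

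One smaller point: for \eqref{crudefint} your log-concavity bound $\sum_n a_n \leq C\sigma\, a_{n_*}$ produces, after Stirling, $C\,(1-x^{1/m})^{-mL-1}\,L^{-(m-1)/2}x^{-(m-1)/(2m)}$, which is $\leq C'(1-x^{1/m})^{-mL}$ on $[ML^{-m},\,1-\kappa]$ but acquires an extra $(1-x^{1/m})^{-1}$ as $x \to 1$; the paper's pointwise bound on the torus integrand has the same feature, so this is not a defect of your argument relative to the paper, but it is worth flagging that the constant in \eqref{crudefint} as stated cannot be uniform all the way up to $x=1$ without some additional care.
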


The proofs of Propositions \ref{prop:uniformweight}, \ref{prop:trunco} and \ref{prop:fint} are given in full in Section \ref{sec:laplace}. As we shall see, all the quantities involved have convenient integral representations and are then amenable to the \textit{Laplace} or \textit{saddle point method} for the asymptotic analysis of multiple integrals. Although this is relatively standard, there is a threshold beyond which the Laplace method breaks down, for example if $0 < x < L^{-\frac{m}{2}-\epsilon}$ for any $\epsilon>0$ in Proposition \ref{prop:uniformweight}. 
\begin{remark}
\label{rem:pointwise}
The decomposition \eqref{tfintest} was inspired by analogous results for the truncated exponential function given in \cite{BG07}. At the level of pointwise asymptotics, the leading terms in \eqref{uniformweight} and \eqref{fintasytru} were also derived in \cite{ABKN14}. However, our situation requires more precise estimates and uniform error bounds as given in the above three Propositions. This is due to the specific integral form in \eqref{sfunc} that is used to construct the correlation kernel for products of truncated orthogonal matrices.
\end{remark}

\subsection{Proof of Theorem \ref{th:mainthm}}
\label{sec:leading}
A straightforward check shows that after formally substituting the asymptotics \eqref{uniformweight} and \eqref{fintasytru} into \eqref{iln}, there is a critical point at $y=x$ in the positive quadrant. Then keeping $xy < (\alpha-N^{-\frac{1}{2}})^{m}$ in accordance with \eqref{tfintest}, our main results are obtained by a final saddle point computation.

To make this approach rigorous we need to remove some regions of the integration in \eqref{iln} where the mentioned asymptotics do not hold or otherwise give negligible contributions. These include a small microscopic layer of width $MN^{-\frac{m}{2}}$ near the origin, a small region near $\pm 1$ of fixed size $\kappa>0$ and the entire negative $y$ quadrant, see Lemmas \ref{lem:micro}, \ref{lem:edge} and \ref{lem:2ndterm} for precise statements. The last of these Lemmas also shows that we can always neglect the second term in \eqref{tfintest}. This results in the following approximation of \eqref{iln}:
\begin{lemma}
\label{lem:prequad}
For any $\kappa>0$ sufficiently small and $M>0$ a fixed large constant, we have $\mathbb{E}(N_{\mathbb{R}}) = I_{+} + O(1)$ as $N \to \infty$, where
\begin{equation}
\begin{split}
I_{+} = 2m^{2}\int_{N^{-1/2}M'}^{1-\kappa'}du\,\int_{N^{-1/2}M'}^{1-\kappa'}dv&\,|v^{m}-u^{m}|w_{L}(u^{m})w_{L}(v^{m})\\
&\times f_{\infty,L}((uv)^{m})\,(uv)^{m-1}\,\mathbbm{1}_{uv < \alpha - N^{-\frac{1}{2}}}, \label{prequadapprox}
\end{split}
\end{equation}
and $M' = M^{\frac{1}{m}}$, $1-\kappa' = (1-\kappa)^{\frac{1}{m}}$.
\end{lemma}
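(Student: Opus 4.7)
The plan is to start from formula \eqref{iln} and successively excise four subregions of the integration domain where either our asymptotic expansions break down or the integrand can be shown to contribute only $O(1)$, and then rewrite the remainder under the substitution $u = x^{1/m}$, $v = y^{1/m}$. The four regions are (i) the microscopic neighbourhoods $\{x < M N^{-m/2}\}$ and $\{|y| < M N^{-m/2}\}$ of the coordinate axes; (ii) the edge layer $\{x > 1-\kappa\} \cup \{|y| > 1-\kappa\}$; (iii) the negative-$y$ strip $\{y \in [-(1-\kappa), -M N^{-m/2}]\}$; and (iv) the transition strip $\{xy \geq (\alpha - N^{-1/2})^m\}$ across which Proposition \ref{prop:trunco} switches the form of $f_{N-2,L}$. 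The first three excisions are exactly the content of Lemmas \ref{lem:micro}, \ref{lem:edge} and \ref{lem:2ndterm}. Outside all four regions, Proposition \ref{prop:trunco} with $\omega = N^{-1/2}$ lets us replace $f_{N-2,L}(xy)$ by $f_{\infty,L}(xy)$ with an $O(N^{-1/2})$ relative error, which after integration against the other factors contributes only $O(1)$.

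For the microscopic layer I would combine the crude weight bound \eqref{crudeweight} with the observation that on $|xy| \lesssim L^{-m}$ only the first few terms of the series defining $f_{N-2,L}$ matter, so $f_{N-2,L}$ is bounded polynomially in $N$; the $O(N^{-m/2})$ width of the layer then beats this polynomial growth, even after accounting for $|x - y|$. For the edge layer, bounding $f_{N-2,L}(xy) \leq f_{N-2,L}(1)$ gives a quantity controlled by $e_{N,m}$ from \eqref{enm} (which grows only polynomially in $N$), while the weight factor $(1 - |x|^{2/m})^{mL/2}$ decays to at most $e^{-cL\kappa^{2/m}}$, producing an exponentially small contribution. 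For the negative-$y$ strip, the additional exponential factor $e^{-cL(-xy)^{1/m}}$ from \eqref{negxbnd} overrides the Laplace saddle that would otherwise sit at $u = v$ in the positive quadrant, reducing the contribution to $O(1)$. Finally, on the transition strip (iv) of width $O(N^{-1/2})$, crude bounds via \eqref{crudeweight} and \eqref{crudefint} together with the small area keep the contribution bounded, and the large but localised second summand of \eqref{tfintest} is absorbed into the same estimate through a direct bound against the weight (this is essentially the localisation argument behind Lemma \ref{lem:2ndterm}).

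Once these regions are removed and $f_{N-2,L}$ is replaced by $f_{\infty,L}$, the remaining integral is supported on $\{M N^{-m/2} < x < 1-\kappa\} \times \{M N^{-m/2} < y < 1-\kappa\} \cap \{xy < (\alpha - N^{-1/2})^m\}$. The substitution $x = u^m$, $y = v^m$ produces a Jacobian $m^2(uv)^{m-1}$, sends the endpoints to $M^{1/m} N^{-1/2}$ and $(1-\kappa)^{1/m}$, and replaces $|x - y|$ by $|u^m - v^m|$; collecting everything reproduces exactly \eqref{prequadapprox}. The main obstacle is the microscopic layer near the origin, where both Propositions \ref{prop:uniformweight} and \ref{prop:fint} genuinely fail, so one is forced to rely on crude polynomial bounds whose sharpness degrades as the cut-off scale $N^{-m/2}$ shrinks with growing $m$. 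A complementary subtlety is the hyperbola $xy = \alpha^m$, where the Laplace critical point $u = v$ can approach the transition line when $uv \to \sqrt{\alpha}$, and it is precisely the uniform error in Proposition \ref{prop:trunco} that rescues the reduction to $I_{+}$ inside the thin strip of width $N^{-1/2}$.
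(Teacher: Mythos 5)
Your overall strategy — excise four problematic regions, replace $f_{N-2,L}$ by $f_{\infty,L}$ via Proposition~\ref{prop:trunco}, and change variables $x=u^{m}$, $y=v^{m}$ — is the same as the paper's, and the regions you list are the right ones. However, there are two substantive misreadings that undermine your execution.

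First, you assign Lemma~\ref{lem:edge} to the edge layer $\{x>1-\kappa\}\cup\{|y|>1-\kappa\}$, but Lemma~\ref{lem:edge} is in fact about the hyperbola transition strip $E_{2}=\{(\alpha-N^{-1/2})^{m}<xy<(\alpha+N^{-1/2})^{m}\}$ — exactly your region (iv). The edge layer near $\pm 1$ is the set $E_{3}$ inside Lemma~\ref{lem:2ndterm}, and the negative-$y$ region is $E_{4}$ there too. More importantly, your proposed treatment of (iv) — crude bounds \eqref{crudeweight}, \eqref{crudefint} plus ``small area'' — is not sufficient. The crude estimate \eqref{crudeintegrand} only gives $N^{c}e^{-c'N(u-v)^{2}}$ with an \emph{unspecified} power $c$; integrating this over a strip of width $N^{-1/2}$ that intersects the diagonal $u=v$ (where the Gaussian factor is near $1$) can produce $N^{c-1}$, which is not $O(1)$ without controlling $c$. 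The paper handles this by using the \emph{sharp} asymptotics of Propositions~\ref{prop:uniformweight} and~\ref{prop:fint}, obtaining the precise prefactor $N^{3/2}$ and then performing an explicit Laplace evaluation of $I_{E_{2,2}}$ and $I_{E_{2,3}}$ around $u=\sqrt{\alpha_{-}}$. This is a genuine cancellation argument, not a crude bound.

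Second, your sentence ``Proposition~\ref{prop:trunco} with $\omega=N^{-1/2}$ lets us replace $f_{N-2,L}(xy)$ by $f_{\infty,L}(xy)$ with an $O(N^{-1/2})$ relative error'' misreads \eqref{tfintest}: the expansion is $f_{N-2,L}=f_{\infty,L}\mathbbm{1}_{\dots}+\tfrac{x^{N-1}}{x-\alpha^{m}}e_{N,m}(1+O(N^{-1/2}\omega^{-1}))$, so the error you can control relatively is on the \emph{second, additive} term, not on $f_{\infty,L}$ itself. That second term is not small in a naive sense — $e_{N,m}$ and $x^{N-1}$ individually grow and decay exponentially, and they balance at $x=\alpha^{m}$. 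Showing its total contribution is $O(1)$ (the quantity $J$ in Lemma~\ref{lem:2ndterm}) again requires a Laplace estimate localised around $u=v=\sqrt{\alpha}$, not a pointwise absorption into the weight as you suggest. If you correct both the lemma attribution and the treatment of (iv) and of the second summand, the rest of the argument goes through exactly as you outline.
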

Note that \eqref{prequadapprox} is the integral \eqref{iln} with the domain of integration restricted to a certain subset of the positive quadrant, within which we have made the change of variables $x = u^{m}$ and $y = v^{m}$. Throughout the paper we will frequently work in the $u$ and $v$ coordinates instead of $x$ and $y$ coordinates as this turns out to greatly simplify the presentation. 

The advantage of the approximation \eqref{prequadapprox} is that the asymptotics \eqref{uniformweight} and \eqref{fintasytru} are now applicable and have uniform error bounds. As we explained, we may expect the main contributions to come from a small $\delta$-neighbourhood of the line $v=u$ for some fixed $\delta>0$. This motivates the introduction of the \textit{critical region}:
\begin{equation}
A := \bigg\{(u,v) \in [0,1]^{2} : \{u > N^{-\frac{1}{2}}M'\} \wedge \{v > N^{-\frac{1}{2}}M'\} \wedge \{|u-v| < \delta\} \wedge \{uv < \alpha - N^{-\frac{1}{2}}\}\bigg\}. \label{critreg}
\end{equation}
Combined with Lemma \ref{lem:prequad}, the following Lemma immediately yields the proof of Theorem \ref{th:mainthm}. 
\begin{lemma}
\label{lem:critasympt}
Let $I_{+,A}$ denote the contribution of the integral \eqref{prequadapprox} from the set $A$ in \eqref{critreg}. Then $I_{+} = I_{+,A} + O(e^{-cN})$ and 
\begin{equation}
I_{+,A} = \sqrt{\frac{2m\gamma N}{\pi}}\,\mathrm{arctanh}(\sqrt{\alpha}) + O(1), \qquad N \to \infty. \label{o1error}
\end{equation}
\end{lemma}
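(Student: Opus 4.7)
My plan is a concentrated Laplace-type analysis of \eqref{prequadapprox} restricted to $A$, with the saddle lying along the diagonal $u=v$. The key algebraic observation is
\begin{equation*}
(1-u^2)(1-v^2) - (1-uv)^2 = -(u-v)^2,
\end{equation*}
which rewrites the product of the main exponential factors arising from Propositions \ref{prop:uniformweight} and \ref{prop:fint} as
\begin{equation*}
(1-u^2)^{\frac{mL}{2}-1}(1-v^2)^{\frac{mL}{2}-1}(1-uv)^{-mL-1} = \frac{1}{(1-u^2)(1-v^2)(1-uv)}\left(1-\frac{(u-v)^2}{(1-uv)^2}\right)^{\frac{mL}{2}}.
\end{equation*}
The last factor is Gaussian in $u-v$ with width of order $(1-uv)/\sqrt{mL}$, centred on the diagonal.

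To establish $I_+ = I_{+,A} + O(e^{-cN})$, I would apply this identity on the complement of $A$ inside the integration domain of $I_+$. There $|u-v|\geq\delta$ while $1-uv\leq 1$, so the Gaussian factor is dominated by $(1-\delta^2)^{mL/2}\leq e^{-cN}$; the remaining prefactors (the constants $d_{L,m}^2$, the factor $(uv)^{-(m-1)/2}$ controlled by the microscopic cutoff, and the subleading pieces from Propositions \ref{prop:uniformweight} and \ref{prop:fint}) are at most $N^{O(1)}$ and are absorbed.

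On $A$ I substitute the uniform asymptotics. The powers of $u$ and $v$ combine as $u^{1-m}v^{1-m}\cdot(uv)^{m-1}\cdot(uv)^{-(m-1)/2}=(uv)^{-(m-1)/2}$, and the overall constant simplifies to $2m^2 d_{L,m}^2\cdot L^{-(m-1)/2}(2\pi)^{-(m-1)/2}/\sqrt{m} = \sqrt{m}\,L^{3/2}/\sqrt{2\pi}\,(1+O(L^{-1/2}))$. Changing variables to $(\bar u,t)$ with $\bar u=(u+v)/2$ and $t=v-u$, and expanding the slowly varying factors symmetrically about $\bar u$---notably $|v^m-u^m|=m\bar u^{m-1}|t|(1+O(t^2/\bar u^2))$---the Gaussian integration
\begin{equation*}
\int_{|t|<\delta}|t|\exp\!\left(-\frac{mLt^2}{2(1-\bar u^2)^2}\right)dt = \frac{2(1-\bar u^2)^2}{mL} + O(e^{-cN})
\end{equation*}
produces a leading integrand of $\sqrt{2mL/\pi}/(1-\bar u^2)$. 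Modulo $O(N^{-1/2})$ boundary corrections from the cutoff $uv<\alpha-N^{-1/2}$ in \eqref{critreg}, the $\bar u$-range becomes $(0,\sqrt{\alpha})$, and the remaining one-dimensional integral evaluates to $\mathrm{arctanh}(\sqrt{\alpha})$.

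The chief obstacle is certifying that the accumulated errors are genuinely $O(1)$, and the delicate region is $\bar u$ near zero. The symmetric expansion of $|v^m-u^m|$ produces a relative error of order $t^2/\bar u^2$; after the Gaussian integration this becomes $(1-\bar u^2)^2/(L\bar u^2)$, and multiplied by the leading density $\sqrt{L}/(1-\bar u^2)$ and integrated in $\bar u$ it yields $L^{-1/2}\int_{M'N^{-1/2}}^{\sqrt{\alpha}}(1-\bar u^2)/\bar u^2\,d\bar u = O(1)$. Thus the microscopic cutoff $\bar u>M'N^{-1/2}$ from Lemma \ref{lem:prequad} is precisely what is needed to keep the remainder at $O(1)$; the analogous but sharper relative errors $O(1/(\bar u^2 L))$ coming from the uniform asymptotics in Propositions \ref{prop:uniformweight} and \ref{prop:fint} integrate to the same scale. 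Simpler bookkeeping handles the corrections near $\bar u\to 1-\kappa'$ and along the boundary $uv=\alpha-N^{-1/2}$, where the Gaussian in $t$ is one-sidedly truncated.
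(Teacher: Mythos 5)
Your proposal is correct and reaches the same conclusion, but the route is genuinely different from the paper's, and arguably cleaner. The paper restricts to the wedge $\{v>u\}$ and shifts $v \to v+u$, so that the critical line $v=u$ becomes the boundary $v=0$. In that non-symmetric parametrization, the key computational step is Lemma~\ref{lem:qbound}: a nontrivial cancellation in $Q(u,v)$ causes the potentially fatal $v^{2}/u$ term to drop out, leaving a remainder starting at $v^{3}/u^{2}$. The paper emphasizes this explicitly---had the expansion started at $v^{2}/u$, the $u$-integral would produce $O(\log N)$ rather than $O(1)$. The cubic term in the exponent $\phi$ then has to be controlled separately via the decomposition $e^{\xi}F = F_{0} + (F-F_{0})e^{\xi} + (e^{\xi}-1)F_{0}$. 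Your symmetric change of variables $\bar u = (u+v)/2$, $t = v-u$ obtains both of these cancellations for free from parity: $|v^{m}-u^{m}|$ is odd in $t$ while the remaining factors $(uv)^{-(m-1)/2}$, $\bigl[(1-u^{2})(1-v^{2})/(1-uv)^{2}\bigr]^{mL/2}$, and the density $\bigl[(1-u^{2})(1-v^{2})(1-uv)\bigr]^{-1}$ are all even in $t$, so the product automatically has no $t^{2}$ term and the exponent has no $t^{3}$ term. The bookkeeping you then carry out---the relative error $O(t^{2}/\bar u^{2})$ becoming $(1-\bar u^{2})^{2}/(L\bar u^{2})$ after the $t$-integration, which integrates against the $\sqrt{L}/(1-\bar u^{2})$ density to $O(1)$ thanks to the cutoff $\bar u \gtrsim N^{-1/2}$, and the similar treatment of the $O(1/(\bar u^{2}L))$ errors from Propositions~\ref{prop:uniformweight} and~\ref{prop:fint}---matches the orders in the paper exactly, and your prefactor arithmetic (giving $\sqrt{2mL/\pi}/(1-\bar u^{2})$ as the leading $\bar u$-integrand) checks out. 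One small thing worth making explicit if you were to write this up in full: the constraints $\min(u,v) > M'N^{-1/2}$ and $uv < \alpha - N^{-1/2}$ do not translate into constraints on $\bar u$ alone but couple $\bar u$ and $t$; in the boundary layers the $t$-integral is one-sidedly truncated, as you note, and the resulting corrections are $O(1)$ by the same Gaussian bounds---this parallels the paper's handling of $u^{*}_{-}$ in \eqref{inti2expl}.
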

For ease of presentation, in this section we will just show that $I_{+,A} \sim \sqrt{\frac{2m\tilde{\gamma} N}{\pi}}\,\mathrm{arctanh}(\sqrt{\tilde{\alpha}})$ where $\tilde{\alpha} := \frac{1}{1+\tilde{\gamma}}$, leaving the full proof of Lemmas \ref{lem:prequad} and \ref{lem:critasympt} to Section \ref{sec:proofs}. Inserting \eqref{uniformweight} and \eqref{fintasytru} into \eqref{prequadapprox} leads to the approximation
\begin{equation}
I_{+,A} = C_{m,L}\int_{N^{-1/2}M'}^{\sqrt{\alpha-N^{-\frac{1}{2}}}}du\,\int_{u}^{\mathrm{min}(u+\delta,\frac{\alpha-N^{-\frac{1}{2}}}{u})}dv\,\frac{v^{m}-u^{m}}{(uv)^{\frac{m-1}{2}}}\,T_{L}(u,v)\left(1+O\left(\frac{1}{u^{2}L}\right)\right),\label{quadapprox}
\end{equation}
where
\begin{equation}
T_{L}(u,v) = \left[\frac{(1-u^{2})(1-v^{2})}{(1-uv)^{2}}\right]^{\frac{m\gamma N}{2}}(1-u^{2})^{-1}(1-v^{2})^{-1}(1-uv)^{-1}. \label{trunckern}
\end{equation}
In \eqref{quadapprox} we used the symmetry of the set $A$ to integrate only over the region $\{v>u\}$. The pre-factor $C_{m,L}$ is the result of collecting all the individual pre-factors from \eqref{fintasytru} and \eqref{uniformweight}, together with a factor $4m^{2}$ from \eqref{prequadapprox} and the symmetry of $A$. Using the standard asymptotics of the beta function, we have
\begin{equation}
C_{m,L} \sim \tilde{\gamma} N\,\sqrt{\frac{2m\tilde{\gamma} N}{\pi}}, \qquad N \to \infty. \label{cmlasy}
\end{equation}
The contribution of the term $O\left(\frac{1}{u^{2}L}\right)$ in \eqref{quadapprox} will be analysed more precisely in the full proof of Lemma \ref{lem:critasympt}. For now observe that since $u > M'N^{-\frac{1}{2}}$ its contribution goes to zero when $N \to \infty$ followed by $M \to \infty$; this is sufficient to neglect it at leading order. 

Proceeding now with the saddle point asymptotics, we see that the function \eqref{trunckern} has a critical point at $v=u$. This motivates the change of coordinates $v \to v/\sqrt{N}+u$ in \eqref{quadapprox} and using \eqref{cmlasy} we obtain
\begin{equation}
\begin{split}
I_{+,A} \sim \gamma\,\sqrt{\frac{2m\gamma N}{\pi}}\int_{N^{-1/2}M'}^{\sqrt{\alpha-N^{-\frac{1}{2}}}}du&\,\int_{0}^{\sqrt{N}\mathrm{min}(\delta,\frac{\alpha-N^{-\frac{1}{2}}}{u}-u)}dv\,\sqrt{N}Q(u,v/\sqrt{N})\\
&\times T_{L}\left(u,u+v/\sqrt{N}\right), \label{shiftiln}
\end{split}
\end{equation}
where
\begin{equation}
Q(u,v) = \frac{(u+v)^{m}-u^{m}}{\left(u\left(u+v\right)\right)^{\frac{m-1}{2}}}. \label{qdefintro}
\end{equation}
Regarding the integrand of \eqref{shiftiln}, we have the easily derived pointwise limits
\begin{align}
\lim_{N \to \infty}T_{L}\left(u,u+v/\sqrt{N}\right) &= \frac{\mathrm{exp}\left(-\frac{m\tilde{\gamma} v^{2}}{2(1-u^{2})^{2}}\right)}{(1-u^{2})^{3}}, \label{tl-limit}\\
\lim_{N \to \infty}\sqrt{N}Q(u,v/\sqrt{N}) &= mv.  \label{qlimit}
\end{align}
By \eqref{gaussbnd} we have the uniform bound $T_{L}(u,u+v/\sqrt{N}) \leq Ce^{-c v^{2}}$. Regarding the function $Q(u,v)$ in \eqref{qdefintro}, we use the Taylor expansion \eqref{qsmallu} and the fact that $u > M'N^{-\frac{1}{2}}$ to see that $\sqrt{N}Q(u,v/\sqrt{N})$ is uniformly bounded by a polynomial in $v$ with finite degree. Hence we can apply the dominated convergence theorem and pass the limit inside the integrals \eqref{shiftiln}, leading to
\begin{align}
I_{+,A} &\sim \tilde{\gamma} \sqrt{\frac{2m\tilde{\gamma} N}{\pi}}\int_{0}^{\sqrt{\tilde{\alpha}}}du\,\int_{0}^{\infty}dv\,mv\frac{\mathrm{exp}\left(-\frac{m\tilde{\gamma} v^{2}}{2(1-u^{2})^{2}}\right)}{(1-u^{2})^{3}}\\
&= \sqrt{\frac{2m\tilde{\gamma} N}{\pi}}\int_{0}^{\sqrt{\tilde{\alpha}}}du\,\frac{1}{1-u^{2}} \label{arctanhderiv}\\
&= \sqrt{\frac{2m\tilde{\gamma} N}{\pi}}\,\mathrm{arctanh}(\sqrt{\tilde{\alpha}}),
\end{align}
which is the desired leading order result for the number of real eigenvalues. 


\section{Proof of Lemmas \ref{lem:prequad} and \ref{lem:critasympt}}
\label{sec:proofs}
We now provide the key estimates that allow us to prove Lemmas \ref{lem:prequad} and \ref{lem:critasympt}. For a subset $E \subset [0,1] \times [-1,1]$, we will denote $I_{E}$ as the integral \eqref{iln} with the domain of integration restricted to $E$. We begin with the proof of Lemma \ref{lem:prequad}. This will be the immediate consequence of the following smaller Lemmas that follow now. 
\begin{lemma}
\label{lem:micro}
Let $M>0$ be a large fixed constant and define
\begin{equation}
E_{1} := \bigg\{(x,y) \in [0,1] \times [-1,1] : \{x < MN^{-\frac{m}{2}}\} \vee \{|y| < MN^{-\frac{m}{2}}\}\bigg\}.
\end{equation}
Then $I_{E_1} = O(1)$ as $N \to \infty$.
\end{lemma}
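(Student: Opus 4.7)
The aim is to show that the microscopic region $E_1$ near the origin contributes $O(1)$ to the integral \eqref{iln}. By the evenness of $w_L$ and a relabelling, decompose $E_1=E_1^{(x)}\cup E_1^{(y)}$ where $E_1^{(x)}=\{0\le x<MN^{-m/2},\,y\in[-1,1]\}$ and $E_1^{(y)}=\{0\le x\le 1,\,|y|<MN^{-m/2}\}$. The two pieces are structurally identical after renaming and using $w_L(-y)=w_L(y)$, so it suffices to bound $I_{E_1^{(x)}}$.

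I would then apply the change of variables $x=u^m$ and $|y|=v^m$, absorbing the sign of $y$ at the cost of replacing $f_{N-2,L}(xy)$ by $f_{N-2,L}(xy)+f_{N-2,L}(-xy)$. The Jacobian $m^2u^{m-1}v^{m-1}$ cancels the singular factor $|x|^{1/m-1}|y|^{1/m-1}$ present in the leading asymptotics of $w_L$ (Proposition~\ref{prop:uniformweight}), producing a bounded measure in the region where those asymptotics apply. The new domain is a thin strip $u\in[0,M^{1/m}N^{-1/2}]$ of width $O(N^{-1/2})$ with $v\in[0,1]$; the strategy is to show that the inner $v$-integral is $O(\sqrt{N})$ uniformly in $u$, so that the product with the strip width is the desired $O(1)$.

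To execute this, partition the $v$-integration at the threshold $v=M^{1/m}L^{-1/2}$. Above the threshold, Propositions~\ref{prop:uniformweight}, \ref{prop:trunco} and~\ref{prop:fint} provide uniform control, and the inner integral has the same saddle point structure as the leading-order calculation in Section~\ref{sec:leading}, yielding the required $O(\sqrt{N})$. Below the threshold both coordinates are microscopic, so $|xy|\le L^{-m}$ implies $f_{N-2,L}(xy)=1+O(1)$ directly from the series \eqref{fnl}, and the inner integral reduces to $O(1)$. The tail term $(xy)^{N-1}e_{N,m}/(xy-\alpha^m)$ from Proposition~\ref{prop:trunco} is doubly exponentially small in $E_1^{(x)}$, since $|xy|^{N-1}\le(MN^{-m/2})^{N-1}$ dominates the $e^{cN}$ growth of $e_{N,m}$.

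The main obstacle is the transition region where $u<M^{1/m}L^{-1/2}\le v$: the uniform asymptotics for $w_L(u^m)$ break down, while those for $w_L(v^m)$ and $f_{\infty,L}((uv)^m)$ are still applicable. Here I would replace the pointwise formula for $w_L(u^m)$ on the $u$-side by the probabilistic $L^1$ bound $\int_0^{MN^{-m/2}}w_L(x)\,dx=O(L^{m/4})$ coming from the Beta-product representation~\eqref{weight} (in which $w_L$, up to a constant of order $L^{m/4}$, is a probability density on $[-1,1]$); combined with the pointwise bounds in the $v$-variable, this still gives an $O(1)$ overall contribution. The key idea is that the loss of pointwise asymptotic control over a sub-region of microscopic volume is compensated by an integrated $L^1$ bound on that same sub-region.
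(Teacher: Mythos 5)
The paper's proof is shorter and takes a completely different route: it establishes a pointwise comparison $w_{L}(x)\leq CL^{3m/4}w_{\mathrm{Gin}}((L/2)^{m/2}x)$ and a termwise bound $|f_{N-2,L}(xy)|\leq f_{N-2}((1+\gamma)^{m}N^{m}|xy|)$ with the Ginibre weight and sum of Appendix~\ref{sec:realgin}, then rescales $x\to xL^{-m/2}$, $y\to yL^{-m/2}$. The three factors of $L^{-m/2}$ (two Jacobians and one from $|x-y|$) exactly cancel the $L^{3m/4}\cdot L^{3m/4}$ from the two weight bounds, and the whole thing collapses to an $N$\emph{-independent} convergent integral (boundedness of an entire function on a compact set). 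No partitioning of $E_{1}$ is needed, and the interplay between the decay of $w_{\mathrm{Gin}}(y)$ and the growth of $f_{N-2}(cxy)$ in $y$ is handled in one stroke by the moment formula.

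Your decomposition into below-threshold / transition / above-threshold pieces is a genuinely different strategy, and the easy pieces are correct: the tail term from \eqref{tfintest} is indeed doubly exponentially small on $E_{1}$, and $f_{N-2,L}(xy)=O(1)$ when both coordinates are microscopic is also fine. But the handling of the transition region — which you correctly identify as the main obstacle — does not go through as written, and your scaling intuition for the other piece is off. Two concrete problems.

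First, the strip-width heuristic is wrong. You write that the $u$-range $[0,M^{1/m}N^{-1/2}]$ has ``width $O(N^{-1/2})$'', so an inner $v$-integral of size $O(\sqrt{N})$ would suffice. But the relevant measure on the $u$-strip is not Lebesgue: after accounting for the Jacobian, $\int_{0}^{MN^{-m/2}}w_{L}(x)\,dx$ is of order $L^{m/4}$ (the singularity $|x|^{1/m-1}$ concentrates a macroscopic fraction of the $L^{1}$-mass of $w_{L}$ near the origin). So the inner integral would actually have to be $O(L^{-m/4})$, uniformly over microscopic $u$, which is a much sharper requirement than you acknowledge and not something ``the same saddle point structure as Section~\ref{sec:leading}'' delivers. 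In fact, for $u$ microscopic the saddle $v\approx u$ lies \emph{below} your threshold $v=M^{1/m}L^{-1/2}$, so the above-threshold $v$-integral is not a saddle-point integral at all — it is in the exponentially suppressed regime, and the statement that it is $O(\sqrt{N})$ is both misdirected and too weak.

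Second, and more seriously, in the transition region the $L^{1}$-bound on $w_{L}(u^{m})$ has to be paired with a bound on the $v$-integral that is \emph{uniform in $u$}, and that integral still contains $f_{N-2,L}(xy)$ with $xy$ ranging up to $\sim N^{-m/2}$, where $f_{N-2,L}$ is exponentially large (of order $e^{c\sqrt{N}}$ by Proposition~\ref{prop:fint}). You cannot pull $f_{N-2,L}$ out of the $v$-integral and bound it by its supremum — that is catastrophic — so the decay of $w_{L}(v^{m})$ has to be used against the growth of $f_{N-2,L}((uv)^{m})$ \emph{inside} the $v$-integral, and that is precisely the cancellation your argument does not establish. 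This is the exact difficulty the paper's Ginibre comparison is designed to resolve: after rescaling, the $y$-integral becomes $\int w_{\mathrm{Gin}}(y)y^{j}\,dy=(2^{j/2}\Gamma(j/2+1))^{m}$ term by term, and the series in $x$ obtained by pairing these moments with $f_{N-2}$ has infinite radius of convergence, which makes the $x$-dependence manifestly bounded on a compact set. Your plan has no analogue of this step, and without it the claim ``this still gives an $O(1)$ overall contribution'' is not substantiated.
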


\begin{proof}
From the definition \eqref{weight}, we have
\begin{equation}
w_{L}(x) = 2^{m-1} \left(\frac{L}{2 B( \frac{L}{2},\frac{1}{2})}\right)^{\frac{m}{2}} \int_{A_x} \left(1 - x^2 \prod_{i=1}^{m-1} y_i^{-2} \right)^{\frac{L}{2}-1} \prod_{i=1}^{m-1} \frac{(1-y_i^2)^{\frac{L}{2}-1}}{y_i} \, d\vec{y},
\end{equation}
where $A_x = \left\{ \vec{y} \in (0,1)^{m-1} \, : \, |x| < \prod_{i=1}^{m-1} |y_i| < 1 \right\}$. Now using the inequality $1-x^{2} \leq e^{-x^{2}}$,
\begin{equation}
w_{L}(x) \leq CL^{\frac{3m}{4}}\int_{[0,\infty)^{m-1}} \mathrm{exp}\left(-\left(\frac{L}{2}-1\right)\left(\frac{x^{2}}{y_{1}^{2}\ldots y_{m-1}^{2}}+\sum_{j=1}^{m-1}y_{j}^{2}\right)\right) \, \frac{d\vec{y}}{y_{1}\ldots y_{m-1}},
\end{equation}
and changing variables $y_{j} \to y_{j}x^{\frac{1}{m}}$ we see that the last integral is precisely the weight function \eqref{ginweightdef} of the real Ginibre ensemble, see the equivalent formula \eqref{ginweightdef2}. We thus have the bound $w_{L}(x) \leq CL^{\frac{3m}{4}}w_{\mathrm{Gin}}((L/2)^{\frac{m}{2}}x)$ where we used that $\frac{L}{2}-1 > \frac{L}{4}$. Furthermore, we bound the sum $|f_{N-2,L}(xy)|$ in terms of the analogous Ginibre sum \eqref{ginfdef} noting that
\begin{equation}
\begin{split}
\sum_{n=0}^{N-2}\binom{L+n}{n}^{m}|xy|^{n} \leq \sum_{n=0}^{N-2}\frac{(L+N)^{nm}|xy|^{n}}{(n!)^{m}} &= \sum_{n=0}^{N-2}\frac{|(1+\gamma)^{m}N^{m}xy|^{n}}{(n!)^{m}},  \label{ginsumbnd}
\end{split}
\end{equation}
so that by definition of \eqref{ginfdef} we have
\begin{equation}
|f_{N-2,L}(xy)| \leq f_{N-2}((1+\gamma)^{m}N^{m}|xy|).
\end{equation}
Then applying the change of variables $x \to xL^{-m/2}$ and $y \to yL^{-m/2}$ exactly cancels the large pre-factors coming from the weights and gives the bound,
\begin{equation}
\begin{split}
I_{E_1} &\leq C\int_{0}^{M'}dx\,\int_{0}^{\infty}dy\,(y+M)w_{\mathrm{Gin}}(x)w_{\mathrm{Gin}}(y)f_{N-2}(c|xy|)\\
&\leq  C\int_{0}^{M}w_{\mathrm{Gin}}(x)\,F(cx)\,dx \label{Ie1bnd}
\end{split}
\end{equation}
where we used $|x-y| < |y|+M$ (the sign of $x$ and $y$ is not relevant here) and defined
\begin{equation}
F(x) := \sum_{j=0}^{\infty}\frac{x^{j}}{(j!)^{m}}\int_{0}^{\infty}dy\,w_{\mathrm{Gin}}(y)y^{j+1}+M\sum_{j=0}^{\infty}\frac{x^{j}}{(j!)^{m}}\int_{0}^{\infty}dy\,w_{\mathrm{Gin}}(y)y^{j}.
\end{equation}
To conclude that $I_{E_1} = O(1)$ it suffices to check that $F(x)$ is bounded on the compact set $[0,M]$. To compute $F(x)$ we use the moment formula
\begin{equation}
\int_{0}^{\infty}dy\,y^{j+1}w_{\mathrm{Gin}}(y) = \left(2^{\frac{j}{2}}\Gamma(j/2+1)\right)^{m}.
\end{equation}
This follows directly from the definition of the Ginibre weight in \eqref{ginweightdef}. Therefore
\begin{equation}
F(x) = \sum_{j=0}^{\infty}\frac{x^{j}}{(j!)^{m}}\,\left(2^{\frac{j}{2}}\Gamma(j/2+1)\right)^{m}+M\sum_{j=0}^{\infty}\frac{x^{j}}{(j!)^{m}}\,\left(2^{\frac{j-1}{2}}\Gamma(j/2+1/2)\right)^{m}.
\end{equation}
It is straightforward to check that the radius of convergence of these power series is infinite. Consequently $F(x)$ defines an entire function of $x$ (in particular it is continuous and bounded on compact sets) and this implies that \eqref{Ie1bnd} is bounded. Therefore $I_{E_1} = O(1)$ as required.
\end{proof}

Now in the other regions outside $E_{1}$ in the positive quadrant, the contribution to \eqref{iln} is exponentially small on any set which is uniformly bounded away from the main diagonal.
\begin{lemma}
\label{lem:crudebnd}
Fix a large constant $M>0$. Uniformly on the domain $(x,y) \in [MN^{-\frac{m}{2}},1]^{2}$, we have the bound
\begin{equation}
|x-y|w_{L}(x)w_{L}(y)f_{N-2,L}(xy) \leq N^{c}e^{-c'N(u-v)^{2}} \label{crudeintegrand}
\end{equation}
where $u = x^{\frac{1}{m}}$, $v = y^{\frac{1}{m}}$. Consequently, if the domain $E \subset [MN^{-\frac{m}{2}},1]^{2}$ is strictly bounded away from the main diagonal $x=y$ by some fixed $\delta>0$ independent of $N$, then we have the exponential decay $I_{E} = O(e^{-cN})$ as $N \to \infty$.
\end{lemma}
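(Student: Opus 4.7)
The plan is to derive the pointwise bound \eqref{crudeintegrand} by combining the crude estimates of Propositions \ref{prop:uniformweight} and \ref{prop:fint}, then integrate. Since every coefficient of $f_{N-2,L}(t) = \sum_{n=0}^{N-2}\binom{L+n}{n}^{m} t^{n}$ is non-negative and $xy \geq 0$ throughout the domain, we have the trivial truncation bound $f_{N-2,L}(xy) \leq f_{\infty,L}(xy)$. Substituting \eqref{crudeweight} for each of $w_{L}(x)$, $w_{L}(y)$ and \eqref{crudefint} for $f_{\infty,L}(xy)$, and using the variables $u = x^{1/m}$, $v = y^{1/m}$, produces
\begin{equation*}
w_{L}(x)\, w_{L}(y)\, f_{N-2,L}(xy) \leq N^{c}\left[\frac{(1-u^{2})(1-v^{2})}{(1-uv)^{2}}\right]^{mL/2}.
\end{equation*}
One should note that \eqref{crudefint} applies on $[ML^{-m},1]$, which is guaranteed by $xy \geq M^{2}N^{-m}$ provided $M$ is chosen large enough in terms of $\tilde{\gamma}$ (recalling $L \sim \tilde{\gamma}N$).

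The essential algebraic step is the identity
\begin{equation*}
(1-u^{2})(1-v^{2}) = (1-uv)^{2} - (u-v)^{2},
\end{equation*}
which, using $(1-uv)^{2} \leq 1$ on $[0,1]^{2}$, gives
\begin{equation*}
\frac{(1-u^{2})(1-v^{2})}{(1-uv)^{2}} = 1 - \frac{(u-v)^{2}}{(1-uv)^{2}} \leq 1 - (u-v)^{2} \leq e^{-(u-v)^{2}}.
\end{equation*}
Raising to the power $mL/2$ and using $L \geq cN$ from hypothesis \eqref{hyp1} bounds this by $e^{-c'N(u-v)^{2}}$. Multiplying by the trivially bounded factor $|x-y| \leq 2$ completes the proof of \eqref{crudeintegrand}.

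For the assertion that $I_{E} = O(e^{-cN})$ when $E \subset [MN^{-m/2},1]^{2}$ stays at distance at least $\delta>0$ from the diagonal, the mean value theorem gives $|x-y| = |u^{m}-v^{m}| \leq m|u-v|$ on $[0,1]^{2}$, hence $|u-v| \geq \delta/m$ uniformly on $E$. Then $e^{-c'N(u-v)^{2}} \leq e^{-c'N\delta^{2}/m^{2}}$ on $E$, and integrating \eqref{crudeintegrand} over a region of area at most $2$ yields $I_{E} \leq 2N^{c}e^{-c'N\delta^{2}/m^{2}} = O(e^{-cN})$. No step here is technically difficult; the main content is recognising the algebraic identity above, which converts the ratio of crude weight and kernel bounds into Gaussian decay in the $(u-v)$ variable.
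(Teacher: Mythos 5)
Your proof is correct and follows the same approach as the paper: combine the crude bounds \eqref{crudeweight} and \eqref{crudefint} with the monotone truncation bound $f_{N-2,L}\leq f_{\infty,L}$, pass to the variables $u=x^{1/m}$, $v=y^{1/m}$, and convert the resulting ratio into Gaussian decay. The only cosmetic difference is that you re-derive the inequality $\frac{(1-u^2)(1-v^2)}{(1-uv)^2}\leq e^{-(u-v)^2}$ from the identity $(1-u^2)(1-v^2)=(1-uv)^2-(u-v)^2$, whereas the paper simply invokes Lemma~\ref{lem:gaussbnd} which records exactly that bound; your treatment of the integrated estimate $I_E=O(e^{-cN})$ via $|u-v|\geq\delta/m$ is likewise the intended argument.
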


\begin{proof}
This follows immediately from putting together the individual bounds \eqref{crudeweight}, $f_{N-2,L}(xy) \leq f_{\infty,L}(xy)$, \eqref{crudefint} and \eqref{gaussbnd}, together with the fact that $|x-y| \leq 1$.
\end{proof}

\begin{lemma}
\label{lem:edge}
Consider the following small neighbourhood of the hyperbola $xy=\alpha^{m}$:
\begin{equation}
E_{2} := \bigg\{(x,y) \in [0,1]^{2} : \{(\alpha-N^{-\frac{1}{2}})^{m} < xy < (\alpha+N^{-\frac{1}{2}})^{m}\}\bigg\}.
\end{equation}
Then we have $I_{E_2} = O(1)$ as $N \to \infty$.
\end{lemma}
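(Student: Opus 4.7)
The strategy exploits that $E_{2}$ is a thin hyperbolic strip of width $O(N^{-1/2})$ around the curve $uv=\alpha$ (in the coordinates $u=x^{1/m}$, $v=y^{1/m}$), while the integrand is Gaussian--concentrated within $O(N^{-1/2})$ of the diagonal $u=v$. Their intersection has area $O(N^{-3/2})$, which precisely cancels the large pre-factor $d_{L,m}^{2}L^{-(m-1)/2}$ arising from the leading asymptotics. Following the change of variables as in \eqref{prequadapprox}, set $x=u^{m}$, $y=v^{m}$, so that $E_{2}$ becomes $E_{2}'=\{(u,v)\in[0,1]^{2}:\alpha-N^{-1/2}<uv<\alpha+N^{-1/2}\}$, and
\begin{equation*}
I_{E_{2}} = 2m^{2}\iint_{E_{2}'}|u^{m}-v^{m}|\,w_{L}(u^{m})w_{L}(v^{m})(uv)^{m-1}f_{N-2,L}((uv)^{m})\,du\,dv.
\end{equation*}

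First I would split $E_{2}'=E_{\mathrm{close}}\cup E_{\mathrm{far}}$ with $E_{\mathrm{close}}=E_{2}'\cap\{|u-v|<\delta\}$ for some small fixed $\delta>0$. By Lemma~\ref{lem:crudebnd} (applied with $x=u^{m}$, $y=v^{m}$), $I_{E_{\mathrm{far}}}=O(e^{-cN})$. On $E_{\mathrm{close}}$ both $u$ and $v$ lie in a small fixed neighbourhood of $\sqrt{\alpha}$ and are uniformly bounded away from $0$ and $1$; hence the leading asymptotics of Propositions~\ref{prop:uniformweight} and~\ref{prop:fint} apply, and the bound $f_{N-2,L}\leq f_{\infty,L}$ (trivial from positivity of the coefficients) gives
\begin{equation*}
|u^{m}-v^{m}|\,w_{L}(u^{m})w_{L}(v^{m})(uv)^{m-1}f_{N-2,L}((uv)^{m}) \lesssim d_{L,m}^{2}\,L^{-(m-1)/2}\,|u-v|\,\left[\frac{(1-u^{2})(1-v^{2})}{(1-uv)^{2}}\right]^{mL/2},
\end{equation*}
where the factors $(1-u^{2})^{-1},(1-v^{2})^{-1},(1-uv)^{-1},(uv)^{-(m-1)/2}$ from the asymptotics have been absorbed into the implicit constant (they are all bounded on $E_{\mathrm{close}}$), and the elementary bound $|u^{m}-v^{m}|\leq m|u-v|$ has been used.

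The identity $(1-u^{2})(1-v^{2})=(1-uv)^{2}-(u-v)^{2}$ shows that the bracket equals $1-(u-v)^{2}/(1-uv)^{2}$, whose $mL/2$ power is at most $\exp(-cN(u-v)^{2})$ since $1-uv$ is bounded and $L\sim\gamma N$. Parametrising by $t=u-v$, $s=u+v$ (Jacobian $1/2$), the constraint $|uv-\alpha|<N^{-1/2}$ fixes $s$ to an interval of length $O(N^{-1/2})$ for each fixed $t\in(-\delta,\delta)$; consequently
\begin{equation*}
I_{E_{\mathrm{close}}} \lesssim d_{L,m}^{2}\,L^{-(m-1)/2}\int_{-\delta}^{\delta}|t|e^{-cNt^{2}}\,dt\cdot O(N^{-1/2}) \lesssim d_{L,m}^{2}\,L^{-(m-1)/2}\,N^{-3/2}.
\end{equation*}
Using $d_{L,m}^{2}\sim L^{1+m/2}\sim N^{1+m/2}$ from \eqref{dLm}, the right-hand side is $O(N^{1+m/2-(m-1)/2-3/2})=O(1)$, which proves the lemma.

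The main obstacle is producing the pre-factor $L^{-(m-1)/2}$ in the bound for $f_{N-2,L}$: using only the crude bound \eqref{crudefint} for $f_{\infty,L}$ would lose this factor entirely, yielding a total bound of $O(N^{(m-1)/2})$ which blows up for $m\geq 2$. Thus it is essential to invoke the sharper leading asymptotic of Proposition~\ref{prop:fint}, and the confinement to $E_{\mathrm{close}}$ is precisely what guarantees that $(uv)^{m}$ lies in its domain of uniform validity (away from both $0$ and $1$).
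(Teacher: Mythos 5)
Your proof is correct, and it reaches the same power count $d_{L,m}^{2}L^{-(m-1)/2}N^{-3/2}=O(1)$ that the paper obtains, but by a noticeably different route. The paper first discards the lower triangle by symmetry and then splits $E_{2}\cap\{y>x\}$ into three slices $E_{2,1}\cup E_{2,2}\cup E_{2,3}$ according to the value of $x=u^{m}$: one far piece killed by Lemma~\ref{lem:crudebnd}, and two near pieces on which it applies the asymptotics, shifts $v\to v+u$, and finishes with a one-dimensional Laplace estimate in $u$ with boundary critical point at $u=\sqrt{\alpha_{-}}$. You instead split by $|u-v|$ and rotate to $(s,t)=(u+v,u-v)$, where the hyperbolic constraint $|uv-\alpha|<N^{-1/2}$ confines $s$ to a window of width $O(N^{-1/2})$, orthogonally to the Gaussian decay $e^{-cNt^{2}}$ in $t$; Fubini then produces the $N^{-1/2}\cdot N^{-1}$ immediately, with no Laplace computation in $u$. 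Your version is a genuine simplification: it isolates the geometric reason the estimate works (a thin strip transverse to the Gaussian concentration direction) in a way that the paper's slicing obscures, and it avoids the auxiliary parameter $t$ and the boundary-Laplace step. Two small checks you implicitly rely on are both fine: on $E_{2}$ the constraint $xy>(\alpha-N^{-1/2})^{m}$ combined with $y\leq 1$ forces $x\geq(\alpha-N^{-1/2})^{m}$ (and symmetrically for $y$), so Lemma~\ref{lem:crudebnd} is applicable on all of $E_{\mathrm{far}}$; and on $E_{\mathrm{close}}$ both $u,v$ are indeed trapped near $\sqrt{\alpha}\in(0,1)$, so the prefactors $(1-u^{2})^{-1}$, $(1-v^{2})^{-1}$, $(1-uv)^{-1}$, $(uv)^{-(m-1)/2}$ and the relative errors in Propositions~\ref{prop:uniformweight} and~\ref{prop:fint} are all uniformly bounded as you claim. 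Your closing remark about the necessity of the $L^{-(m-1)/2}$ factor from Proposition~\ref{prop:fint} is also accurate and worth keeping: it is exactly the reason the paper, too, must invoke the sharp asymptotics rather than \eqref{crudefint} on the near pieces.
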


\begin{proof}
By symmetry we consider just the part of $E_{2}$ where $y>x$. We will make use of the bound $f_{N-2,L}(xy) \leq f_{\infty,L}(xy)$ throughout the proof. Let us denote $\alpha^{m}_{\pm} := (\alpha \pm N^{-\frac{1}{2}})^{m}$ and split the integration domain as the disjoint union $E_{2}\cap\{y > x\} =\bigcup_{j=1}^{3} E_{2,j}$ where
\begin{align}
E_{2,1} &= \bigg\{\alpha^{m}_{-} < x < t^{m}, \quad \frac{\alpha^{m}_{-}}{x} < y < \frac{\alpha^{m}_{+}}{x}\bigg\},\\
E_{2,2} &= \bigg\{t^{m} < x < \alpha^{\frac{m}{2}}_{-}, \quad \frac{\alpha^{m}_{-}}{x} < y < \frac{\alpha^{m}_{+}}{x}\bigg\},\\
E_{2,3} &= \bigg\{\alpha^{\frac{m}{2}}_{-} < x < \alpha^{\frac{m}{2}}_{+}, \quad x < y < \frac{\alpha^{m}_{+}}{x}\bigg\},
\end{align}
and $t$ is any fixed constant such that $\alpha_{-} < t < \sqrt{\alpha_{-}}$. The main contribution will come from a small neighbourhood of the point $x = \alpha_{-}^{\frac{m}{2}}$, while the contribution from $E_{2,1}$ is exponentially small due to Lemma \ref{lem:crudebnd}. On $E_{2,2}$ and $E_{2,3}$ the asymptotics \eqref{uniformweight} and \eqref{fintasytru} can be applied and we mimic the steps that led to \eqref{quadapprox}. Also shifting $v \to v+u$, applying \eqref{gaussbnd} and Lemma \ref{lem:qbound}, we find
\begin{equation}
\label{E2decomp}
I_{E_{2,2}} \leq CN^{\frac{3}{2}}\int_{t}^{\sqrt{\alpha_{-}}}du\,\int_{\frac{\alpha_{-}}{u}-u}^{\infty}dv\,ve^{-cNv^{2}}, \qquad I_{E_{2,3}} \leq CN^{\frac{3}{2}}\int_{\sqrt{\alpha_{-}}}^{\sqrt{\alpha_{+}}}du\,\int_{0}^{\infty}dv\,ve^{-cNv^{2}}.
\end{equation}
The integrals over $v$ are now explicit and we get
\begin{equation}
I_{E_{2,2}} \leq CN^{\frac{1}{2}}\int_{t}^{\sqrt{\alpha_{-}}}du\,e^{-cN(\frac{\alpha_{-}}{u}-u)^{2}} = O(1), \qquad N \to \infty, \label{E22laplace}
\end{equation}
where the $O(1)$ bound follows from a standard Laplace approximation near the critical point $u = \sqrt{\alpha_{-}}$. Evaluating the second integral in \eqref{E2decomp} we obtain $I_{E_{2,3}} \leq CN^{\frac{1}{2}}(\sqrt{\alpha_{+}}-\sqrt{\alpha_{-}}) = O(1)$ as $N \to \infty$. This completes the proof of the Lemma.
\end{proof}

We now show that the second term in \eqref{tfintest} does not contribute to the leading order asymptotics. Note that for the purposes of proving Lemma \ref{lem:prequad} we do not need to show this on the union $E' = E_{1} \cup E_{2}$ of the two negligible sets of Lemmas \ref{lem:micro} and \ref{lem:edge}.

\begin{lemma}
\label{lem:2ndterm}
Consider the contribution of the second term in \eqref{tfintest} to the integral \eqref{iln} on the set $D := [0,1-\kappa] \times [-1+\kappa,1-\kappa] \setminus E'$, taking $\omega = N^{-\frac{1}{2}}$ in Proposition \ref{prop:trunco}. Then  
\begin{equation}
J := 2\int_{D}dx\,dy\,|x-y|w_{L}(x)w_{L}(y)\,e_{N,m}\frac{(xy)^{N-1}}{xy-\alpha^{m}} = O(1), \qquad N \to \infty. \label{jint}
\end{equation}
Furthermore, consider the following thin layer near $\pm 1$ of width $\kappa>0$,
\begin{equation}
E_{3} := \bigg\{(x,y) \in [0,1] \times [-1,1] : \{x > 1-\kappa\} \vee \{|y| > 1-\kappa\}\bigg\}\setminus E', \label{setE3}
\end{equation}
and finally the remaining part of the negative-$y$ quadrant
\begin{equation}
E_{4} := ([0,1] \times [-1,0]) \setminus (E'\cup E_{3}). \label{setE4}
\end{equation}
Then for $\kappa>0$ sufficiently small we have $I_{E_3} = O(e^{-cN})$ and $I_{E_4} = O(1)$ as $N \to \infty$.
\end{lemma}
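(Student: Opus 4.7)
The plan is to address the three claims in turn, using the crude bounds from Propositions \ref{prop:uniformweight} and \ref{prop:trunco}, together with a saddle-point analysis on the positive quadrant and exponential weight decay near $\pm 1$.

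For the bound on $J$, I would first change to the coordinates $x=u^{m}$, $y=v^{m}$ on $D\cap\{y>0\}$ and apply the crude weight estimate \eqref{crudeweight} and the explicit form \eqref{enm} of $e_{N,m}$. The integrand is then controlled by
\begin{equation*}
N^{c}\,\frac{|u^{m}-v^{m}|}{|(uv)^{m}-\alpha^{m}|}\,\exp\bigl(N m f(u,v)\bigr),
\end{equation*}
where
\begin{equation*}
f(u,v):=\frac{\gamma}{2}\bigl[\log(1-u^{2})+\log(1-v^{2})\bigr]+\log(uv)-\gamma\log\gamma+(1+\gamma)\log(1+\gamma).
\end{equation*}
A direct computation shows $f\leq 0$ on $(0,1)^{2}$, with a unique non-degenerate maximum at $(u_{\ast},v_{\ast})=(\sqrt{\alpha},\sqrt{\alpha})$ where $f=0$ and the Hessian is diagonal with eigenvalue $-2(1+\gamma)^{2}/\gamma$. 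Tracking prefactors, $d_{L,m}^{2}\sim L^{1+m/2}$ cancels the $N^{-m/2}$ in $e_{N,m}$, leaving an overall $O(N)$ polynomial factor. The subtlety is that the saddle sits on the hyperbola $xy=\alpha^{m}$ where $|xy-\alpha^{m}|^{-1}$ blows up. To handle this, I would rescale $u=\sqrt{\alpha}+s/\sqrt{N}$, $v=\sqrt{\alpha}+t/\sqrt{N}$: then $|u^{m}-v^{m}|\sim c_{1}|s-t|/\sqrt{N}$, $|(uv)^{m}-\alpha^{m}|\sim c_{2}|s+t|/\sqrt{N}$, $du\,dv=N^{-1}ds\,dt$, and the exclusion $E_{2}$ becomes $|s+t|\geq c_{0}$ for some fixed $c_{0}>0$. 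The local contribution thus reduces to the finite integral $\int_{|s+t|\geq c_{0}}|s-t|/|s+t|\cdot e^{-c(s^{2}+t^{2})}\,ds\,dt$; the complement of the saddle neighbourhood is $O(e^{-cN})$ by the global negativity of $f$ and the blow-up of $-f$ on $\partial(0,1)^{2}$.

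For $I_{E_{3}}$, I would exploit that on $E_{3}$ at least one of $|x|$ or $|y|$ exceeds $1-\kappa$, so the corresponding weight is exponentially small: by \eqref{crudeweight}, $w_{L}(\cdot)\leq N^{c}(1-(1-\kappa)^{2/m})^{mL/2}\leq N^{c}e^{-c'N}$. All remaining factors admit polynomial-in-$N$ upper bounds via $f_{N-2,L}(xy)\leq f_{\infty,L}(|xy|)$ combined with \eqref{crudefint}, and for $\kappa$ sufficiently small the exponential prevails, giving $I_{E_{3}}=O(e^{-cN})$.

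For $I_{E_{4}}$, on the negative-$y$ quadrant one has $xy\leq 0$, so $|xy-\alpha^{m}|\geq\alpha^{m}$ and the singular factor in the second term of \eqref{tfintest} is harmless. Writing $y=-v^{m}$ with $v\in(0,1)$, I would split the contribution in two: (a) the $f_{\infty,L}$ piece, estimated by \eqref{negxbnd} as $C(1-uv)^{-mL}e^{-cLuv}$, combines with the weights to give an integrand bounded by $N^{c}\bigl[(1-u^{2})(1-v^{2})/(1-uv)^{2}\bigr]^{mL/2}e^{-cLuv}$; the bracket is at most $1$ since $(1-u^{2})(1-v^{2})=(1-uv)^{2}-(u-v)^{2}$, and the exponential together with the $|x-y|=u^{m}+v^{m}$ factor, after rescaling $s=\sqrt{L}u$, $t=\sqrt{L}v$, produces an $O(1)$ integral; (b) the $e_{N,m}$ piece is analysed by the same Laplace computation as in the bound on $J$, but now the saddle $(\sqrt{\alpha},\sqrt{\alpha})$ in $(u,v)$ corresponds to $(x,y)=(\alpha^{m/2},-\alpha^{m/2})$, at which both $|x-y|$ and $|xy-\alpha^{m}|$ are $\Theta(1)$, so the standard Laplace expansion gives a $1/N$ factor which combines with the $O(N)$ prefactor to give $O(1)$. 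The main obstacle is the saddle-point analysis for $J$, where the singularity of $|xy-\alpha^{m}|^{-1}$ coincides with the Laplace saddle; the delicate point is that both the vanishing of $|x-y|$ along the diagonal and the removal of the $E_{2}$ strip are needed simultaneously to control the integral.
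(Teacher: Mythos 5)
Your treatment of $J$ is essentially sound and in fact somewhat more refined than the paper's: the paper simply bounds $|xy-\alpha^m|^{-1}\leq C\sqrt{N}$ uniformly on $D$ (since $E_2$ is excluded) and then runs a standard Laplace argument in which the vanishing of $|u^m-v^m|$ at the saddle supplies the compensating $N^{-1/2}$; you keep the singular factor, rescale $u,v$ near $\sqrt{\alpha}$ at scale $N^{-1/2}$, and show the localised integral $\int_{|s+t|\geq c_0}\frac{|s-t|}{|s+t|}e^{-c(s^2+t^2)}\,ds\,dt$ is finite. Both are valid, and your route makes the role of the $E_2$ cutoff more visible. Your $I_{E_4}$ sketch is also workable in outline, though you should be explicit that the lower cutoff $u,v>M'N^{-1/2}$ coming from the removal of $E_1$ is what makes the rescaled integral converge (and you should carry the $(uv)^{m-1}$ Jacobian factor and the actual power of $L$ in the prefactor to close the bookkeeping; the paper instead combines the bracketed Gaussian-type factor with $e^{-cLuv}$ as in \eqref{negvest} to obtain $e^{-\delta L(u^2+v^2)}$).

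There is, however, a genuine gap in your argument for $I_{E_3}$. You claim that on $E_3$ one of the weights, say $w_L(x)$ with $x>1-\kappa$, is exponentially small and ``all remaining factors admit polynomial-in-$N$ upper bounds''. That second assertion is false: the remaining product $w_L(y)f_{N-2,L}(xy)$ is \emph{not} polynomially bounded in general. For instance, with $u=x^{1/m}$, $v=y^{1/m}$ and $u=v$ close to $1$, the crude bounds give
\[
w_L(y)\,f_{\infty,L}(xy)\ \leq\ N^{c}\left[\frac{1-v^2}{(1-uv)^2}\right]^{\frac{mL}{2}}\ \sim\ N^{c}(1-v^2)^{-\frac{mL}{2}},
\]
which grows exponentially in $L\sim\gamma N$. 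When this is multiplied by the exponentially small $w_L(x)\leq N^c(1-u^2)^{mL/2}$, the net effect (via Lemma~\ref{lem:gaussbnd}) is only the \emph{bounded} quantity $N^{c}e^{-cL(u-v)^2}$ — not something decaying like $e^{-cN}$, since $|u-v|$ may be arbitrarily small near the diagonal. Bounding $f_{N-2,L}(xy)\leq f_{\infty,L}(|xy|)$ throws away precisely the information that saves the day: the indicator $\mathbbm{1}_{xy<(\alpha-\omega)^m}$ in \eqref{tfintest}, which, once one chooses $\kappa$ so that $1-\kappa>\sqrt{\alpha}$, forces $v\leq \alpha/u$ strictly smaller than $u$ and thereby keeps $|u-v|$ uniformly bounded away from $0$ on $E_3\cap\{y>0\}$. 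That, together with Lemma~\ref{lem:crudebnd} (and \eqref{negxbnd} on the anti-diagonal side for $y<0$), is what actually produces the $O(e^{-cN})$ bound. The contribution of the second term in \eqref{tfintest} on $E_3$ must also be addressed separately (it is $O(e^{-cN})$ because $E_3$ lies outside the saddle region of the Laplace computation used to bound $J$), and your sketch does not address it. In short, the fix is to argue from the decomposition of Proposition~\ref{prop:trunco} rather than from the blunt inequality $f_{N-2,L}\leq f_{\infty,L}$.
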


\begin{proof}
We begin with the estimate \eqref{jint}. Write $J = J_{+} + J_{-}$ where $J_{+}$ (resp. $J_{-}$) denotes the contribution to $J$ from $y>0$ (resp. $y<0$). First consider $J_{+}$, on which we bound the term $|(xy-\alpha^{m})^{-1}| \leq C\sqrt{N}$ due to the fact that the interval $(\alpha-N^{-\frac{1}{2}})^{m} < xy < (\alpha+N^{-\frac{1}{2}})^{m}$ lies outside the integration domain $D$. Inserting the asymptotics of the weights \eqref{uniformweight} and extending the domain of integration to $[0,1]^{2}$, we arrive at
\begin{equation}
\begin{split}
&J_{+} \leq CN^{\frac{m+3}{2}}e_{N,m}\int_{0}^{1}du\,\int_{0}^{1}dv\,|u^{m}-v^{m}|(1-u^{2})^{\frac{m\gamma N}{2}-1}(1-v^{2})^{\frac{m\gamma N}{2}-1}(uv)^{m(N-1)}\\
&\sim CN^{\frac{m+3}{2}}e_{N,m}\int_{\sqrt{\alpha}-\epsilon}^{\sqrt{\alpha}+\epsilon}du\,\int_{\sqrt{\alpha}-\epsilon}^{\sqrt{\alpha}+\epsilon}dv\,|u^{m}-v^{m}|(1-u^{2})^{\frac{m\gamma N}{2}-1}(1-v^{2})^{\frac{m\gamma N}{2}-1}(uv)^{m(N-1)}. \label{localize2ndterm}
\end{split}
\end{equation}
To arrive at the second line of \eqref{localize2ndterm} we have noted that $\sqrt{\alpha}$ is a critical point of the integrand in both $u$ and $v$ variables, so that contributions from outside the small $\epsilon$-neighbourhood of $\sqrt{\alpha}$ are exponentially suppressed. Applying the standard techniques of the Laplace method, we obtain two factors of $N^{-\frac{1}{2}}$ coming from Taylor expanding near the critical point, and an extra factor of $N^{-\frac{1}{2}}$ from the term $|u^{m}-v^{m}|$. Then the leading contribution from evaluating the integrand of \eqref{localize2ndterm} at both critical points gives an upper bound of order
\begin{equation}
O\left(N^{\frac{m}{2}}e_{N,m}(1-\alpha)^{m\gamma N}\alpha^{mN}\right) = O(1),
\end{equation}
where to deduce the $O(1)$ bound, we inserted the explicit form of $e_{N,m}$ defined in \eqref{enm}, keeping in mind that $\alpha := \frac{1}{1+\gamma}$. 

The procedure for $J_{-}$ is exactly the same as for $J_{+}$ except that since now $xy$ is negative, we can bound $|xy-\alpha^{m}|^{-1}$ by an absolute constant. For the same reason the term $|x-y|$ in \eqref{iln} is of no use in the bounds and we use the simple fact that $|x-y| \leq 2$. This results in an upper bound 
\begin{equation}
J_{-} \leq CN^{\frac{m}{2}+1}e_{N,m}\int_{\sqrt{\alpha}-\epsilon}^{\sqrt{\alpha}+\epsilon}du\,\int_{\sqrt{\alpha}-\epsilon}^{\sqrt{\alpha}+\epsilon}dv\,(1-u^{2})^{\frac{m\gamma N}{2}-1}(1-v^{2})^{\frac{m\gamma N}{2}-1}(uv)^{m(N-1)} = O(1), \label{jmin}
\end{equation}
as $N \to \infty$, by the same reasoning. This completes the proof of \eqref{jint}.

Regarding the set $E_{3}$ in \eqref{setE3}, in the above bounds for $J_{+}$ and $J_{-}$ we saw that the second term in \eqref{tfintest} can be neglected outside $E'$ and gives an $O(1)$ contribution. In fact repeating these estimates restricted to the set $E_{3}$ gives a contribution of order $O(e^{-cN})$ because $E_{3}$ lies outside the saddle point region discussed below \eqref{localize2ndterm}. For this to be true we must choose $\kappa>0$ small enough such that $1-\kappa > \sqrt{\alpha}$ for all $N$ sufficiently large, which is guaranteed by hypothesis \eqref{hyp1}. Thus it remains to show the same estimate for the first term in \eqref{tfintest}. For $y>0$, the indicator function in \eqref{tfintest} combined with Lemma \ref{lem:crudebnd} gives the required exponential decay because $E_{3} \cap \{xy < (\alpha-N^{-\frac{1}{2}})^{m}\}$ is uniformly bounded away from the main diagonal. When $y<0$ we similarly note that $E_{3} \cap \{xy > -(\alpha+N^{-\frac{1}{2}})^{m}\}$ is uniformly bounded away from the anti-diagonal $x=-y$. Then $I_{E_3 \cap \{y<0\}} = O(e^{-cN})$ follows as in Lemma \ref{lem:crudebnd}, using \eqref{negxbnd} in place of \eqref{crudefint}.

Finally we show that $E_{4}$ in \eqref{setE4} is negligible. By \eqref{jint} we can ignore the second term in \eqref{tfintest} and deal only with the first term $f_{\infty,L}(xy)$. When $y<0$ we change variables with $x = u^{m}$ and $y=-(v^{m})$. Making use of the bound \eqref{negxbnd}, we insert the asymptotics of the weights \eqref{uniformweight}. The combination of the main terms in \eqref{uniformweight} and \eqref{negxbnd} is bounded using \eqref{gaussbnd}, namely for any $\delta>0$ we have
\begin{equation}
\begin{split}
\left(\frac{(1-u^{2})(1-v^{2})}{(1-uv)^{2}}\right)^{\frac{mL}{2}}e^{-cLuv} &\leq e^{-\frac{mL}{2}(u-v)^{2}-c Luv}\\
&= e^{-\delta L(u^{2}+v^{2}) - L(\frac{m}{2}-\delta)(u-v)^{2}-L(c-2\delta)uv}\\
& \leq e^{-\delta L(u^{2}+v^{2})}, \label{negvest}
\end{split}
\end{equation}
where we took $0 < \delta < \mathrm{min}(\frac{m}{2},\frac{c}{2})$ in the final inequality. This gives
\begin{equation}
I_{E_{4}} \leq CL^{\frac{m}{2}+1}\int_{0}^{1}du\,\int_{0}^{1}dv\,(u^{m}+v^{m})e^{-\delta L(u^{2}+v^{2})}+J = O(1),
\end{equation}
and this completes the proof of Lemma \ref{lem:2ndterm}.
\end{proof}

\begin{proof}[Proof of Lemma \ref{lem:prequad}]
This follows immediately from combining the three Lemmas \ref{lem:micro}, \ref{lem:edge}, \ref{lem:2ndterm} and noting the presence of the indicator function $xy < (\alpha-N^{-\frac{1}{2}})^{m}$ in Proposition \ref{prop:trunco}.
\end{proof}
\begin{proof}[Proof of Lemma \ref{lem:critasympt}]
We start from the expression \eqref{quadapprox}, which after the shift $v \to v+u$ we can write as
\begin{equation}
I_{+,A} = C_{m,L}\,\int_{N^{-1/2}M'}^{\sqrt{\alpha-N^{-\frac{1}{2}}}}du\,\int_{0}^{\mathrm{min}(\delta,\frac{\alpha-N^{-\frac{1}{2}}}{u}-u)}dv\,F(u,v)e^{\frac{m\gamma N}{2}\phi(u,v)}\left(1+O\left(\frac{1}{u^{2}L}\right)\right), \label{quadapproxpf}
\end{equation}
where
\begin{equation}
\phi(u,v) = \log\left(\frac{(1-u^{2})(1-(v+u)^{2})}{(1-u^{2}(v+u)^{2})}\right),
\end{equation}
and in terms of \eqref{qdefintro},
\begin{equation}
F(u,v) = Q(u,v)\,(1-u^{2})^{-1}(1-(v+u)^{2})^{-1}(1-u(v+u))^{-1}.
\end{equation}
The pre-factor outside \eqref{quadapprox} satisfies 
\begin{equation}
C_{m,L} = \gamma N\,\sqrt{\frac{2m\gamma N}{\pi}}\left(1+O\left(\frac{1}{\sqrt{N}}\right)\right), \qquad N \to \infty, \label{cml2}
\end{equation}
where we made use of \eqref{dLm}. Let us denote $I^{(1)}_{+,A}$ the contribution of \eqref{quadapproxpf} where we discard the error terms $O(N^{-\frac{1}{2}})$ in \eqref{cml2} and $O\left(\frac{1}{u^{2}L}\right)$ in \eqref{quadapproxpf}; we will explain at the end of the proof why $I_{+,A} = I^{(1)}_{+,A} + O(1)$ as $N \to \infty$. We begin by Taylor expanding $F(u,v)$ and the action $\phi(u,v)$ near $v=0$. A careful estimation of the remainder, see the bound \eqref{qsmallu}, shows that uniformly on $u \in [0,1-\kappa)$, we have
\begin{equation}
F(u,v) = \frac{mv}{(1-u^{2})^{3}} + O(v^{2}) + \sum_{j=0}^{m-1}O\left(\frac{v^{3+j}}{u^{2+j}}\right), \qquad 0 < v < \delta. \label{Fxvexpan}
\end{equation}
Note that the singular powers of $u$ in \eqref{Fxvexpan} should be handled carefully. We will show below that they contribute at most $O(1)$ as $N \to \infty$, due to the fact that the $u$-integration starts at $N^{-\frac{1}{2}}M'$, meanwhile the high powers of $v$ give successively smaller contributions to the Laplace asymptotics. In particular it is important that the summation in \eqref{Fxvexpan} starts at $v^{3}/u^{2}$ and not $v^{2}/u$, as the latter would give an estimate of order $O(\log(N))$ instead of $O(1)$.

For the action we have $\phi(u,v) = -\frac{v^{2}}{(1-u^{2})^{2}}+E_{3}(u,v)$ where $E_{3}(u,v)$ is a remainder satisfying $E_{3}(u,v) = O(v^{3})$ uniformly on $u \in [0,1-\kappa)$. Let us denote $\xi = \frac{m\gamma N}{2}E_{3}(u,v)$ and $F_{0} = \frac{mv}{(1-u^{2})^{3}}$ in what follows. Then we make the decomposition
\begin{equation}
e^{\xi}F = F_{0} + (F-F_{0})e^{\xi} + (e^{\xi}-1)F_{0} \label{xidecompF}
\end{equation}
and thus write $I^{(1)}_{+,A}$ as the sum of three separate contributions coming from each term in \eqref{xidecompF}: $I^{(1)}_{+,A} = J_{1}+J_{2}+J_{3}$.

For the leading term $J_{1}$, the integral over $v$ is explicit and we obtain
\begin{equation}
\begin{split}
J_{1} = \sqrt{\frac{2 m \gamma N}{\pi}}\,\int_{N^{-1/2}M'}^{\sqrt{\alpha-N^{-\frac{1}{2}}}}du\,\frac{1}{1-u^{2}}-\sqrt{\frac{2 m \gamma N}{\pi}}\,\int_{N^{-1/2}M'}^{\sqrt{\alpha-N^{-\frac{1}{2}}}}du\,\frac{e^{-\frac{m\gamma N}{2}(\mathrm{min}(\delta,\frac{\alpha-N^{-\frac{1}{2}}}{u}-u))^{2}}}{1-u^{2}}. \label{inti2expl}
\end{split}
\end{equation}
In the second integral of \eqref{inti2expl}, define by $u^{*}_{-}$ the unique positive solution of the equation
\begin{equation}
\frac{\alpha-N^{-\frac{1}{2}}}{u} = u+\delta.
\end{equation}
For $u < u^{*}_{-}$ the minimum in \eqref{inti2expl} is $\delta$ and this yields an exponentially small contribution. For $u > u^{*}_{-}$, the minimum is $\frac{\alpha-N^{-\frac{1}{2}}}{u}-u$, which gives an integral of the form \eqref{E22laplace}, so that the second term in \eqref{inti2expl} is $O(1)$. Meanwhile the first integral in \eqref{inti2expl} clearly gives the correct leading term up to errors of order $O(1)$ and so
\begin{equation}
J_{1} = \sqrt{\frac{2 m \gamma N}{\pi}}\,\mathrm{arctanh}(\sqrt{\alpha}) + O(1), \qquad N \to \infty.
\end{equation}
For $J_{2}$ we use \eqref{gaussbnd} and \eqref{Fxvexpan}. The contribution from the error terms in \eqref{Fxvexpan} can then be upper bounded by integrals of the form
\begin{equation}
\sum_{j=0}^{m-1}N^{\frac{3}{2}}\int_{N^{-\frac{1}{2}}M'}^{1}du\,\frac{1}{u^{2+j}}\int_{0}^{\infty}dv\,v^{3+j}e^{-cNv^{2}}\,dv = O(1), \qquad N \to \infty \label{sumo1}
\end{equation}
and
\begin{equation}
N^{\frac{3}{2}}\int_{N^{-\frac{1}{2}}M'}^{1}du\,\int_{0}^{\infty}dv\,v^{2}e^{-cNv^{2}}\,dv = O(1), \qquad N \to \infty.
\end{equation}
Therefore $J_{2} = O(1)$. For $J_{3}$ we use $|e^{\xi}-1| \leq |\xi| e^{|\xi|} \leq CNv^{3}e^{cN\delta v^{2}}$. Hence, choosing $\delta>0$ small enough, the contribution of this error term gives rise to an integral of the form
\begin{equation}
N^{\frac{5}{2}}\int_{N^{-\frac{1}{2}}M'}^{1}du\,\int_{0}^{\infty}dv\,v^{4}e^{-cNv^{2}}\,dv = O(1), \qquad N \to \infty,
\end{equation}
and hence $J_{3} = O(1)$. Finally, if we had included the error term $O\left(\frac{1}{Lu^{2}}\right)$, we would gain an additional power of $N^{-\frac{1}{2}}$ in all the estimates, due to the fact that $\int_{N^{-\frac{1}{2}}M'}^{1}\frac{1}{Lu^{2}}\,du = O(N^{-\frac{1}{2}})$ and similarly in \eqref{sumo1}. The same is true had we included the error term of order $N^{-\frac{1}{2}}$ in \eqref{cml2}. Therefore these error terms can only contribute at most $O(1)$ to $I_{+,A}$ and this completes the proof of \eqref{o1error}. The fact that $I_{+} = I_{+,A} + O(e^{-cN})$ is an immediate consequence of Lemma \ref{lem:crudebnd}. This completes the proof of Lemma \ref{lem:critasympt}.
\end{proof}

\section{Convergence of the eigenvalue density}
\label{sec:convdens}
The goal of this section is to prove Theorem \ref{th:densthm}. Recall that
\begin{equation}
\rho_{N}(x) = w_{L}(x)\int_{-1}^{1}dy\,|x-y|w_{L}(y)f_{N-2,L}(xy). \label{rhodens}
\end{equation}
\begin{proof}[Proof of \eqref{hlimit}]
Since $h$ is bounded and the integrand of \eqref{densiln} is positive, by Lemmas \ref{lem:prequad} and \ref{lem:critasympt} we immediately find that 
\begin{equation}
\int_{-1}^{1}h(x)\tilde{\rho}_{N}(x)\,dx = \frac{I_{-,A}(h)+I_{+,A}(h)}{\mathbb{E}(N^{(m)}_{\mathbb{R}})}+O(N^{-\frac{1}{2}}), \qquad N \to \infty,
\end{equation}
where the analogue of \eqref{prequadapprox} is
\begin{equation}
\begin{split}
I_{\pm,A}(h) &= \frac{1}{2}\,C_{m,L}\int_{N^{-\frac{1}{2}}M'}^{\sqrt{\alpha-N^{-\frac{1}{2}}}}du\,h(\pm u^{m})\,\int_{u}^{\mathrm{min}(u+\delta,\frac{\alpha-N^{-\frac{1}{2}}}{u})}dv\,\frac{v^{m}-u^{m}}{(uv)^{\frac{m-1}{2}}}\,T_{L}(u,v)\left(1+O\left(\frac{1}{u^{2}L}\right)\right)\\
&+\frac{1}{2}\,C_{m,L}\int_{N^{-\frac{1}{2}}M'}^{\sqrt{\alpha-N^{-\frac{1}{2}}}}du\,h(\pm u^{m})\,\int_{\mathrm{max}(N^{-\frac{1}{2}}M',u-\delta)}^{u}dv\,\frac{u^{m}-v^{m}}{(uv)^{\frac{m-1}{2}}}\,T_{L}(u,v)\left(1+O\left(\frac{1}{v^{2}L}\right)\right).
\end{split}
\end{equation}
Finally the limit \eqref{hlimit} follows from changing coordinates $v \to u+\frac{v}{\sqrt{N}}$ (similarly to \eqref{shiftiln}) and again using dominated convergence with the pointwise limits \eqref{tl-limit} and \eqref{qlimit}.
\end{proof}
To obtain the pointwise limit of $\rho_{N}(x)$ requires some further estimates, but they follow a similar pattern to the proofs of Section \ref{sec:proofs}. As in Lemma \ref{lem:micro}, we start by showing that the small region near $y=0$ can be removed from the integration in \eqref{rhodens}.

\begin{lemma}
For any fixed $x \in (0,1]$ we have
\begin{equation}
w_{L}(x)\int_{-MN^{-\frac{m}{2}}}^{MN^{-\frac{m}{2}}}dy\,|x-y|w_{L}(y)f_{N-2,L}(xy) = O(e^{-cN}), \qquad N \to \infty. \label{smallvexp}
\end{equation}
\end{lemma}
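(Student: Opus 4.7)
The plan is to extract the prefactor $w_{L}(x)$ and show that it alone is exponentially small in $N$ for any fixed $x \in (0,1]$, while the remaining integral over the microscopic interval $|y| < MN^{-m/2}$ grows only sub-exponentially.

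First I would apply the crude bound \eqref{crudeweight} to get $w_{L}(x) \leq N^{c}(1 - x^{2/m})^{mL/2}$. Because $x$ is fixed in $(0,1]$, the quantity $1 - x^{2/m}$ is a constant strictly less than $1$ (and equal to $0$ at $x=1$), and combined with $L \sim \tilde{\gamma} N$ from hypothesis \eqref{hyp1} this yields $w_{L}(x) = O(e^{-c_{x}N})$ for some $c_{x} > 0$ depending on $x$, with the estimate being trivial at the endpoint $x = 1$.

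Next I would estimate the integral on $|y| < MN^{-m/2}$. The factor $|x-y|$ is bounded by an absolute constant there, and $\int w_{L}(y)\,dy$ is at most the total mass $\int_{-1}^{1} w_{L}(y)\,dy$, which by a direct computation from \eqref{weight} equals $(L/2)^{m/2}\,B(L/2,1/2)^{m/2} = O(L^{m/4})$, growing only polynomially. For the truncated series $f_{N-2,L}(xy)$ at $|xy| \leq MxN^{-m/2}$, I would use the Ginibre-type bound from \eqref{ginsumbnd} to write $|f_{N-2,L}(xy)| \leq f_{N-2}((1+\gamma)^{m}N^{m}|xy|) \leq f_{N-2}(cN^{m/2})$ and then apply Stirling's formula to $\sum_{n} t^{n}/(n!)^{m}$ at $t \sim N^{m/2}$: the dominant term is near $n \sim \sqrt{N}$, and the full sum is $e^{O(\sqrt{N})}$. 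Equivalently, since $|xy| \sim N^{-m/2} \gg ML^{-m}$ for large $N$, Proposition \ref{prop:fint} applies and gives $f_{\infty,L}(|xy|) \leq C(1 - |xy|^{1/m})^{-mL} = O(e^{c\sqrt{N}})$, which is consistent with the Stirling estimate.

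Putting these bounds together, the left-hand side of \eqref{smallvexp} is at most $O(e^{-c_{x}N}) \cdot O(N^{m/4}) \cdot O(e^{c\sqrt{N}}) = O(e^{-c'N})$ for any $0 < c' < c_{x}$ and $N$ sufficiently large. The main subtlety is that $f_{N-2,L}$ is genuinely polynomially large term-by-term at this microscopic scale, so a naive bound of the form $|f_{N-2,L}(xy)| \leq C$ is false; the terms of the series conspire to produce an $e^{O(\sqrt{N})}$ envelope, but this sub-exponential growth is comfortably dwarfed by the exponential decay of $w_{L}(x)$ for fixed $x$ bounded away from $1$, which is what makes the result go through.
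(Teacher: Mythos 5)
Your argument is correct but takes a genuinely different route from the paper. The paper splits the microscopic $y$-window into the two pieces $|y| < M_2 N^{-m}$ and $M_2 N^{-m} < |y| < M N^{-m/2}$. On the outer piece it invokes the machinery of Lemma \ref{lem:crudebnd}: combining \eqref{crudeweight}, the crude estimate \eqref{crudefint}, and the Gaussian bound \eqref{gaussbnd}, the integrand is controlled by $N^{c}e^{-N(u-v)^{2}}$ with $u = x^{1/m}$, and since $u$ is fixed while $v < M'N^{-1/2}$, the separation $u-v$ is bounded away from zero and the exponential decay comes from the interference term. On the inner piece $|y| < M_2 N^{-m}$, it observes that $L^{m}|xy|$ is bounded, so the Ginibre comparison \eqref{ginsumbnd} gives $f_{N-2,L}(xy) = O(1)$, and then \eqref{crudeweight} applied to $w_L(x)$ alone carries the exponential smallness. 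Your approach instead avoids the splitting and the Gaussian bound entirely: you bound $|f_{N-2,L}(xy)|$ uniformly on the whole window by $e^{O(\sqrt{N})}$ (via the Ginibre comparison and a Stirling estimate on $\sum_n t^n/(n!)^m$ with $t \sim N^{m/2}$), bound the $y$-integral of the weight by its total mass $O(L^{m/4})$, and let the single factor $w_L(x) = O(e^{-c_x N})$ crush everything. Both proofs are valid; the paper's is aligned with the machinery it reuses elsewhere (Lemma \ref{lem:crudebnd}), while yours is self-contained and arguably cleaner for this one estimate, at the cost of importing a Stirling calculation that the paper does not need here. One small imprecision in your write-up: the clause ``since $|xy| \sim N^{-m/2} \gg ML^{-m}$, Proposition \ref{prop:fint} applies'' is only accurate for $y$ near the top of the window, since $|xy|$ can be arbitrarily small as $y \to 0$; your primary route through $f_{N-2}(cN^{m/2})$ and Stirling is monotone in $|xy|$ and does cover the whole range, so the argument stands, but the ``equivalently'' phrasing should be read as a consistency check on the magnitude rather than a complete alternative.
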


\begin{proof}
We start with the case that $M_{2}N^{-m} < y < MN^{-\frac{m}{2}}$ for a large fixed $M_{2}>0$. We bound $|x-y| \leq 2$ and $|f_{N-2,L}(xy)| \leq f_{\infty,L}(|xy|)$. In this sense the sign of $y$ is irrelevant and we assume $y>0$. Then we mimic the proof of Lemma \ref{lem:crudebnd} noting that
\begin{equation}
\bigg{|}w_{L}(x)\int_{M_{2}N^{-m}}^{MN^{-\frac{m}{2}}}dy\,w_{L}(y)f_{N-2,L}(xy) \bigg{|}\leq N^{c}\int_{0}^{MN^{-\frac{1}{2}}}dv\,w_{L}(v^{m})e^{-N(u-v)^{2}}
\end{equation}
where $x = u^{m}$. Since $x>0$ is fixed, so is $u>0$, and we have that $u-v$ is uniformly bounded away from zero on this range of $v$ and \eqref{smallvexp} follows. When $0 < y < M_{2}N^{-m}$ we use \eqref{ginsumbnd} so that
\begin{equation}
f_{N-2,L}(|xy|) \leq f_{N-2}(cL^{m}|xy|) \leq f_{\infty}(c|xy|L^{m}) < C,
\end{equation}
where the last bound follows from the fact that for $x>0$ fixed, $|xy|L^{m}$ is bounded and $f_{\infty}$ has infinite radius of convergence. Now \eqref{smallvexp} follows from \eqref{crudeweight}.
\end{proof}

\begin{lemma}
For any fixed $x>0$ with $x \neq \sqrt{\tilde{\alpha}}$, we have the following estimates as $N \to \infty$,
\begin{align}
&w_{L}(x)\int_{|y|>MN^{-\frac{m}{2}}}dy\,|x-y|w_{L}(y)\,e_{N,m}\frac{(xy)^{N-1}}{xy-\alpha^{m}} = O(e^{-cN}), \label{jdens2}\\
&w_{L}(x)\int_{y < -MN^{-\frac{m}{2}}}dy\,|x-y|w_{L}(y)f_{\infty,L}(xy) = O(e^{-cN}), \label{iminusx}
\end{align}
Furthermore, for any $x > (\tilde{\alpha})^{\frac{m}{2}}$, we have $\rho_{N}(x) = O(e^{-cN})$ and 
\begin{equation}
\rho_{N}(x) = \frac{m}{x^{1-\frac{1}{m}}}\,\int_{u-\epsilon}^{u+\epsilon}dv\,|u^{m}-v^{m}|w_{L}(v^{m})w_{L}(u^{m})f_{\infty}((uv)^{m})\,(uv)^{m-1} + O(e^{-cN}). \label{deltanbhd}
\end{equation}
\end{lemma}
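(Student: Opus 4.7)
The plan is to mirror the arguments of Lemma \ref{lem:2ndterm}, now for a fixed $x$ rather than integrating over $x$. The key new observation is that for $x$ fixed and away from the critical point $x = \tilde{\alpha}^{m/2}$ of the profile $w_L(x) x^{N-1}$, removing the $x$-integration from the Laplace argument trades the $O(N^{-1/2})$ factors coming from the saddle in $x$ for an exponentially small gain, which is the source of the claimed $O(e^{-cN})$ decay.

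For \eqref{jdens2}, I would substitute the weight asymptotic \eqref{uniformweight} into both $w_L(x)$ and $w_L(y)$ and apply a one-dimensional Laplace approximation to the $y$-integral. Its critical points sit at $|y| = \tilde{\alpha}^{m/2}$ exactly as in Lemma \ref{lem:2ndterm}, and the resulting saddle-point prefactor combines with $e_{N,m}$ from \eqref{enm} to produce an expression of size $N^{c} \cdot w_L(x) x^{N-1}$. Since $x$ is fixed with $x \neq \tilde{\alpha}^{m/2}$, this last factor is exponentially smaller than its maximum value (attained at $x = \tilde{\alpha}^{m/2}$), supplying the required $e^{-cN}$ improvement. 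For \eqref{iminusx} I follow the argument used for the set $E_4$ in Lemma \ref{lem:2ndterm}: changing $y = -v^m$ and using \eqref{negxbnd} together with \eqref{negvest}, the integrand is bounded by $N^{c} e^{-\delta L(u^2 + v^2)}$ with $u = x^{1/m}$, and integrating in $v$ yields a prefactor $e^{-\delta L u^2} = O(e^{-cN})$ under hypothesis \eqref{hyp1}.

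For the two statements about $\rho_N(x)$, I would combine \eqref{jdens2} and \eqref{iminusx} to discard the second term in the decomposition \eqref{tfintest} of $f_{N-2,L}$ and the entire $y < 0$ contribution, reducing $\rho_N(x)$ to an integral supported on $y \in [MN^{-m/2}, 1]$ with $xy < (\alpha - N^{-1/2})^m$. On this set the Gaussian-type bound \eqref{gaussbnd}, applied as in Lemma \ref{lem:crudebnd}, shows the integrand decays like $N^{c} e^{-cN(u-v)^2}$, localizing the integral to a neighbourhood $|v - u| < \epsilon$ of the diagonal up to an $O(e^{-cN})$ error. The change of variables $y = v^m$ produces the Jacobian $m v^{m-1}$ and rearranging gives the explicit form \eqref{deltanbhd}. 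When $x > \tilde{\alpha}^{m/2}$, the truncation constraint $uv < \alpha - N^{-1/2}$ forces $v < \sqrt{\alpha} < u$, so $|u - v|$ is uniformly bounded below on the entire integration domain, and the Gaussian estimate immediately yields $\rho_N(x) = O(e^{-cN})$.

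The main obstacle is the bookkeeping in \eqref{jdens2}: the very large prefactor $e_{N,m}$ in \eqref{tfintest} is arranged so as to exactly cancel the critical-value growth of $w_L(y) y^{N-1}$ evaluated at $y = \tilde{\alpha}^{m/2}$, with all improvement forced to come from the pointwise factor $w_L(x) x^{N-1}$ being off its maximum. Tracking the $N$-dependent powers and verifying that they combine cleanly into polynomial-times-$e^{-cN}$ is essentially the computation underlying the $O(1)$ bound in the proof of Lemma \ref{lem:2ndterm}, adapted from a two-dimensional saddle to a one-dimensional one; everything else is routine.
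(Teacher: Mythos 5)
Your proposal is correct and follows essentially the same approach as the paper's own (very terse) proof: discard the $|y|<MN^{-m/2}$ region via the previous lemma, then for each remaining piece reuse the crude bounds \eqref{crudeweight}, \eqref{crudefint}, \eqref{negxbnd} and the Gaussian bound \eqref{gaussbnd}, noting that with the $x$-integration removed, being pointwise off the saddle $x=\tilde\alpha^{m/2}$ converts the $O(1)$ and $O(N^{-1/2})$ factors of Lemma~\ref{lem:2ndterm} into $O(e^{-cN})$. Your identification of the Jacobian in \eqref{deltanbhd} and the observation that the truncation $uv<\alpha-N^{-1/2}$ forces $|u-v|$ uniformly away from zero when $x>\tilde\alpha^{m/2}$ both match the paper's reasoning.
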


\begin{proof}
Since the order of magnitude on the right-hand side of these bounds is at the exponentially small scale, it suffices to make use of the crude estimates \eqref{crudeweight}, \eqref{crudefint} and \eqref{gaussbnd} (or \eqref{negxbnd} if $y<0$). For example, the estimate \eqref{jdens2} follows by repeating the first steps in the proof of Lemma \ref{lem:2ndterm}, noting that since $x$ is strictly away from the critical point located at $x=(\alpha)^{\frac{m}{2}}$ we only obtain exponentially small contributions, as explained below \eqref{localize2ndterm}. For \eqref{iminusx} we repeat the estimates in \eqref{negvest}. Because the integral over $u$ is absent and $u = x^{\frac{1}{m}} > 0$ is fixed we again obtain exponentially small contributions for \eqref{iminusx}. Next, when $x > (\tilde{\alpha})^{\frac{m}{2}}$, the indicator function in \eqref{tfintest} shows that there exists $\delta>0$ independent of $N$ such that $y <  \tilde{\alpha}^{\frac{m}{2}}-\delta$. Hence $|y-x|$ is uniformly bounded away from zero and $\rho_{N}(x) = O(e^{-cN})$ follows as in Lemma \ref{lem:crudebnd}. The estimate \eqref{deltanbhd} follows similarly.
\end{proof}

\begin{corollary}
If $0 < x < (\tilde{\alpha})^{\frac{m}{2}}$, we have
\begin{equation}
\lim_{N \to \infty}\frac{\rho_{N}(x)}{\mathbb{E}(N^{(m)}_{\mathbb{R}})} = \frac{1}{2m\,\mathrm{arctanh}(\sqrt{\tilde{\alpha}})}\,\frac{1}{x^{1-\frac{1}{m}}(1-x^{\frac{2}{m}})}. \label{limitdens}
\end{equation}
\end{corollary}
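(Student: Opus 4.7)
The plan is to reduce the $y$-integration in $\rho_{N}(x) = w_{L}(x)\int_{-1}^{1} dy\,|x-y|w_{L}(y)f_{N-2,L}(xy)$ to a small neighborhood of $y = x$ and then run the saddle-point argument from the proof of Lemma \ref{lem:critasympt}, but now with the outer $u$-integration suppressed because $x$ is fixed. For $x \in (0,\tilde{\alpha}^{m/2})$, the two preceding Lemmas of Section \ref{sec:convdens} together with the crude bound of Lemma \ref{lem:crudebnd} show that the following contributions to $\rho_{N}(x)$ are all $O(e^{-cN})$: the negative-$y$ part, the microscopic layer $|y|<MN^{-m/2}$, the second term in the decomposition \eqref{tfintest} of $f_{N-2,L}$, and the portion where $|y-x|$ exceeds a fixed $\epsilon>0$. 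After setting $y = v^{m}$, $x = u^{m}$ with $u = x^{1/m}$, this leaves exactly the right-hand side of \eqref{deltanbhd} restricted to $v \in (u-\epsilon,u+\epsilon)$.

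Next I would substitute the uniform asymptotics \eqref{uniformweight} and \eqref{fintasytru} into this localized integrand. Because $u$ is a fixed positive constant and $v$ lies in a fixed compact subinterval of $(0,1)$, the remainders in those Propositions are uniformly $O(N^{-1})$ in $v$. The powers of $u$ and $v$ coming from these expansions recombine with the $(uv)^{m-1}$ factor in \eqref{deltanbhd} so that the integrand factorizes cleanly as
\begin{equation*}
d_{L,m}^{2}\,L^{-(m-1)/2}(2\pi)^{-(m-1)/2}m^{-1/2}\,\cdot\,\frac{|u^{m}-v^{m}|}{(uv)^{(m-1)/2}}\,\cdot\,T_{L}(u,v)\,\bigl(1+O(N^{-1})\bigr),
\end{equation*}
with $T_{L}$ as defined in \eqref{trunckern}.

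The saddle-point step now proceeds in parallel with Lemma \ref{lem:critasympt}. Changing variables $v = u + w/\sqrt{N}$, using \eqref{qlimit} to identify the limit $\sqrt{N}\,|u^{m}-(u+w/\sqrt{N})^{m}|/(u(u+w/\sqrt{N}))^{(m-1)/2} \to m|w|$, and applying dominated convergence (with the Gaussian bound on $T_{L}$ and the polynomial bound on $\sqrt{N}\,Q$ already established in Section \ref{sec:leading}), the $v$-integral reduces to
\begin{equation*}
\frac{1}{N}\int_{-\infty}^{\infty}m|w|\,\frac{e^{-\frac{m\tilde{\gamma}w^{2}}{2(1-u^{2})^{2}}}}{(1-u^{2})^{3}}\,dw \;=\; \frac{2}{N\,\tilde{\gamma}\,(1-u^{2})}.
\end{equation*}

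Finally, the pre-factor collapses via elementary bookkeeping: $d_{L,m}^{2}\,L^{-(m-1)/2} \sim \frac{(2\pi)^{m/2}L^{3/2}}{4\pi m}$ by \eqref{dLm}, and $L\sim \tilde{\gamma}N$, so after multiplying by $m/x^{1-1/m} = m/u^{m-1}$ from \eqref{deltanbhd} all explicit powers of $L$ and the $(2\pi)^{(m-1)/2}$ cancel down to $\sqrt{\tilde{\gamma}N/(2\pi m)}$. Together with $u^{2} = x^{2/m}$ this gives $\rho_{N}(x) \sim \sqrt{\tilde{\gamma}N/(2\pi m)}\,x^{-(1-1/m)}(1-x^{2/m})^{-1}$. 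Dividing by $\mathbb{E}(N^{(m)}_{\mathbb{R}}) \sim \sqrt{2m\tilde{\gamma}N/\pi}\,\mathrm{arctanh}(\sqrt{\tilde{\alpha}})$ from Theorem \ref{th:mainthm} produces the normalization $1/(2m\,\mathrm{arctanh}(\sqrt{\tilde{\alpha}}))$ and hence \eqref{limitdens}. The main obstacle is purely the bookkeeping of powers of $L$ and $N$ arising from $d_{L,m}^{2}$, the pre-factor of $f_{\infty,L}$, and the two Jacobians $dy = mv^{m-1}dv$ and $dv = N^{-1/2}dw$; the genuinely analytic content — the uniform asymptotics, the Gaussian localization, and the dominated-convergence bounds — has already been packaged in the Propositions and in the proof of Lemma \ref{lem:critasympt}.
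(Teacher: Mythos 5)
Your proof is correct and follows essentially the same route the paper intends: localize to a $v$-neighbourhood of $u = x^{1/m}$ via \eqref{deltanbhd}, insert the uniform asymptotics of Propositions \ref{prop:uniformweight} and \ref{prop:fint} (now with genuinely uniform $O(N^{-1})$ errors because $u$ and $v$ are bounded away from $0$ and $1$), and rerun the Gaussian saddle-point computation from Section \ref{sec:leading} with the outer $u$-integral removed. I have checked the constant bookkeeping — the factorization $d_{L,m}^{2}L^{-(m-1)/2} \sim (2\pi)^{m/2}L^{3/2}/(4\pi m)$, the Gaussian moment $\int_{\mathbb{R}}m|w|e^{-m\tilde\gamma w^{2}/(2(1-u^{2})^{2})}(1-u^{2})^{-3}\,dw = 2/(\tilde\gamma(1-u^{2}))$, and the final ratio against $\mathbb{E}(N^{(m)}_{\mathbb{R}})$ — and it all collapses to the claimed $1/(2m\,\mathrm{arctanh}(\sqrt{\tilde\alpha}))\cdot x^{-(1-1/m)}(1-x^{2/m})^{-1}$. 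The paper's stated proof of this corollary is a single pointer to \eqref{deltanbhd} and the computation leading to \eqref{arctanhderiv}; you have simply carried out the details that pointer refers to.
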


\begin{proof}
This follows from the estimate \eqref{deltanbhd} and repeating the steps in Section \ref{sec:leading}, starting at \eqref{prequadapprox} and leading to \eqref{arctanhderiv}.
\end{proof}

\section{Regime of weak non-orthogonality}
\label{sec:weakregime}
In this section we will consider the opposite case of \eqref{hyp1}, that is where $L$ is a \textit{fixed} positive integer, while the size of the original orthogonal matrix from which we truncate grows to infinity with $N$. In this setting the matrices in the product are very close to being orthogonal, having had just a few rows or columns removed. 

We will employ a slightly different approach based on earlier work of the third author \cite{S17} and on an alternative exact formula for the expected number of real eigenvalues in \cite{FIK20}. Specifically, it is shown there that the double integral in \eqref{iln} can be computed as,
\begin{equation}
\mathbb{E}(N^{(m)}_{\mathbb{R}}) = 2q_{L,m}\sum_{j=0}^{N-2}\binom{L+j}{L}^{m}(-1)^{j}g_{j}, \qquad q_{L,m} = \left(\frac{L\Gamma(\frac{L}{2})\Gamma(\frac{L+1}{2})}{2\sqrt{\pi}}\right)^{m}, \label{fikform}
\end{equation}
where $g_{j}$ is a particular instance of a Meijer G-function,
\begin{equation}
g_{j} = \MeijerG*{m+1}{m}{2m+1}{2m+1}{\frac{1}{2},\ldots,\frac{1}{2};\frac{L}{2}+j+1,\ldots,\frac{L}{2}+j+1,\lceil \frac{j}{2} \rceil+1}{\lceil \frac{j}{2} \rceil,j+1,\ldots,j+1;\frac{1-L}{2},\ldots,\frac{1-L}{2}}{1}.
\end{equation}
We refer to \cite{FIK20} and references therein for the precise definition of the Meijer G-function and its basic properties. What is needed here is the following contour integral representation which follows directly from the definition,
\begin{equation}
g_{j} = \frac{1}{2\pi i}\int_{\gamma}\,\left(\frac{\Gamma(\frac{1}{2}+s)\Gamma(j+1-s)}{\Gamma(\frac{L+1}{2}+s)\Gamma(\frac{L}{2}+1+j-s)}\right)^{m}\,\frac{1}{\lceil \frac{j}{2} \rceil-s}\,ds, \label{gjcont}
\end{equation}
where we may take the contour as the imaginary axis, $\gamma = i\mathbb{R}$. The Meijer G-function coefficients can alternatively be written as a real integral as follows.
\begin{lemma}
\begin{equation}
g_{j} = \frac{1}{\Gamma\left(\frac{L}{2}\right)^{2m}}\,\int_{[0,1]^{2m}}\prod_{\ell=1}^{m}(1-t_{\ell})^{\frac{L}{2}-1}(1-r_{\ell})^{\frac{L}{2}-1}t_{\ell}^{\lceil \frac{j}{2} \rceil-\frac{1}{2}}r_{\ell}^{j-\lceil \frac{j}{2} \rceil}\,\mathbbm{1}_{r_{1} \ldots r_{m} > t_{1} \ldots t_{m}}\,d\vec{t}\,d\vec{r}. \label{gjrep}
\end{equation}
\end{lemma}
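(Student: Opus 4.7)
The plan is to expand each ratio of Gamma functions in \eqref{gjcont} as a Beta integral over $[0,1]$, interchange the order of integration, and then evaluate the resulting contour integral by residues. The starting observation is that, for $\mathrm{Re}(s)\in(-\tfrac{1}{2},j+1)$, the standard Beta identities give
\[\frac{\Gamma(\tfrac{1}{2}+s)}{\Gamma(\tfrac{L+1}{2}+s)}=\frac{1}{\Gamma(\tfrac{L}{2})}\int_0^1 t^{s-1/2}(1-t)^{L/2-1}\,dt,\qquad \frac{\Gamma(j+1-s)}{\Gamma(\tfrac{L}{2}+1+j-s)}=\frac{1}{\Gamma(\tfrac{L}{2})}\int_0^1 r^{j-s}(1-r)^{L/2-1}\,dr.\]
Raising each to the $m$-th power produces an $m$-fold product of such integrals over variables $t_1,\ldots,t_m$ and $r_1,\ldots,r_m$.

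Substituting these into \eqref{gjcont}, the entire $s$-dependence of the integrand collapses to a single factor $(T/R)^s$, where $T:=t_1\cdots t_m$ and $R:=r_1\cdots r_m$. After exchanging the $s$-integral with the $[0,1]^{2m}$ integral (justified below), the inner contour integral I need to evaluate is $\frac{1}{2\pi i}\int_\gamma (T/R)^s (\lceil j/2\rceil-s)^{-1}\,ds$. This integrand is meromorphic in $s$ with a single simple pole at $s=\lceil j/2\rceil$, lying to the right of $\gamma=i\mathbb{R}$, with residue $-(T/R)^{\lceil j/2\rceil}$. When $T<R$ the factor $(T/R)^s$ decays as $\mathrm{Re}(s)\to+\infty$, so I close the contour to the right and pick up the pole with clockwise orientation to obtain $(T/R)^{\lceil j/2\rceil}$; when $T>R$ I close to the left and obtain zero. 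Thus the inner contour integral equals $(T/R)^{\lceil j/2\rceil}\mathbbm{1}_{T<R}$. Distributing $(T/R)^{\lceil j/2\rceil}=\prod_\ell t_\ell^{\lceil j/2\rceil}r_\ell^{-\lceil j/2\rceil}$ over the remaining weight $\prod_\ell t_\ell^{-1/2}(1-t_\ell)^{L/2-1}r_\ell^j(1-r_\ell)^{L/2-1}$, and rewriting $\mathbbm{1}_{T<R}=\mathbbm{1}_{r_1\cdots r_m>t_1\cdots t_m}$, then reproduces \eqref{gjrep} exactly.

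The main technical obstacle will be justifying the interchange of the contour integral with the $2m$-fold real integral. The contour integral \eqref{gjcont} converges absolutely on $\gamma$, since Stirling gives decay $|s|^{-mL-1}$, but once the Gamma ratios are opened up into Beta integrals the only $s$-decay left is the $|\Im s|^{-1}$ coming from $(\lceil j/2\rceil-s)^{-1}$, which is not absolutely integrable. I would handle this by truncating the contour at $|\Im s|\leq A$, which makes Fubini immediate on the bounded region, and then letting $A\to\infty$. To control the limit one can integrate by parts in $s$ to obtain a bound on the truncated inner integral that is uniform in $(\vec t,\vec r)$ and integrable against the Beta weights, after which the residue calculation above delivers the pointwise limit and dominated convergence closes the argument.
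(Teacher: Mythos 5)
Your proof takes the same route as the paper: expand each Gamma ratio as a Beta integral (your two formulas are exactly the paper's \eqref{betaintreps}), swap the order of integration, and recognize the remaining $s$-integral as the inverse Mellin transform of an indicator. The only presentational difference is that you derive the Mellin pair by closing the contour and computing the residue at $s=\lceil j/2\rceil$, whereas the paper simply quotes it; your residue calculation (close right for $T<R$, left for $T>R$, clockwise orientation giving $+(T/R)^{\lceil j/2\rceil}$) is correct. Your observation that the interchange is not a direct Fubini application — after opening the Gamma ratios the only $s$-decay left is $|\Im s|^{-1}$, which is not absolutely integrable — is a genuine point that the paper passes over silently, so you are being more careful than the source. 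One caveat on your proposed fix: integrating by parts in $\tau=\Im s$ produces a factor $1/\log(T/R)$, which blows up near the hypersurface $\prod t_\ell = \prod r_\ell$; the resulting bound is only logarithmically integrable at best, so the dominated-convergence step needs a little extra care (for instance, splitting $\frac{1}{\lceil j/2\rceil - i\tau}$ into its absolutely integrable real part and a conditionally convergent Hilbert-transform-type piece, or performing the $s$-swap before fully opening all $m$ Gamma ratios so that rapid decay in $s$ is retained). The conclusion is nonetheless correct and the strategy is sound.
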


\begin{proof}
We have the integral representations
\begin{equation}
\begin{split}
\left(\frac{\Gamma\left(\frac{1}{2}+s\right)}{\Gamma(\frac{1+L}{2}+s)}\right)^{m} &= \frac{1}{\Gamma\left(\frac{L}{2}\right)^{m}}\,\int_{[0,1]^{m}}d\vec{t}\,\prod_{\ell=1}^{m}t_{\ell}^{s-\frac{1}{2}}(1-t_{\ell})^{\frac{L}{2}-1},\\
\left(\frac{\Gamma(j+1-s)}{\Gamma(\frac{L}{2}+1+j-s)}\right)^{m} &= \frac{1}{\Gamma\left(\frac{L}{2}\right)^{m}}\,\int_{[0,1]^{m}}d\vec{r}\,\prod_{\ell=1}^{m}r_{\ell}^{j-s}(1-r_{\ell})^{\frac{L}{2}-1}. \label{betaintreps}
\end{split}
\end{equation}
Then inserting \eqref{betaintreps} into \eqref{gjcont} and interchanging the order of integration, the proof is complete after noting the inverse Mellin transform
\begin{equation}
\frac{1}{2\pi i}\int_{\gamma}\left(\prod_{\ell=1}^{m}\frac{t_{\ell}}{r_{\ell}}\right)^{s}\,\frac{ds}{\lceil \frac{j}{2} \rceil-s} = \left(\prod_{\ell=1}^{m}\frac{t_{\ell}}{r_{\ell}}\right)^{\lceil \frac{j}{2} \rceil}\,\mathbbm{1}_{r_{1} \ldots r_{m} > t_{1} \ldots t_{m}}.
\end{equation}
\end{proof}

\begin{lemma}
\label{lem:evenoddest}
We have the following estimates for even and odd indices:
\begin{equation}
\label{gjexpan}
\begin{split}
j^{mL}g_{2j} &= j^{mL}g_{j}^{\mathrm{sym}} + \frac{A_{L,m}}{j} + o\left(\frac{1}{j}\right), \qquad j \to \infty\\
j^{mL}g_{2j+1} &= j^{mL}g_{j}^{\mathrm{sym}}  - \frac{A_{L,m}}{j} + o\left(\frac{1}{j}\right), \qquad j \to \infty.
\end{split}
\end{equation}
where
\begin{equation}
g_{j}^{\mathrm{sym}} = \frac{1}{2}\left(\frac{\Gamma(j+1)}{\Gamma(j+1+\frac{L}{2})}\right)^{2m} \label{gjsym}
\end{equation}
and
\begin{equation}
A_{L,m} = \frac{1}{\Gamma\left(\frac{L}{2}\right)^{2m}}\int_{\mathbb{R}_{+}^{2m}}d\vec{t}\,d\vec{r}\,\prod_{\ell=1}^{m}t_{\ell}^{\frac{L}{2}-1}r_{\ell}^{\frac{L}{2}-1}e^{-t_{\ell}-r_{\ell}}\,\frac{t_{1}+\ldots+t_{m}}{2}\,\mathbbm{1}_{r_{1}+\ldots+r_{m} < t_{1}+\ldots+t_{m}}. \label{alm}
\end{equation}
\end{lemma}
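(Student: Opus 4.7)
The plan is to work directly with the real integral representation \eqref{gjrep} and expose the two expansions \eqref{gjexpan} as the leading correction to a common symmetrisation of the integrand.

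First I would observe that $g_j^{\mathrm{sym}}$ in \eqref{gjsym} arises as the value of the integral \eqref{gjrep} when the asymmetric exponents $\lceil j/2\rceil-\tfrac{1}{2}$ and $j-\lceil j/2\rceil$ are replaced by the symmetric pair $(j,j)$: the integrand then becomes invariant under $(\vec{t},\vec{r})\leftrightarrow(\vec{r},\vec{t})$, so the indicator $\mathbbm{1}_{\prod r>\prod t}$ contributes a factor of $\tfrac{1}{2}$ and the remaining integral factorises into $2m$ Beta integrals, giving $\tfrac{1}{2}\Gamma(L/2)^{-2m}B(j+1,L/2)^{2m}=g_j^{\mathrm{sym}}$. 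Since $\lceil 2j/2\rceil=j$ and $\lceil(2j+1)/2\rceil=j+1$, factoring out the common $\prod_\ell t_\ell^j$ then yields, for $\varepsilon\in\{0,1\}$,
\begin{equation*}
g_{2j+\varepsilon}-g_j^{\mathrm{sym}}=\frac{1}{\Gamma(L/2)^{2m}}\int_{[0,1]^{2m}}\prod_\ell (1-t_\ell)^{L/2-1}(1-r_\ell)^{L/2-1}(t_\ell r_\ell)^j\Bigl(\prod_\ell t_\ell^{\varepsilon-1/2}-1\Bigr)\mathbbm{1}_{\prod r>\prod t}\,d\vec{t}\,d\vec{r}.
\end{equation*}

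Next I would apply the concentration change of variables $t_\ell=1-s_\ell/j$, $r_\ell=1-p_\ell/j$. The Jacobian combined with the Beta weights produces an overall factor $j^{-mL}\prod_\ell s_\ell^{L/2-1}p_\ell^{L/2-1}$. The key point is the Taylor expansion
\begin{equation*}
\prod_\ell(1-s_\ell/j)^{\varepsilon-1/2}-1=(-1)^\varepsilon\frac{\sum_\ell s_\ell}{2j}+O(1/j^2),
\end{equation*}
which shows that this multiplier is itself of order $1/j$, so only the leading-order behaviour $(t_\ell r_\ell)^j\to e^{-s_\ell-p_\ell}$ of the remaining factor and the leading-order indicator $\mathbbm{1}_{\prod r>\prod t}\to\mathbbm{1}_{\sum p<\sum s}$ are needed to capture the $1/j$ term. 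Passing to the limit by dominated convergence gives
\begin{equation*}
\lim_{j\to\infty} j\cdot j^{mL}(g_{2j+\varepsilon}-g_j^{\mathrm{sym}})=\frac{(-1)^\varepsilon}{\Gamma(L/2)^{2m}}\int_{\mathbb{R}_+^{2m}}\prod_\ell s_\ell^{L/2-1}p_\ell^{L/2-1}e^{-s_\ell-p_\ell}\,\frac{\sum_\ell s_\ell}{2}\,\mathbbm{1}_{\sum p<\sum s}\,d\vec{s}\,d\vec{p},
\end{equation*}
and the right-hand side equals $(-1)^\varepsilon A_{L,m}$ by \eqref{alm} after renaming $\vec{s}\mapsto\vec{t}$, $\vec{p}\mapsto\vec{r}$. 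Specialising to $\varepsilon=0$ and $\varepsilon=1$ yields both halves of \eqref{gjexpan}.

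Two technical points require care. The dominated convergence argument needs a uniform integrable envelope; this is supplied by Bernoulli's inequality $(1-s_\ell/j)^{j+\varepsilon-1/2}\le e^{-s_\ell/2}$, valid for all $j\ge 2$ and $s_\ell\in[0,j]$, together with the analogous bound for $p_\ell$, producing an envelope of the form $\prod_\ell s_\ell^{L/2-1}p_\ell^{L/2-1}(1+\sum_\ell s_\ell)e^{-(s_\ell+p_\ell)/2}$ which is integrable on $\mathbb{R}_+^{2m}$. The main obstacle I anticipate is the rigorous control of the indicator: the exact constraint $\prod(1-p_\ell/j)>\prod(1-s_\ell/j)$ differs from the linearised $\sum p_\ell<\sum s_\ell$ on a curved tube of thickness $O(1/j)$ around the boundary, and one must verify that the $L^1$-mass of this discrepancy against the exponential weight is $O(1/j)$, so that combined with the $O(1/j)$ multiplier $\prod_\ell t_\ell^{\varepsilon-1/2}-1$ it contributes only $O(1/j^2)$, which is absorbed into the $o(1/j)$ error in \eqref{gjexpan}.
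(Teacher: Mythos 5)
Your proposal is correct and follows essentially the same approach as the paper's proof: split the integrand of \eqref{gjrep} into a $(\vec t,\vec r)$-symmetric part that evaluates to $g_j^{\mathrm{sym}}$ via the factor $\tfrac12$ and explicit Beta integrals, plus the perturbation $\prod_\ell t_\ell^{\varepsilon-1/2}-1$; then rescale $t_\ell=1-s_\ell/j$, $r_\ell=1-p_\ell/j$ and pass to the limit by dominated convergence using \eqref{pertlimit}. Your discussion of the dominating envelope and the indicator is more detailed than what the paper records (and the indicator concern is actually handled automatically, since the pointwise limit fails only on the measure-zero set $\{\sum p_\ell=\sum s_\ell\}$), but the underlying argument is the same.
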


\begin{proof}
We first prove the expansion of $g_{2j}$ in \eqref{gjexpan}. In the integral representation \eqref{gjrep}, we replace the term $\prod_{\ell=1}^{m}t_{\ell}^{-\frac{1}{2}}$ with $1+(\prod_{\ell=1}^{m}t_{\ell}^{-\frac{1}{2}}-1)$, considering the second term as a perturbation. The contribution of the main term in \eqref{gjrep} leads to an integral with permutation invariance between the $r_{\ell}$ and $t_{\ell}$ variables. Hence the indicator function can be dropped after multiplying by $\frac{1}{2}$. The integrals are then explicit and this gives \eqref{gjsym}. In the perturbation term, we change coordinates $t_{\ell} \to 1-\frac{t_{\ell}}{j}$ and $r_{\ell} \to 1-\frac{r_{\ell}}{j}$ and apply the dominated convergence theorem to take the limit $j \to \infty$ inside the integrals, using that
\begin{equation}
\lim_{j \to \infty}j\left(\prod_{\ell=1}^{m}\left(1-\frac{t_{\ell}}{j}\right)^{-\frac{1}{2}}-1\right) = \frac{t_{1}+\ldots+t_{m}}{2}. \label{pertlimit}
\end{equation}
When $j$ is odd the procedure is exactly the same, except the term that breaks the symmetry is $1+(\prod_{\ell=1}^{m}t_{\ell}^{\frac{1}{2}}-1)$. Then we take the same limit in \eqref{pertlimit} except the exponent is $+\frac{1}{2}$ and this results in an overall minus sign on the right-hand side of \eqref{pertlimit}. This completes the proof of the expansions \eqref{gjexpan}.
\end{proof}

\begin{lemma}
For any $L, m$ fixed, we have as $N \to \infty$,
\begin{equation}
\mathbb{E}(N^{(m)}_{\mathbb{R}}) \sim \log(N)\,2q_{L,m}\,\frac{2^{mL}}{(L!)^{m}}\,\left(2A_{L,m}-\frac{mL}{4}\right). \label{logNlemma}
\end{equation}
\end{lemma}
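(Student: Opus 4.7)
The plan is to extract the logarithmic contribution from the alternating sum in \eqref{fikform} by pairing consecutive terms and applying the even/odd asymptotics of Lemma \ref{lem:evenoddest}. Writing $a_{j} := \binom{L+j}{L}^{m}$ and $b_{j} := g_{j}$, the sum
\begin{equation*}
S_{N} := \sum_{j=0}^{N-2}(-1)^{j}a_{j}b_{j} = \sum_{k=0}^{\lfloor N/2\rfloor - 1}\left(a_{2k}b_{2k} - a_{2k+1}b_{2k+1}\right) + (\text{boundary})
\end{equation*}
will be rewritten via the algebraic identity $a_{2k}b_{2k}-a_{2k+1}b_{2k+1} = a_{2k}(b_{2k}-b_{2k+1}) + (a_{2k}-a_{2k+1})b_{2k+1}$, so that the cancellation inherent in the alternating series is built in explicitly. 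The boundary term at $j=N-2$ is of order $a_{N}b_{N}$, which by Stirling and \eqref{gjsym} behaves like $N^{mL}\cdot N^{-mL} = O(1)$ and is therefore negligible compared to $\log N$.

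For the first summand, Lemma \ref{lem:evenoddest} gives $k^{mL}(b_{2k}-b_{2k+1}) = 2A_{L,m}/k + o(1/k)$, while elementary expansion of $\binom{L+2k}{L}$ in powers of $1/k$ yields $a_{2k} = (2k)^{mL}(L!)^{-m}(1+O(1/k))$. Multiplying,
\begin{equation*}
a_{2k}(b_{2k}-b_{2k+1}) = \frac{2^{mL}\,2A_{L,m}}{(L!)^{m}\,k} + o(1/k), \qquad k \to \infty,
\end{equation*}
which sums to $\frac{2^{mL}\cdot 2A_{L,m}}{(L!)^{m}}\log N + O(1)$. For the second summand I would compute the ratio $a_{2k+1}/a_{2k} = \bigl(\tfrac{2k+L+1}{2k+1}\bigr)^{m} = 1+\tfrac{mL}{2k}+O(1/k^{2})$, so $a_{2k}-a_{2k+1} = -\tfrac{mL\cdot 2^{mL-1}k^{mL-1}}{(L!)^{m}}(1+O(1/k))$. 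Combining with $b_{2k+1} \sim g_{k}^{\mathrm{sym}} \sim \tfrac{1}{2}k^{-mL}$ from \eqref{gjsym} and Stirling, one obtains
\begin{equation*}
(a_{2k}-a_{2k+1})b_{2k+1} = -\frac{mL\cdot 2^{mL}}{4(L!)^{m}\,k} + o(1/k),
\end{equation*}
which sums to $-\tfrac{mL\cdot 2^{mL}}{4(L!)^{m}}\log N + O(1)$. Adding the two contributions and multiplying by $2q_{L,m}$ gives precisely \eqref{logNlemma}.

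The step requiring most care will be justifying that the $o(1/k)$ remainders in the even/odd expansions of Lemma \ref{lem:evenoddest}, when multiplied by $a_{2k} \asymp k^{mL}$, produce error terms that sum to at most $O(\log N)\cdot o(1) = o(\log N)$ rather than contributing at the leading order. To handle this one needs a quantitative version of the dominated convergence argument used in the proof of Lemma \ref{lem:evenoddest}: after the change of variables $t_{\ell}\to 1-t_{\ell}/j$, $r_{\ell}\to 1-r_{\ell}/j$, the error in \eqref{pertlimit} is $O(1/j^{2})$ uniformly on compact sets, and the contribution from $t_{\ell},r_{\ell}\gtrsim j^{1/2}$ is exponentially small by the weight $(1-t_\ell/j)^{L/2-1}$. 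This upgrades the $o(1/k)$ to $O(1/k^{2})$, whose sum is $O(1)$ and thus harmless. The analogous $O(1/k^{2})$ correction to the binomial asymptotics is straightforward since $\binom{L+j}{L}$ is an explicit polynomial in $j$. With these uniform remainder bounds in place, the asymptotic equivalence \eqref{logNlemma} follows.
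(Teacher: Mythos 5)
Your proposal is correct and follows essentially the same path as the paper: both pair the even and odd terms of the alternating sum in \eqref{fikform}, invoke the even/odd asymptotics of Lemma~\ref{lem:evenoddest} together with the binomial expansion, and observe that the leading constant parts cancel, leaving a summand of order $1/k$ that integrates to $\log N$. Where you write the paired summand via the discrete Leibniz identity $a_{2k}b_{2k}-a_{2k+1}b_{2k+1}=a_{2k}(b_{2k}-b_{2k+1})+(a_{2k}-a_{2k+1})b_{2k+1}$, the paper instead writes the decomposition $T_{\mathrm{even}}-T_{\mathrm{odd}}$ and cancels term-by-term in \eqref{teven}--\eqref{todd}; these are the same cancellations, merely organized differently, and both land on $2A_{L,m}-\tfrac{mL}{4}$ with the coefficient $\frac{2^{mL}}{(L!)^{m}}$. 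One small remark: the extra work you propose to upgrade the $o(1/k)$ remainder of Lemma~\ref{lem:evenoddest} to $O(1/k^{2})$ is unnecessary for the asymptotic equivalence claimed. If $\epsilon_k\to 0$ then for any $\delta>0$ one has $\bigl|\sum_{k\le K}\epsilon_k/k\bigr|=O_K(1)$ and $\bigl|\sum_{K<k\le N}\epsilon_k/k\bigr|\le\delta\log N$ for $K$ large, so $\sum_{k\le N}\epsilon_k/k=o(\log N)$ already, which is all that \eqref{logNlemma} requires; the more precise remainder would only be needed to control the size of the $O(1)$ subleading correction.
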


\begin{proof}
We decompose \eqref{fikform} as a sum over even and odd indices via $\mathbb{E}(N_{\mathbb{R}}) = T_{\mathrm{even}}-T_{\mathrm{odd}}$ and use Lemma \ref{lem:evenoddest}. We have
\begin{align}
T_{\mathrm{even}} &= 2q_{L,m}\sum_{j=0}^{\lfloor \frac{N-2}{2} \rfloor}\binom{L+2j}{L}^{m}g_{2j}\\
&=2q_{L,m}\sum_{j=j_{0}}^{\lfloor \frac{N-2}{2} \rfloor}\frac{2^{mL}}{(L!)^{m}}\left(1+\frac{mL(L+1)}{4j}\right)\left(\frac{1}{2}+\frac{\beta}{j}+\frac{A_{L,m}}{j}+o\left(\frac{1}{j}\right)\right) \label{teven}
\end{align}
for some constant $\beta$ whose precise value is unimportant as we shall see below. To derive this we have used the asymptotic expansion of a binomial coefficient and a similar expansion to compute asymptotics of the coefficient $g^{(\mathrm{sym})}_{j}$ in \eqref{gjsym}. Similarly, we have
\begin{equation}
T_{\mathrm{odd}} = 2q_{L,m}\sum_{j=j_{0}}^{\lfloor \frac{N-2}{2} \rfloor}\frac{2^{mL}}{(L!)^{m}}\left(1+\frac{mL(L+3)}{4j}\right)\left(\frac{1}{2}+\frac{\beta}{j}-\frac{A_{L,m}}{j}+o\left(\frac{1}{j}\right)\right). \label{todd}
\end{equation}
Then directly computing the difference $T_{\mathrm{even}}-T_{\mathrm{odd}}$ from \eqref{teven} and \eqref{todd} results in several cancellations so that only terms proportional to $\frac{1}{j}$ remain. Collecting the terms results in \eqref{logNlemma}.
\end{proof}	

\begin{proof}[Proof of Theorem \ref{thm:weakregime}]
In the Appendix we compute the integrals $A_{L,m}$ in \eqref{alm} explicitly. We find that
\begin{equation}
A_{L,m} = \frac{Lm}{8}+\frac{1}{2}\frac{\Gamma(mL)}{\Gamma\left(\frac{mL}{2}\right)^{2}}\,2^{-mL}. \label{almform}
\end{equation}
Inserting \eqref{almform} into \eqref{logNlemma} and using the duplication formula for the Gamma function completes the proof.
\end{proof}

\section{Laplace asymptotics}
\label{sec:laplace}
The purpose of this section is to prove the three Propositions \ref{prop:uniformweight}, \ref{prop:trunco} and \ref{prop:fint}.
	
	\begin{proof}[Proof of Proposition \ref{prop:uniformweight}]
	After the scaling $y_{j} \to y_{j}x^{\frac{1}{m}}$ the integral in \eqref{weight} takes the following form:
	\begin{equation}
	I := \int_{A_{x}}e^{K\phi(\vec{y})}h(\vec{y})\,d\vec{y} \label{ilxsaddle}
	\end{equation}
	where $K = \frac{L}{2}-1$, $\vec{y} = (y_{1},\ldots,y_{m-1})$, $d\vec{y}= dy_{1}\ldots dy_{m-1}$, $h(\vec{y}) = (y_{1}\ldots y_{m-1})^{-1}$,
	\begin{equation}
	A_{x} = \bigg\{\vec{y} \in [0,x^{-\frac{1}{m}}]^{m-1} : x^{\frac{1}{m}} \leq \prod_{i=1}^{m-1}y_{i} \leq x^{-(1-\frac{1}{m})}\bigg\},
	\end{equation}
	and the action, which depends implicitly on $x$ throughout the proof, is 
	\begin{equation}
	\phi(\vec{y}) = \log\left(1-\frac{x^{\frac{2}{m}}}{y_{1}^{2}\ldots y_{m-1}^{2}}\right)+\sum_{j=1}^{m-1}\log(1-x^{\frac{2}{m}}y_{j}^{2}). \label{action}
	\end{equation}
	Although the asymptotic analysis of \eqref{ilxsaddle} is standard, special care has to be taken if $x$ depends on $K$ such that $x \to 0$ as $K \to \infty$. One reason is that $\phi(\vec{y})$ vanishes as $x \to 0$ and this occurs at a rate $x^{\frac{2}{m}}$. As we shall see, provided that the \textit{combined quantity} $\eta := Kx^{\frac{2}{m}}$ is large, the Laplace method is still applicable, for which we now give the details.
	
	It is straightforward to check that the only critical point of \eqref{action} is $\vec{y} = (1,\ldots,1)$. Thus for $\epsilon>0$ small, we split $A_{x}$ into the region $E := [1-\epsilon,1+\epsilon]^{m-1}$ and its compliment $A_{x} \setminus E$ and denote the corresponding integrals $I_{E}$ and $I_{E^{\mathrm{c}}}$ respectively. Note that because $x < 1-\kappa$, no matter how small we choose $\kappa>0$, we can make $\epsilon>0$ small enough such that $E$ is contained strictly inside $A_{x}$. For $I_{E}$ we will Taylor expand the action at the point $\vec{y} = \vec{1}$ up to the fourth derivative:
	\begin{equation}
	\phi(\vec{y}) = \phi(\vec{1})+\frac{1}{2}(\vec{y}-\vec{1})^{\mathrm{T}}H(\vec{y}-\vec{1}) + E_{3}(\vec{y}-\vec{1})+R_{4}(\vec{y}) \label{taylorphi}
	\end{equation}
	where $H$ is the Hessian matrix of $\phi$ at $\vec{1}$, $E_{3}(\vec{y}-\vec{1})$ is the third order term in the Taylor expansion of $\phi$, and $R_{4}$ is the remainder term that can be bounded in terms of fourth order partial derivatives of $\phi$. 
	
	We will denote in what follows $\xi := K(E_{3}(\vec{y}-\vec{1})+R_{4}(\vec{y}))$. A direct computation of the third and fourth derivatives of $\phi$ shows that they are uniformly bounded on the set $E$ by a factor $x^{\frac{2}{m}}$ times an absolute constant depending only on $m$ and $\kappa$. Therefore we have the following bounds for $\vec{y} \in E$,
	\begin{align}
	\xi(\vec{y}) &= KE_{3}(\vec{y}) + O(\eta|\vec{y}-\vec{1}|^{4}),\label{xi1}\\
	\xi(\vec{y}) &= O(\eta|\vec{y}-\vec{1}|^{3}), \label{xi2}\\
	\xi(\vec{y}) &= O(\eta \epsilon |\vec{y}-\vec{1}|^{2}). \label{xi3}
	\end{align}
The Hessian matrix $H$ can be computed explicitly:
	\begin{equation}
	H_{ij} = -\frac{4x^{\frac{2}{m}}}{(1-x^{\frac{2}{m}})^{2}}\times \begin{cases} 2, & i=j\\
	1, & i\neq j \end{cases} \label{matrixHij}
	\end{equation}
	where $i,j=1,\ldots,m-1$. We have
	\begin{equation}
	\det(-H) = \frac{4^{m-1}x^{\frac{2(m-1)}{m}}}{(1-x^{\frac{2}{m}})^{2(m-1)}}\,m.
	\end{equation}
	
	To approximate $I_{E}$, we note that on $E$ the functions $\xi$ and $h$ are close to zero and $1$ respectively, and make use of the trivial identity
	\begin{equation}
	e^{\xi}h = 1 + (h-1) + (h-1)\xi + \xi +(e^{\xi}-1-\xi)h.
	\end{equation}
	Then we can decompose $I_{E} = e^{K\phi(\vec{1})}(I_{1}-I_{2}+I_{3}+I_{4}+I_{5}+I_{6})$, where after the shift $y_{j} \to y_{j}+1$ we have
	\begin{alignat}{3}
	&I_{1} = \int_{\mathbb{R}^{m-1}}e^{\frac{1}{2}K\vec{y}^{\mathrm{T}}H\vec{y}}d\vec{y}
	,\quad \,\,\, && I_{2} = \int_{\mathbb{R}^{m-1}\setminus[-\epsilon,\epsilon]^{m-1}}e^{\frac{1}{2}K\vec{y}^{\mathrm{T}}H\vec{y}}d\vec{y},\\
	&I_{3} = \int_{[-\epsilon,\epsilon]^{m-1}}e^{\frac{1}{2}K\vec{y}^{\mathrm{T}}H\vec{y}}(\tilde{h}(\vec{y})-1)\,d\vec{y}, \quad \,\,\, && I_{4} = \int_{[-\epsilon,\epsilon]^{m-1}}e^{\frac{1}{2}K\vec{y}^{\mathrm{T}}H\vec{y}}(\tilde{h}(\vec{y})-1)\tilde{\xi}(\vec{y})\,d\vec{y},\\
	&I_{5} =  \int_{[-\epsilon,\epsilon]^{m-1}}e^{\frac{1}{2}K\vec{y}^{\mathrm{T}}H\vec{y}}\tilde{\xi}(\vec{y})\,d\vec{y}, \quad \,\,\, 
	\end{alignat}
	and
	\begin{equation}
  I_{6} = \int_{[-\epsilon,\epsilon]^{m-1}}e^{\frac{1}{2}K\vec{y}^{\mathrm{T}}H\vec{y}}\left(e^{\tilde{\xi}(\vec{y})}-(1+\tilde{\xi}(\vec{y}))\right)\tilde{h}(\vec{y})\,d\vec{y},
  \end{equation}
where $\tilde{h}(\vec{y}) = h(\vec{y}+\vec{1})$ and $\tilde{\xi}(\vec{y}) = \xi(\vec{y}+\vec{1})$. The integral $I_{1}$ gives the leading order term:
	\begin{equation}
	\begin{split}
	e^{K\phi(\vec{1})}I_{1} &= (2\pi)^{\frac{m-1}{2}}\frac{e^{K\phi(\vec{1})}}{K^{\frac{m-1}{2}}\det(-H)^{\frac{1}{2}}}\\
	&= \frac{1}{\sqrt{m}}\,(2\pi)^{\frac{m-1}{2}}2^{-m+1}(1-x^{\frac{2}{m}})^{Km+m-1}\eta^{-\frac{m-1}{2}},
	\end{split}
	\end{equation}
where we remind that $\eta := Kx^{\frac{2}{m}}$. We must show that the relative error produced by the other terms $\{I_{j}\}_{j=2}^{6}$ is no larger than the error term claimed in \eqref{uniformweight}, namely that $I_{j}/I_{1} = O(\eta^{-1})$ for each $j=2,3,4,5,6$. The smallest eigenvalue of $-H$ will be helpful to prove this and we denote it $\Gamma_{1}$. From \eqref{matrixHij} we have that there is an absolute constant $c_{m}>0$ depending only on $m$ such that
	\begin{equation}
	\Gamma_{1} = c_{m}\frac{x^{\frac{2}{m}}}{(1-x^{\frac{2}{m}})^{2}}.
	\end{equation}
Then a standard diagonalisation of $H$ followed by Cauchy-Schwarz gives the bound
	\begin{equation}
	e^{\frac{1}{2}K\vec{y}^{\mathrm{T}}H\vec{y}} \leq e^{-\frac{1}{2}K\Gamma_{1}|\vec{y}|^{2}} \leq e^{-c \eta |\vec{y}|^{2}} \label{hessbound}
	\end{equation}
for some constant $c>0$. Applying this to $I_{2}$, we upper bound by extending all integrations to $\mathbb{R}$ except $y_{1}$. These $m-2$ integrations are explicit Gaussian integrals and yield
	\begin{align}
	I_{2} &\leq C\eta^{-\frac{m-2}{2}}\int_{\epsilon}^{\infty}e^{-c\eta y_{1}^{2}}\,dy_{1} \leq Ce^{-c\eta},
	\end{align}
which implies $I_{2}/I_{1} = O(e^{-c\eta})$. 

The estimation of $I_{3},I_{4},I_{5}$ and $I_{6}$ follows from Taylor expanding $\tilde{h}(\vec{y})$ and applying the estimates on $\xi$ in \eqref{xi1}, \eqref{xi2} and \eqref{xi3}. The degree of the monomials in $\vec{y}$ occurring in these expansions will give a corresponding power of $\eta^{-\frac{1}{2}}$ after integration. For example, in $I_{3}$ we Taylor expand $\tilde{h}(\vec{y})$ near $0$ to quadratic order, the linear terms produced giving zero in the integration by symmetry. In the error term of quadratic order, we upper bound by replacing the integration with $\mathbb{R}^{m-1}$, use \eqref{hessbound} and evaluate the resulting Gaussian integral giving $I_{3}/I_{1} = O(\eta^{-1})$. 

In $I_{4}$ and $I_{5}$ we apply the same approach, using $|\tilde{h}(\vec{y})-1||\xi(y)| = O(\eta |\vec{y}|^{4})$, which upon integration shows that $I_{4}/I_{1} = O(\eta^{-1})$. Then in $I_{5}$ the third order terms contained in $\tilde{\xi}(\vec{y})$ vanish in the integration by symmetry. The fourth order remainder term gives a contribution of order $O(\eta |\vec{y}|^{4})$. Thus $I_{5}/I_{1} = O(\eta^{-1})$. In $I_{6}$ we apply the bound $|e^{\tilde{\xi}}-1-\tilde{\xi}| \leq \frac{|\tilde{\xi}|^{2}}{2}e^{|\tilde{\xi}|} = O(\eta^{2}|\vec{y}|^{6}e^{\eta \epsilon C |\vec{y}|^{2}})$, where in the last estimate we applied \eqref{xi2} and \eqref{xi3}. This implies that
\begin{equation}
I_{6} \leq C\int_{\mathbb{R}^{m-1}}\,e^{-\eta |\vec{y}|^{2}(c-C \epsilon)}\eta^{2}|\vec{y}|^{6}d\vec{y},
\end{equation}
and choosing $\epsilon>0$ small enough to ensure $c-C\epsilon > 0$, we also have $I_{6}/I_{1} = O(\eta^{-1})$.
	
	Finally we treat the contribution from the complimentary region $E^{\mathrm{c}} = A_{x} \setminus [1-\epsilon,1+\epsilon]^{m-1}$ to the integral \eqref{ilxsaddle}. Without loss of generality suppose that $|y_{1}-1| > \epsilon$. On this region we will use the bound $1+x \leq e^{x}$, which implies that
	\begin{equation}
	\left(\left(\frac{1-x^{\frac{2}{m}}(y_{1}^{2}\ldots y_{m-1}^{2})^{-1}}{1-x^{\frac{2}{m}}}\right)\prod_{j=1}^{m-1}\left(\frac{1-x^{\frac{2}{m}}y_{j}^{2}}{1-x^{\frac{2}{m}}}\right)\right)^{K} \leq \mathrm{exp}\left(-K\frac{x^{\frac{2}{m}}}{1-x^{\frac{2}{m}}}\psi(\vec{y})\right)
	\end{equation}
	where the action $\psi(\vec{y})$ is independent of $x$ and is given by
	\begin{equation}
	\psi(\vec{y}) = \sum_{j=1}^{m-1}y_{j}^{2}+\frac{1}{y_{1}^{2}\ldots y_{m-1}^{2}}-m.
	\end{equation}
	The function $\psi(\vec{y})$ has a unique global minimum at $\vec{y} = \vec{1}$ and $\psi(\vec{1}) = 0$. The integration domain avoids the point $\vec{1}$ and so there exists a constant $c_{\epsilon}>0$ such that $\psi(\vec{y}) > c_{\epsilon}$ for all $\vec{y} \in E^{\mathrm{c}}$. We thus obtain the bound
	\begin{align}
	(1-x^{\frac{2}{m}})^{-Km}I_{E^{\mathrm{c}}} &\leq \int_{\mathbb{R}^{m-2}}\int_{1+\epsilon}^{\infty}\mathrm{exp}\left(-c\eta\,\psi(\vec{y})\right)\frac{d\vec{y}}{\vec{y}} \label{trivbndw}\\
	&\leq Ce^{-c\eta}.
	\end{align}
	This implies that $I_{E^{\mathrm{c}}}/I_{1} = O(e^{-c \eta})$. Repeating the estimate \eqref{trivbndw} on the full integration interval gives the crude estimate \eqref{crudeweight}.
	
	Therefore the biggest error terms came from $I_{3},I_{4},I_{5}$ and $I_{6}$ and had a relative error with $I_{1}$ of order $O(\eta^{-1})$, as required.
	\end{proof}

\begin{proof}[Proof of Proposition \ref{prop:trunco}]
We recall from the hypothesis of Theorem \ref{th:mainthm} that $L := L_{N}$ is a sequence of positive integers such that $\gamma := L_{N}/N \to \tilde{\gamma}$ as $N \to \infty$ with $\tilde{\gamma}>0$. We also recall that $\alpha := \frac{1}{1+\gamma}$. It will be important in what follows that due to the hypothesis on $\gamma$, we can find $N_{0} \in \mathbb{N}$ and $\epsilon>0$ independent of $N$ such that $\alpha < 1-\epsilon$ for all $N > N_{0}$. 

We start from the integral representation
\begin{equation}
\binom{L+n}{n} = \frac{1}{2\pi i}\oint_{C}(1-z)^{-L-1}z^{-n-1}\,dz, \label{intrepL}
\end{equation}
where $C$ is any loop enclosing the origin. Introducing $z^{(m)} := z_{1}\ldots z_{m}$ and $d\vec{z} = dz_{1}\ldots dz_{m}$, this implies that
\begin{align}
&f_{N-2,L}(x) = \frac{1}{(2\pi i)^{m}}\oint_{C^{m}}\left(\prod_{j=1}^{m}(1-z_{j})^{-L-1}\right)\sum_{n=0}^{N-2}\left(\frac{x}{z^{(m)}}\right)^{n}\,\frac{d\vec{z}}{z^{(m)}}\\
&= \frac{1}{(2\pi i)^{m}}\oint_{C^{m}}\left(\prod_{j=1}^{m}(1-z_{j})^{-L-1}\right)\,\frac{d\vec{z}}{z^{(m)}-x} \label{1sttermgeom}\\
&  - \frac{x^{N-1}}{(2\pi i)^{m}}\oint_{C^{m}}\left(\prod_{j=1}^{m}(1-z_{j})^{-L-1}z_{j}^{-N+1}\right)\,\frac{d\vec{z}}{z^{(m)}-x}. \label{geomsplit}
\end{align}
Let us start with the exterior region defined by $(\alpha+\omega)^{m} < |x| \leq 1$ with $\alpha = \frac{1}{1+\gamma}$. We let $C$ be the circle around the origin of radius $\alpha$, noting that our hypothesis on $L_{N}$ implies that for $N>N_{0}$ the contour $C$ lies strictly inside the unit circle. This is fundamental and without this assumption the asymptotics of $f_{N-2,L}$ and Theorem \ref{th:mainthm} can take a different form, indeed recall Theorem \ref{thm:weakregime}. 

Consider the $z_{m}$ integral in the first term of \eqref{geomsplit}. The integrand is analytic inside the unit circle except at the point $z_{m} = x/(z_{1}\ldots z_{m-1})$, which by the choice of contour and condition on $x$ lies outside $C$. Therefore by Cauchy's theorem the first term \eqref{1sttermgeom} is zero and we have
\begin{align}
f_{N-2,L}(x) &= - \frac{x^{N-1}}{(2\pi i)^{m}}\oint_{C^{m}}\left(\prod_{j=1}^{m}(1-z_{j})^{-\gamma N-1}z_{j}^{-N+1}\right)\,\frac{1}{z^{(m)}-x}\,d\vec{z} \\
&= - \frac{x^{N-1}}{(2\pi i)^{m}}\oint_{C^{m}}\prod_{j=1}^{m}e^{-N\phi(z_{j})}h_{x}(\vec{z})\,d\vec{z} \label{fnoutside}
\end{align}
where $\phi(z) := \gamma \log(1-z)+\log(z)$ and 
\begin{equation}
h_{x}(\vec{z}) := \left(\prod_{j=1}^{m}\frac{z_{j}}{1-z_{j}}\right)\,\frac{1}{z^{(m)}-x}. \label{fxdef}
\end{equation}
The integral \eqref{fnoutside} can now be analysed with the Laplace method. We denote $z_{j} = \alpha e^{i\theta_{j}}$ with $\theta_{j} \in (-\pi,\pi]$ for each $j=1,\ldots,m$. A simple computation shows that $\phi$ is stationary at $\vec{\theta} = (0,\ldots,0)$ where it achieves a global minimum. Now consider the region $E := [-\epsilon,\epsilon]^{m}$ and denote the contribution to the integral \eqref{fnoutside} as $I_{E}$ with the complimentary region denoted $I_{E^{\mathrm{c}}}$ where $E^{\mathrm{c}} := (-\pi,\pi]^{m}\setminus E$. On $E$ we Taylor expand the action
\begin{equation}
\phi(\alpha e^{i\theta}) = \gamma\log\left(\frac{\gamma}{1+\gamma}\right)+\log\left(\frac{1}{1+\gamma}\right)+\frac{1}{2}\frac{1+\gamma}{\gamma}\theta^{2}+O(\theta^{3}), \qquad |\theta| < \epsilon. \label{fnactiontaylor}
\end{equation}
Regarding the function $h_{x}(\vec{z})$ in \eqref{fxdef}, we have
\begin{equation}
\frac{1}{z^{(m)}-x} = \frac{1}{\alpha^{m}-x}\left(1+ \alpha^{m}\frac{1-e^{i\theta^{(m)}}}{\alpha^{m}e^{i\theta^{(m)}}-x}\right) \label{cauchydecomp}
\end{equation}
where $\theta^{(m)} = \theta_{1}+\ldots+\theta_{m}$.
As the distance between $\alpha^{m}$ and $x$ is at least $(\alpha+\omega)^{m}-\alpha^{m} \geq m\alpha^{m-1}\omega$, the second term in the brackets in \eqref{cauchydecomp} is $O(|\theta^{(m)}|\omega^{-1})$. The linear dependence on $\theta_{j}$'s will contribute an additional factor $N^{-\frac{1}{2}}$ in the Laplace method, so that this correction will contribute at order $O\left(\frac{1}{\sqrt{N}\omega}\right)$. Taylor expanding the other terms in \eqref{cauchydecomp}, we see that
\begin{equation}
h_{x}(\vec{z})\,d\vec{z} = i^{m}\,\frac{(\gamma(1+\gamma))^{-m}}{\alpha^{m}-x}\left(1 + O(\vec{\theta}\omega^{-1})\right)\,d\vec{\theta} \label{fxexpand}.
\end{equation}
Using \eqref{fxexpand} and \eqref{fnactiontaylor} we obtain the approximation
\begin{align}
I_{E} &:= \frac{(\gamma(1+\gamma))^{-m}}{(2\pi)^{m}}\left(\frac{1+\gamma}{\gamma}\right)^{m\gamma N}\frac{(1+\gamma)^{mN}}{\alpha^{m}-x}\left(\int_{[-\epsilon,\epsilon]}e^{-N\frac{1+\gamma}{\gamma}\theta^{2}}\,d\theta\right)^{m}\left(1+ O\left(\frac{1}{\sqrt{N}\omega}\right)\right) \label{fNerror}\\
&=\frac{e_{N,m}}{\alpha^{m}-x}\,\left(1+O(e^{-cN})+O\left(\frac{1}{\sqrt{N}\omega}\right)\right) .
\end{align}
On the other hand, the integral $I_{E^{\mathrm{c}}}$ is exponentially small relative to $I_{E}$. To see this, note that for $z=\alpha e^{i\theta}$ the action satisfies $\mathrm{Re}(\phi(z)) = \gamma\log|1-\alpha e^{i\theta}|+\log(\alpha)$ and that this function is monotonically increasing in $\theta$ with the minimum obtained at $\theta=0$ with $\mathrm{Re}(\phi(\alpha)) = \phi(\alpha)$. The monotonicity follows immediately from the formula $|1-\alpha e^{i\theta}|^{2} = (1-\alpha)^{2}+2\alpha(1-\cos(\theta))$ and corresponding monotonicity of the log and square root functions. Hence there is a constant $c_{\epsilon}>0$ such that $I_{E^{\mathrm{c}}}/I_{E} = O\left(\frac{e^{-c_{\epsilon}N}}{\omega}\right)$. This completes the proof of the Lemma in the exterior region $(\alpha+\omega)^{m} < |x| < 1-\kappa$.

For the region $-(\alpha+\omega)^{m} < x < (\alpha-\omega)^{m}$ we follow the steps in \eqref{geomsplit} but we use the big circle
\begin{equation}
C_{R} = \bigg\{-R\alpha+(R+1)\alpha e^{i\theta} : -\pi < \theta \leq \pi\bigg\}.
\end{equation}
In this case we will use the following facts proved in Lemma \ref{resbound}: Choosing $R$ large enough, in the integration over $z_{m}$ the pole at $x/(z_{1}\ldots z_{m-1})$ is strictly inside $C_{R}$. Furthermore, there exist absolute constants $\delta_{0}>0$ and $c>0$ such that
\begin{equation}
\inf_{x \in [-(\alpha+\delta_{0})^{m},(\alpha-\omega)^{m}]}\inf_{\theta_{k} \in [-\pi,\pi], k=1,\ldots,m}|z^{(m)}-x| > c\omega>0. \label{infbnd}
\end{equation}
Hence by Cauchy's integral formula we get
\begin{align}
f_{N-2,L}(x) &= \frac{1}{(2\pi i)^{m-1}}\oint_{C_{R}^{m-1}}\left(1-\frac{x}{z_{1}\ldots z_{m-1}}\right)^{-L-1}\prod_{j=1}^{m-1}(1-z_{j})^{-L-1}\,\frac{d\vec{z}}{z_{1}\ldots z_{m-1}} \label{firstterm2}\\
& - \frac{x^{N-1}}{(2\pi i)^{m}}\oint_{C_{R}^{m}}\left(\prod_{j=1}^{m}(1-z_{j})^{-L-1}z_{j}^{-N+1}\right)\,\frac{1}{z^{(m)}-x}\,d\vec{z} \label{secondterm2}\\
&= f_{\infty,L}(x) - \frac{x^{N-1}}{(2\pi i)^{m}}\oint_{C_{R}^{m}}\left(\prod_{j=1}^{m}(1-z_{j})^{-L-1}z_{j}^{-N+1}\right)\,\frac{1}{z^{(m)}-x}\,d\vec{z}, \label{secondterm3}
\end{align}
where the identification of the first integral \eqref{firstterm2} in terms of $f_{\infty,L}(x)$ follows from repeating the steps that led to \eqref{geomsplit} with $N=\infty$ and again applying Cauchy's integral formula. The estimation of the second term in \eqref{secondterm3} can be obtained by repeating the same steps that led to \eqref{fNerror}, using \eqref{cauchydecomp} and \eqref{infbnd}. In the complimentary region $E^{\mathrm{c}}$ we have $\mathrm{Re}(\phi(z)) = \gamma\log|1+R\alpha-(R+1)\alpha e^{i\theta}|+\log|-R\alpha + (R+1)\alpha e^{i\theta}|$ which again is an increasing function of $\theta$ with a unique minimum at $\theta=0$. This is easy to see from the formulas
\begin{align}
|-R\alpha+(R+1)\alpha e^{i\theta}|^{2} &= \alpha^{2}(1+2R(R+1)(1-\cos(\theta)),\\
|1+R\alpha - (R+1)\alpha e^{i\theta}|^{2} &= (1-\alpha)^{2}+2(R\alpha(1+(\alpha+1)R)+\alpha)(1-\cos(\theta)),
\end{align}
so that we again have a constant $c_{\epsilon}>0$ such that $I_{E^{\mathrm{c}}}/I_{E} = O\left(\frac{e^{-c_{\epsilon}N}}{\omega}\right)$. This concludes the proof of \eqref{tfintest} when $-(\alpha+\delta_{0})^{m} < x < (\alpha-\omega)^{m}$.
\end{proof}

\begin{proof}[Proof of Proposition \ref{prop:fint}]
Consider the integral representation for $f_{\infty,L}(x)$ given in \eqref{firstterm2}. For each $j=1,\ldots,m-1$, we scale $z_{j} \to x^{\frac{1}{m}}z_{j}$ and deform the contour to the unit circle $|z_{j}|=1$, setting $z_{j} = e^{i\theta_{j}}$ with $\theta_{j} \in [-\pi,\pi)$. This gives
\begin{equation}
f_{\infty,L}(x) = \frac{1}{(2\pi)^{m-1}}\int_{[-\pi,\pi]^{m-1}}e^{-(L+1)\phi(\vec{\theta})}\,d\vec{\theta}
\end{equation}
where $\vec{\theta} = (\theta_1,\ldots,\theta_{m-1})$, $d\vec{\theta} = d\theta_{1}\ldots d\theta_{m-1}$ and the action is given by
\begin{equation}
\phi(\vec{\theta}) = \log\left(1-x^{\frac{1}{m}}e^{-i(\theta_{1}+\ldots+\theta_{m-1})}\right)+\sum_{j=1}^{m-1}\log\left(1-x^{\frac{1}{m}}e^{i\theta_{j}}\right). \label{fintaction}
\end{equation}
It is straightforward to see that $\vec{\theta} = (0,\ldots,0)$ is a critical point of \eqref{fintaction} and this turns out to give the main contribution. There are also $m-1$ other critical points equally spaced on the unit circle, but their contribution will be exponentially suppressed compared to the critical point at the origin. Therefore we consider a sufficiently small $\epsilon>0$ and focus on $E = [-\epsilon,\epsilon]^{m-1}$, treating $E^{\mathrm{c}} = [-\pi,\pi]^{m-1} \setminus E$ separately. The Hessian matrix of $\phi$ is given by
\begin{equation}
H_{ij} = \frac{x^{\frac{1}{m}}}{(1-x^{\frac{1}{m}})^{2}}\times \begin{cases} 2, & i=j\\
1, & i\neq j \end{cases}. \label{matrixHijfint}
\end{equation}
On the set $E$, the considerations now follow the proof of Theorem \ref{weight} identically, taking now $\eta = Lx^{\frac{1}{m}}$, and we omit the details. On the complimentary region we aim to show that $I_{E}/I_{E^{\mathrm{c}}} = O(e^{-c\eta})$. Consider the case $m$ even as the case $m$ odd follows an identical pattern. We split the interval
\begin{equation}
[-\pi,\pi]^{m-1} = \bigcup\limits_{j=-\frac{m-2}{2}}^{\frac{m-2}{2}}A_{j}, \qquad A_{j} := \bigg\{\vec{\theta} \in [-\pi,\pi]^{m-1} : \sum_{j=1}^{m-1}\theta_{j} \in ((2j-1)\pi,(2j+1)\pi]\bigg\}.
\end{equation}
Denoting $\mathcal{A} = \frac{2x^{\frac{1}{m}}}{(1-x^{\frac{1}{m}})^{2}}$, we have the bound, for any $\theta \in (-\pi,\pi]$,
\begin{equation}
\begin{split}
\bigg{|}\frac{1-x^{\frac{1}{m}}}{1-x^{\frac{1}{m}}e^{i\theta}}\bigg{|}^{2} &= \frac{1}{1+\mathcal{A}(1-\cos(\theta))} \leq e^{-\frac{\mathcal{A}(1-\cos(\theta))}{1+2\mathcal{A}}} \leq e^{-c\,\frac{2\mathcal{A}}{1+2\mathcal{A}}\,\theta^{2}}\\
&= e^{-c\,\frac{4x^{\frac{1}{m}}}{(1+x^{\frac{1}{m}})^{2}}\,\theta^{2}}\leq e^{-c\,x^{\frac{1}{m}}\,\theta^{2}}
\end{split}
\end{equation}
where we used that $-\log(1+x) \leq -x(1+x)^{-1}$. Then if $\vec{\theta} \in A_{j}$, we have
\begin{equation}
\bigg{|}\frac{1-x^{\frac{1}{m}}}{1-x^{\frac{1}{m}}e^{-i(\theta_1+\ldots+\theta_{m-1})}}\bigg{|}^{2} \leq e^{-c\,x^{\frac{1}{m}}\,(\theta_{1}+\ldots+\theta_{m-1}-2j\pi)^{2}}
\end{equation}
and 
\begin{equation}
I_{E^{\mathrm{c}}} \leq (1-x^{\frac{1}{m}})^{-Lm}\sum_{j=-\frac{m-2}{2}}^{\frac{m-2}{2}}\int_{A_{j}\cap E^{\mathrm{c}}}e^{-c\,(L+1)x^{\frac{1}{m}}\,\psi(\vec{\theta})}\,d\vec{\theta}
\end{equation}
where $\psi(\vec{\theta}) = (\theta_{1}+\ldots+\theta_{m-1}-2j\pi)^{2}+\sum_{j=1}^{m-1}\theta_{j}^{2}$. It can easily be checked that the function $\psi(\vec{\theta})$ achieves its global minimum when $\theta_{k} = \frac{2j\pi}{m}$ for each $k=1,\ldots,m-1$, so that $\psi(\vec{\theta}) \geq \frac{(2j\pi)^{2}}{m}$. If $j=0$ we note that on $A_{j}\cap E^{\mathrm{c}}$ there is at least one $\theta_{k}$ satisfying $|\theta_{k}| > \epsilon$. Therefore, there exists a constant $c_{\epsilon}>0$ such that $\psi(\vec{\theta}) > c_{\epsilon}$ and $I_{E^{\mathrm{c}}} \leq (1-x^{\frac{1}{m}})^{-Lm}e^{-c_{\epsilon}Lx^{\frac{1}{m}}}$. This implies that $I_{E}/I_{\mathrm{E^{\mathrm{c}}}} = O(e^{-c\eta})$ as desired. Repeating these bounds with $E^{\mathrm{c}}$ replaced with the full integration interval $[-\pi,\pi]^{m-1}$ gives the crude bound \eqref{crudefint}. Furthermore, if $x<0$ we can again repeat these steps starting with \eqref{firstterm2}, and obtain
\begin{equation}
|f_{\infty}(x)| \leq  (1-(-x)^{\frac{1}{m}})^{-Lm}\sum_{j=-\frac{m-2}{2}}^{\frac{m-2}{2}}\int_{A_{j}}e^{-c\,(L+1)(-x)^{\frac{1}{m}}\,\tilde{\psi}(\vec{\theta})}\,d\vec{\theta} \label{negxbndpf}
\end{equation}
with the modified action $\tilde{\psi}(\vec{\theta}) = (\theta_{1}+\ldots+\theta_{m-1}-(2j+1)\pi)^{2}+\sum_{j=1}^{m-1}\theta_{j}^{2}$. The function $\tilde{\psi}(\vec{\theta})$ achieves its minimum at $\theta_{k} = \frac{(2j+1)\pi}{m}$ for $k=1,\ldots,m-1$ so that $\tilde{\psi}(\vec{\theta}) > \frac{((2j+1)\pi)^{2}}{m} > \pi^{2}/m$. Inserting this bound into \eqref{negxbndpf} completes the proof of \eqref{negxbnd}.
\end{proof}

\appendix

\section{The real Ginibre ensemble}
\label{sec:realgin}
Although we focus in this paper on the case of truncated orthogonal matrices, our approach applies almost without change to a similar model defined as follows. Let $X^{(m)} = G_{1}\ldots G_{m}$ denote the product of $m$ independent real Ginibre matrices of size $N \times N$, that is each matrix in the product is independent and consists of independent standard (real) Gaussian random variables. Like the truncated orthogonal matrices, they are an integrable model whose eigenvalues (including the real ones) form a Pfaffian point process \cite{IK14,FI16}. 

The analogues of exact formulae \eqref{iln} and \eqref{densiln} are known from the work of Forrester and Ipsen \cite{FI16}. It is assumed in \cite{FI16} that $N$ is even in this context, and we make the same assumption in what follows. There it is shown that
\begin{equation}
\mathbb{E}(N^{(m)}_{\mathbb{R}}) = \frac{N^{\frac{3m}{2}}}{(2\sqrt{2\pi})^{m}}\,\int_{\mathbb{R}}dx\,\int_{\mathbb{R}}dy\,|x-y|w_{\mathrm{Gin}}(N^{m/2}x)w_{\mathrm{Gin}}(N^{m/2}y)f_{N-2}(N^{m}xy)
\end{equation}
where
\begin{equation}
w_{\mathrm{Gin}}(N^{m/2}x) = \int_{\mathbb{R}^{m}}\mathrm{exp}\left(-\frac{1}{2}\sum_{j=1}^{m}\lambda_{j}^{2}\right)\delta(xN^{m/2}-\lambda_{1}\ldots \lambda_{m})\,d\lambda_{1}\ldots d\lambda_{m}, \label{ginweightdef}
\end{equation}
and
\begin{equation}
f_{N-2}(N^{m}x) = \sum_{n=0}^{N-2}\frac{(N^{m}x)^{n}}{(n!)^{m}}. \label{ginfdef}
\end{equation}

Using this formula, we can derive analogues of Theorems \ref{th:mainthm} and \ref{th:densthm} with an identical approach used in the present paper. 
\begin{theorem}
\label{th:ginthm}
Let $N^{(m)}_{\mathbb{R}}$ denote the number of real eigenvalues of $X^{(m)} = G_{1}\ldots G_{m}$. For any fixed $m \in \mathbb{N}$, we have
\begin{equation}
\mathbb{E}(N^{(m)}_{\mathbb{R}}) = \sqrt{\frac{2 N m}{\pi}} + O(1), \qquad N \to \infty. \label{o1errorgin}
\end{equation}
\end{theorem}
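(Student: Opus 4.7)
The plan is to follow the strategy of Sections \ref{sec:strat}--\ref{sec:proofs} \emph{mutatis mutandis}, replacing the truncated orthogonal weight $w_L$ and the truncated sum $f_{N-2,L}$ by their Ginibre counterparts $w_{\mathrm{Gin}}$ and $f_{N-2}$. After the rescaling $x \mapsto N^{-m/2} x$, $y \mapsto N^{-m/2} y$, the prefactor $N^{3m/2}/(2\sqrt{2\pi})^{m}$ is absorbed by the Jacobian and one obtains an integral of the same shape as \eqref{iln}. The first task is to derive the Ginibre analogues of Propositions \ref{prop:uniformweight}, \ref{prop:trunco} and \ref{prop:fint}. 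For the weight, substituting $\lambda_j = N^{1/2}\mu_j$ in \eqref{ginweightdef} produces an action $-\tfrac{N}{2}\sum_j \mu_j^{2}$ constrained to $\mu_1 \cdots \mu_m = x$; a routine saddle-point argument at $|\mu_j| = |x|^{1/m}$ yields a uniform expansion of the form
\begin{equation*}
w_{\mathrm{Gin}}(N^{m/2} x) = c_m N^{(m-1)/2} |x|^{1/m - 1} \exp\!\left(-\tfrac{Nm}{2} |x|^{2/m}\right)\left(1 + O\!\left(\tfrac{1}{|x|^{2/m} N}\right)\right)
\end{equation*}
valid on $|x| \in [M N^{-1/2}, 1 - \kappa]$. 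For $f_{N-2}$, a contour integral representation analogous to \eqref{intrepL} combined with a saddle-point argument yields a decomposition into $f_{\infty}(xy) := \sum_{n \geq 0} (N^{m} xy)^{n}/(n!)^{m}$ on the region $|xy| < 1 - N^{-1/2}$, plus a tail term; the natural edge hyperbola is now $xy = 1$ rather than $xy = \alpha^{m}$.

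With these ingredients in place, I would replay the arguments of Section \ref{sec:proofs}. A microscopic neighbourhood of the origin is negligible; as noted, the proof of Lemma \ref{lem:micro} is already formulated in Ginibre terms and transfers over essentially verbatim. A thin layer near the edge $xy = 1$ contributes $O(1)$ by Laplace asymptotics paralleling those of Lemma \ref{lem:edge}; the negative-$y$ quadrant is controlled as in Lemma \ref{lem:2ndterm} using a bound $|f_\infty(x)| \leq C \exp(-c N |x|^{1/m})$ for $x < 0$ analogous to \eqref{negxbnd}; and the tail term of $f_{N-2}$ contributes $O(1)$ by a Laplace evaluation at the critical point of the integrand as in \eqref{localize2ndterm}--\eqref{jmin}.

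After these reductions the main contribution comes from the critical region $A = \{u > M N^{-1/2},\; |u - v| < \delta,\; uv < 1 - N^{-1/2}\}$ in the coordinates $u = x^{1/m}$, $v = y^{1/m}$. The Ginibre analogue of the kernel \eqref{trunckern} is
\begin{equation*}
T_{\mathrm{Gin}}(u, v) = \exp\!\left(-\tfrac{Nm}{2}(u - v)^{2}\right),
\end{equation*}
obtained by combining the exponents $-\tfrac{Nm}{2}(u^{2} + v^{2})$ from the two weights with $+Nm\,uv$ from $f_\infty$; the characteristic $(1-u^{2})^{-1}$ factors of the orthogonal model are entirely absent. Changing variables $v \mapsto u + v/\sqrt{N}$ and applying dominated convergence with the limits $\sqrt{N}\,Q(u,v/\sqrt{N}) \to mv$ from \eqref{qlimit} and $T_{\mathrm{Gin}}(u,u+v/\sqrt{N}) \to \exp(-\tfrac{mv^{2}}{2})$ gives
\begin{equation*}
\mathbb{E}(N^{(m)}_{\mathbb{R}}) \sim \sqrt{\tfrac{2Nm}{\pi}}\int_{0}^{1}du\int_{0}^{\infty}mv\,e^{-\frac{mv^{2}}{2}}\,dv = \sqrt{\tfrac{2Nm}{\pi}},
\end{equation*}
the role of $\int_{0}^{\sqrt{\tilde{\alpha}}}(1-u^{2})^{-1}du = \mathrm{arctanh}(\sqrt{\tilde{\alpha}})$ in \eqref{arctanhderiv} being played here by the trivial $\int_{0}^{1}du = 1$.

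The main technical difficulty is exactly the same as in the orthogonal case: the non-uniformity of the Laplace asymptotics near the origin when $m > 1$, where both $w_{\mathrm{Gin}}$ and $f_{N-2}$ develop singular $|x|^{1/m - 1}$-type prefactors preventing a direct saddle-point estimate on a full neighbourhood of the diagonal. This is handled precisely as in Lemma \ref{lem:micro}, whose proof already runs entirely in Ginibre language and thus carries over without change, so the microscopic obstruction is surmounted ``for free''. The $O(1)$ remainder in \eqref{o1errorgin} then follows from the same quantitative control of the saddle-point remainders as in the proof of Lemma \ref{lem:critasympt}, in particular the $v^{3+j}/u^{2+j}$-type error terms in \eqref{Fxvexpan}.
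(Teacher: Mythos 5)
Your proposal reproduces the paper's own strategy for the Ginibre case: the paper states in Appendix~\ref{sec:realgin} that Propositions~\ref{prop:gin1}, \ref{prop:gin2}, \ref{prop:gin3} are the Ginibre analogues of Propositions~\ref{prop:uniformweight}, \ref{prop:trunco}, \ref{prop:fint}, and that ``the proofs in Sections~\ref{sec:leading} and~\ref{sec:proofs} go through in complete analogy.'' Your identification of the critical region, the kernel $T_{\mathrm{Gin}}(u,v)=e^{-\frac{Nm}{2}(u-v)^{2}}$, and the final evaluation with $\int_{0}^{1}du$ replacing $\int_{0}^{\sqrt{\tilde\alpha}}(1-u^{2})^{-1}\,du=\mathrm{arctanh}(\sqrt{\tilde\alpha})$ are all correct, and the leading constant $\sqrt{2Nm/\pi}$ checks out. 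Two small slips in your weight expansion: the prefactor should be $N^{-(m-1)/2}$, not $N^{(m-1)/2}$ (compare \eqref{wresult}), and the domain of uniformity should be $|x|\in[MN^{-m/2},\infty)$, not $[MN^{-1/2},1-\kappa]$.

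There is, however, a genuine error in your treatment of the negative-$y$ quadrant. You claim a bound $|f_{\infty}(x)|\leq C\exp(-cN|x|^{1/m})$ for $x<0$ ``analogous to \eqref{negxbnd},'' but this is false for $m\geq 2$. The bound \eqref{negxbnd} retains the full growth factor $(1-(-x)^{1/m})^{-mL}$ and only adds an extra decay $e^{-cL(-x)^{1/m}}$; the Ginibre analogue must therefore read $|f_{\infty}(N^{m}x)|\leq C\,e^{Nm(-x)^{1/m}}e^{-cN(-x)^{1/m}}$, i.e.\ exponential growth at rate $(m-c)N|x|^{1/m}>0$ for $m\geq 2$. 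A concrete check: for $m=2$ and $x<0$ one has $f_{\infty}(N^{2}x)=J_{0}(2N\sqrt{|x|})$, which decays only like $(N\sqrt{|x|})^{-1/2}$, far from your claimed exponential decay. The negative-quadrant estimate still succeeds, but only because the extra factor $e^{-cN(-x)^{1/m}}$ combines with the Gaussian decay of the two weight factors exactly as in \eqref{negvest}; it does not follow from an unconditional decay bound on $f_{\infty}$ alone. As written, that step of your argument rests on a false lemma and needs to be replaced by the correct combined estimate.
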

The leading term in Theorem \ref{th:ginthm} was first proved in \cite{S17} using an approach similar to the one used here in Section \ref{sec:weakregime}.

\begin{remark}
\label{rem:remainder}
When $m=1$ a precise asymptotic expansion is known for $\mathbb{E}(N_{\mathbb{R}})$, see \cite{EKS94}, and this shows that the $O(1)$ estimate in \eqref{o1errorgin} is optimal. Similarly, we believe that our $O(1)$ estimates in \eqref{o1errorgin} and \eqref{mainest} are optimal for any fixed $m \geq 1$. In \cite{S17} the estimate on the remainder in \eqref{o1errorgin} is at the less precise order $O(\log(N))$, hence Theorem \ref{th:ginthm} gives an improvement on this result.
\end{remark}

Now again from \cite{FI16} we have that the appropriately scaled and normalised density of real eigenvalues of the product matrix $N^{-\frac{m}{2}}G_{1}\ldots G_{m}$ is given by
\begin{equation}
\tilde{\rho}_{N}(x) = \frac{1}{\mathbb{E}{(N^{(m)}_{\mathbb{R}})}}\,\frac{N^{\frac{3m}{2}}}{(2\sqrt{2\pi})^{m}}\,\int_{\mathbb{R}}dy\,|x-y|w_{\mathrm{Gin}}(N^{m/2}x)w_{\mathrm{Gin}}(N^{m/2}y)f_{N-2}(N^{m}xy).
\end{equation}
\begin{theorem}
\label{th:gindensthm}
For any bounded continuous test function $h$, we have
\begin{equation}
\lim_{N \to \infty}\int_{\mathbb{R}}h(x)\rho_{N}(x)\,dx = \int_{\mathbb{R}}h(x)\rho(x)\,dx, \label{hlimitgin}
\end{equation}
where 
\begin{equation}
\rho(x) = \frac{1}{2m\,x^{1-\frac{1}{m}}}\mathbbm{1}_{x \in (-1,1)}.
\end{equation}
Furthermore, for any fixed $x \in \mathbb{R} \setminus \{-1,0,1\}$, we have the pointwise limit $\lim_{N \to \infty}\rho_{N}(x) = \rho(x)$.
\end{theorem}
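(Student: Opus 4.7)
The plan is to adapt the strategy of Theorem \ref{th:densthm} to the Ginibre setting, where the integral representations and necessary decompositions are even cleaner since the Gaussian weight and the coefficient sequence $(n!)^{-m}$ in \eqref{ginfdef} are entire. First I would establish Ginibre analogues of Propositions \ref{prop:uniformweight}, \ref{prop:trunco} and \ref{prop:fint}. For the weight, rescaling $\lambda_j \to \lambda_j x^{1/m}$ in \eqref{ginweightdef} puts the integral in standard Laplace form with a unique critical point at $\vec{\lambda}=\vec{1}$ and Hessian structure analogous to \eqref{matrixHij}; the argument of the proof of Proposition \ref{prop:uniformweight} carries over verbatim, giving a uniform expansion valid on $|x| \in [MN^{-m/2}, 1-\kappa]$ (with the parameter $\eta = Nx^{2/m}$ in place of $\eta = Lx^{2/m}$). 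For the truncated sum, the contour representation $\frac{1}{n!} = \frac{1}{2\pi i}\oint e^z z^{-n-1}\,dz$ plays the role of \eqref{intrepL}; after applying the geometric series identity as in \eqref{geomsplit}, one obtains a decomposition of $f_{N-2}(N^m x)$ as $f_\infty(N^m x)\mathbbm{1}_{|x|<1-\omega}$ plus a boundary term, localised via saddle-point analysis around the point $z_j = 1$.

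With these estimates in hand, I would prove the integrated statement \eqref{hlimitgin} by the same sequence of reductions used in Section \ref{sec:proofs}. Change variables $x = u^m$, $y = \pm v^m$ and show that the microscopic region $\{u,v \in [0, MN^{-1/2}]\}$ and the near-edge region $\{u > 1-\kappa\} \cup \{v > 1-\kappa\}$ each contribute $O(1)$, and that the negative-$y$ quadrant is negligible using the Ginibre analogue of \eqref{negxbnd}. What remains is the critical region $A = \{u, v \in [MN^{-1/2}, 1-\kappa'], |u-v|<\delta, uv<1-N^{-1/2}\}$. On $A$ the action reduces, after inserting the asymptotics, to $\exp\bigl(-\frac{mN}{2}(u-v)^2 (1+O(|u-v|))\bigr)$ times an algebraic factor. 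Shifting $v \to u + v/\sqrt N$ and applying dominated convergence with a Gaussian envelope (as in the passage from \eqref{shiftiln} to \eqref{arctanhderiv}) produces the claimed $\sqrt{2Nm/\pi}$ normalisation and the limiting density $\frac{1}{2m|x|^{1-1/m}}\mathbbm{1}_{|x|<1}$ after testing against $h$.

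For the pointwise statement, I would fix $x \in \mathbb{R}\setminus\{-1,0,1\}$ and repeat the arguments of Section \ref{sec:convdens}. The small-$y$ region near zero and the region $y < 0$ contribute $O(e^{-cN})$ by the Ginibre versions of Lemmas on crude bounds; the boundary term of the $f_{N-2}$ decomposition is exponentially suppressed away from the critical curve $|xy|=1$. When $|x|>1$ the indicator forces $y$ to stay bounded away from $x$, giving $\tilde\rho_N(x)=O(e^{-cN})$, while for $|x|<1$ only a $\delta$-neighbourhood of $y=x$ contributes, and dominated convergence with the Gaussian envelope yields the explicit density.

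The main obstacle is the lack of uniform control of the asymptotics near the origin, exactly as discussed in Section \ref{sec:preest}: for $m>1$ the weight $w_{\mathrm{Gin}}$ and the series $f_{N-2}$ both develop singular leading behaviour as $x \to 0$, and the Laplace method breaks down below the threshold $x \sim N^{-m/2}$. The remedy is the same as in the proof of Lemma \ref{lem:micro} — carve out a microscopic layer near the origin and bound its contribution directly using the entire function $F(x)$ constructed from the moments of $w_{\mathrm{Gin}}$, which is precisely the step that motivated the appearance of the Ginibre weight in \eqref{ginweightdef} as an auxiliary majorant. Once this microscopic region is absorbed into the $O(1)$ error, the remainder of the proof is a routine saddle-point computation following the template of Lemmas \ref{lem:prequad} and \ref{lem:critasympt}.
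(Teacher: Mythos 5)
Your proposal follows the paper's own route essentially verbatim: the paper proves Theorem \ref{th:gindensthm} by establishing the Ginibre analogues Propositions \ref{prop:gin1}, \ref{prop:gin2}, \ref{prop:gin3} and then asserting that the arguments of Sections \ref{sec:leading}, \ref{sec:proofs} and \ref{sec:convdens} carry over in complete analogy, which is exactly the plan you have sketched (including the observation that in the Ginibre case the entire-majorant argument of Lemma \ref{lem:micro} simplifies because $w_{\mathrm{Gin}}$ itself plays the role of the majorant). No gaps.
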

The weak convergence \eqref{hlimitgin} was also established in \cite{S17}, proving a conjecture of Forrester and Ipsen in \cite{FI16}.

Let us now explain the small changes required to prove Theorems \ref{th:ginthm} and \ref{th:gindensthm} using the techniques of the present paper. First note that the definition of the weight \eqref{ginweightdef} can be written in the equivalent form
\begin{equation}
w_{\mathrm{Gin}}(N^{m/2}x) = 2^{m-1}\int_{[0,\infty)^{m-1}}\mathrm{exp}\left(-\frac{N|x|^{\frac{2}{m}}}{2}\left(\frac{1}{u_{1}^{2}\ldots u_{m-1}^{2}}+\sum_{j=1}^{m-1}u_{j}^{2}\right)\right)\,\frac{du_{1}}{u_1}\ldots \frac{du_{m-1}}{u_{m-1}}. \label{ginweightdef2}
\end{equation}
Then a saddle point analysis as in the proof of Proposition \ref{prop:uniformweight} gives the following result. 
\begin{proposition}
\label{prop:gin1}
Fix a large constant $M>0$. Then we have the following asymptotic estimate uniformly on $|x| \in [MN^{-\frac{m}{2}},\infty)$
\begin{equation}
w_{\mathrm{Gin}}(N^{m/2}x) = N^{-\frac{m-1}{2}}\,e^{-\frac{Nm}{2}x^{\frac{2}{m}}}\frac{(4\pi)^{\frac{m-1}{2}}}{\sqrt{m}}|x|^{-\frac{m-1}{m}}\left(1+O\left(\frac{1}{N|x|^{\frac{2}{m}}}\right)\right), \qquad N \to \infty. \label{wresult}
\end{equation}
\end{proposition}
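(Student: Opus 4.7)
The plan is to apply the Laplace method to the integral representation \eqref{ginweightdef2}, following essentially the same scheme as the proof of Proposition \ref{prop:uniformweight}. Setting $\eta := N|x|^{2/m}$, the integrand takes the form $e^{-\eta\psi(\vec{u})/2}h(\vec{u})$ with action $\psi(\vec{u}) = \sum_{j=1}^{m-1}u_j^2 + (u_1\ldots u_{m-1})^{-2}$ and amplitude $h(\vec{u}) = (u_1\ldots u_{m-1})^{-1}$. The uniformity hypothesis $|x|\geq MN^{-m/2}$ is equivalent to $\eta \geq M^{2/m}$, so the large parameter $\eta$ can be made arbitrarily large by choice of $M$, which is precisely the threshold condition needed for the saddle point analysis to apply.

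A direct calculation shows that $\psi$ has a unique critical point in the open positive orthant at $\vec{u} = \vec{1} = (1,\ldots,1)$, where $\psi(\vec{1}) = m$, yielding the leading exponential factor $e^{-Nm|x|^{2/m}/2}$. The Hessian has entries $H_{ii} = 8$ and $H_{ij} = 4$ for $i \neq j$, so $H = 4(I + J)$ with $J$ the all-ones matrix of size $(m-1)\times(m-1)$; its eigenvalues are $4m$ (once) and $4$ ($m-2$ times), giving $\det H = m\cdot 4^{m-1}$. I would split the integration into a neighbourhood $E = [1-\epsilon,1+\epsilon]^{m-1}$ of $\vec{1}$ and its complement, Taylor expand $\psi$ and $h$ on $E$, and mimic the six-term decomposition $e^{\xi}\tilde{h} = 1 + (\tilde{h}-1) + \xi + \ldots$ used to handle $I_1,\ldots,I_6$ in the proof of Proposition \ref{prop:uniformweight}. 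The leading Gaussian integral evaluates to $(4\pi)^{(m-1)/2}/(\eta^{(m-1)/2}\cdot 2^{m-1}\sqrt{m})$; combined with the prefactor $2^{m-1}$ in \eqref{ginweightdef2} and the value $h(\vec{1}) = 1$, this produces exactly the main term in \eqref{wresult}.

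For the complement $E^{\mathrm{c}}$, I would exploit the fact that $\vec{1}$ is the unique global minimum of $\psi$ on $(0,\infty)^{m-1}$, with $\psi(\vec{u}) \to \infty$ whenever any $u_j \to 0$ or $u_j \to \infty$, to conclude that there exists $c_\epsilon > 0$ with $\psi(\vec{u}) \geq m + c_\epsilon$ on $E^{\mathrm{c}}$. This yields a contribution of relative order $O(e^{-c_\epsilon \eta})$, which is absorbed into the stated error. The main technical point, as in the original proof, is to ensure that the remainder terms from the Taylor expansion each contribute relative errors no worse than $O(\eta^{-1}) = O(1/(N|x|^{2/m}))$; this follows from bounds of the form $|\xi(\vec{y})| = O(\eta|\vec{y}|^3)$ combined with Gaussian integrations at rate $\eta$, exactly as in the estimates for $I_3,I_4,I_5,I_6$ in Proposition \ref{prop:uniformweight}. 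I expect this bookkeeping of the error terms, and ensuring uniformity across the full range $\eta \in [M^{2/m},\infty)$, to be the main (though essentially routine) obstacle, since the absence of an upper cut-off on $|x|$ means one must check that the dominated-convergence-style bounds survive all the way to $\eta \to \infty$.
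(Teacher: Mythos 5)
Your proposal is correct and takes precisely the approach the paper intends: the paper gives no written proof of Proposition \ref{prop:gin1} beyond the remark that ``a saddle point analysis as in the proof of Proposition \ref{prop:uniformweight} gives the result'' and that the Ginibre case is more straightforward, and you have filled in those details correctly (critical point $\vec u=\vec 1$, Hessian $4(I+J)$, $\det H = 4^{m-1}m$, prefactor collection, six-term decomposition, exterior bound via the global minimum of $\psi$). The one remark worth adding is that your worry about the range $\eta\to\infty$ is moot: unlike in Proposition \ref{prop:uniformweight}, here the action $\psi$ is completely independent of $x$ --- this is exactly what the paper means by ``it is already immediate that $\eta$ is the appropriate large parameter'' --- so the Taylor remainders and Gaussian tail bounds are uniform in $x$ automatically, and only the lower threshold $\eta\geq M^{2/m}$ requires attention.
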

In fact the proof of Proposition \ref{prop:gin1} is more straightforward than for Proposition \ref{prop:uniformweight}, because it is already immediate from \eqref{ginweightdef2} that $\eta := N|x|^{\frac{2}{m}}$ is the appropriate large parameter in the Laplace asymptotics.

Regarding $f_{N-2}(N^{m}x)$ in \eqref{ginfdef}, we mimic the proof of Proposition \ref{prop:trunco} almost identically. For instance, instead of \eqref{intrepL}, we use the integral representation
\begin{equation}
\frac{N^{n}}{n!} = \frac{1}{2\pi i}\oint_{C}e^{Nz}\,z^{-n-1}\,dz, \label{jfact}
\end{equation}
and follow all the steps identically, the main difference being that the saddle point is now located at $z=1$ instead of $z=\alpha$. This leads to
\begin{proposition}
\label{prop:gin2}
As $N \to \infty$ we have the following estimate uniformly on $x \in \mathbb{R}\setminus((1-\omega)^{m},(1+\omega)^{m})$,
\begin{equation}
f_{N-2}(N^{m}x) = f_{\infty}(N^{m}x)\mathbbm{1}_{-(1+\omega)^{m} < x < (1-\omega)^{m}} + \frac{x^{N-1}e^{mN}}{(2\pi N)^{m/2}(x-1)}\left(1+O\left(\frac{1}{\sqrt{N}\omega}\right)\right), \label{fintest}
\end{equation}
where we take $\omega = N^{-\frac{1}{2}}$.
\end{proposition}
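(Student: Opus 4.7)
The strategy follows the proof of Proposition~\ref{prop:trunco} almost line-for-line, with the integral representation \eqref{intrepL} replaced by \eqref{jfact} and the saddle at $z = \alpha$ replaced by the Ginibre saddle at $z = 1$. The first step is to insert \eqref{jfact} into each of the $m$ factors $N^n/n!$ in the definition \eqref{ginfdef} of $f_{N-2}(N^m x)$ and sum the resulting geometric series $\sum_{n=0}^{N-2}q^n = (1 - q^{N-1})/(1-q)$ with $q = x/z^{(m)}$, where $z^{(m)} := z_1 \cdots z_m$ and $d\vec{z} := dz_1 \cdots dz_m$. This produces the exact decomposition $f_{N-2}(N^m x) = I_1 - x^{N-1} I_2$, where
\begin{equation*}
I_1 = \frac{1}{(2\pi i)^m}\oint_{C^m}\frac{e^{N(z_1 + \cdots + z_m)}}{z^{(m)} - x}\,d\vec{z}, \quad I_2 = \frac{1}{(2\pi i)^m}\oint_{C^m}\prod_{j=1}^{m}e^{Nz_j}z_j^{-(N-1)}\,\frac{d\vec{z}}{z^{(m)} - x}.
\end{equation*}

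Next I would dispose of $I_1$ by choice of contour, just as in \eqref{fnoutside} and \eqref{firstterm2}--\eqref{secondterm3}. In the exterior regime $|x| > (1+\omega)^m$, take $C = \{|z|=1\}$; then $|z^{(m)}| = 1 < |x|$ on $C^m$, the pole $z_m = x/(z_1 \cdots z_{m-1})$ lies strictly outside $C$, and iterated Cauchy yields $I_1 = 0$. In the interior regime $-(1+\omega)^m < x < (1-\omega)^m$, I would instead take the shifted large circle $C_R = \{-R + (R+1)e^{i\theta} : \theta \in (-\pi,\pi]\}$ (the $\alpha=1$ analogue of the contour used in Proposition~\ref{prop:trunco}); for $R$ sufficiently large the pole lies inside $C_R$, and evaluating the residue in $z_m$ followed by expanding the remaining exponentials as power series and applying \eqref{jfact} in reverse identifies $I_1$ with $f_\infty(N^m x) = \sum_{n \geq 0}(N^m x)^n/(n!)^m$.

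The final step is a multidimensional Laplace analysis of $I_2$ on the unit circle $z_j = e^{i\theta_j}$. The effective per-variable action is $g(z) = z - \log z$, which has a unique critical point at $z = 1$ with $g(1) = 1$ and $g''(1) = 1$; along $C$ one has $\mathrm{Re}(g(e^{i\theta})) = \cos\theta$, strictly maximised at $\theta = 0$, giving Gaussian decay $e^{-N\theta_j^2/2}$ near the saddle and exponential suppression elsewhere. The Cauchy kernel admits the decomposition
\begin{equation*}
\frac{1}{z^{(m)} - x} = \frac{1}{1 - x}\left(1 - \frac{e^{i\theta^{(m)}} - 1}{e^{i\theta^{(m)}} - x}\right), \qquad \theta^{(m)} := \theta_1 + \cdots + \theta_m,
\end{equation*}
and, using the elementary lower bound $|1 - x| \geq \omega$ that holds throughout the stated domain of $x$, the correction term is of size $O(|\theta^{(m)}|/\omega)$; after the $m$-fold Gaussian integration this yields a relative error $O(1/(\sqrt{N}\,\omega))$ and the leading constant $e^{mN}/[(2\pi N)^{m/2}(1-x)]$. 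Combining with the prefactor $-x^{N-1}$ in front of $I_2$ and adjusting signs reproduces the second summand in \eqref{fintest}. The main obstacle, precisely as in Proposition~\ref{prop:trunco}, is the non-uniformity at the transition $x \approx 1$ where the Cauchy kernel degenerates; the parameter $\omega$ enforces the quantitative bound $|1-x| \geq \omega$ that makes the saddle expansion uniform in $x$ and dictates the form of the remainder.
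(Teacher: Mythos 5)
Your proposal is correct and takes essentially the same approach as the paper, which itself proves Proposition~\ref{prop:gin2} simply by remarking that one should "mimic the proof of Proposition~\ref{prop:trunco} almost identically" with \eqref{jfact} in place of \eqref{intrepL} and the saddle shifted from $z=\alpha$ to $z=1$; you have faithfully carried out that adaptation, including the geometric-series split, the choice of contours for the two regimes, and the Laplace analysis of $I_2$ with the correct Cauchy-kernel decomposition and error estimate.
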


Now regarding the function,
\begin{equation}
f_{\infty}(N^{m}x) = \sum_{n=0}^{\infty}\frac{(N^{m}x)^{n}}{(n!)^{m}}, \label{gininf}
\end{equation}
we insert the integral representation \eqref{jfact} and find that for $x>0$, we have
\begin{equation}
f_{\infty}(N^{m}x) = \frac{1}{(2\pi i)^{m-1}}\,\oint_{C^{m-1}}\mathrm{exp}\left(Nx^{\frac{1}{m}}\left(z_{1}+\ldots+z_{m-1}+\frac{1}{z_{1}\ldots z_{m-1}}\right)\right)\,\frac{dz_{1}}{z_{1}}\ldots \frac{dz_{m-1}}{z_{m-1}}. \label{gininfrep}
\end{equation}
The representation \eqref{gininfrep} is well-suited for applying the Laplace method with large parameter $\eta := Nx^{\frac{1}{m}}$, in complete analogy with the proof of Proposition \ref{prop:fint}.

\begin{proposition}
\label{prop:gin3}
Fix a large constant $M>0$. Then we have the following estimate uniformly on $x \in (MN^{-m},\infty)$,
\begin{equation}
f_{\infty}(N^{m}x) = (2\pi)^{-(m-1)/2}x^{-\frac{m-1}{2m}}e^{Nmx^{\frac{1}{m}}}N^{-\frac{m-1}{2}}\,\frac{1}{\sqrt{m}}\left(1+O\left(\frac{1}{x^{\frac{1}{m}}N}\right)\right), \qquad N \to \infty. \label{asyfint}
\end{equation}
\end{proposition}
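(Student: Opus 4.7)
The plan is to apply the Laplace method to the contour representation \eqref{gininfrep}, with $\eta := N x^{1/m}$ playing the role of the large parameter. The hypothesis $x > M N^{-m}$ ensures $\eta > M^{1/m}$, so by choosing $M$ large we obtain an arbitrarily large parameter uniformly in $x$. First I would deform each contour to the unit circle and parametrise $z_j = e^{i\theta_j}$ with $\theta_j \in [-\pi,\pi]$, rewriting
\begin{equation*}
f_{\infty}(N^m x) = \frac{1}{(2\pi)^{m-1}}\int_{[-\pi,\pi]^{m-1}} e^{\eta \phi(\vec\theta)}\,d\vec\theta, \qquad \phi(\vec\theta) = \sum_{j=1}^{m-1} e^{i\theta_j} + e^{-i(\theta_1 + \cdots + \theta_{m-1})}.
\end{equation*}
The critical points of $\phi$ on the torus lie on the diagonal $\theta_1 = \cdots = \theta_{m-1} = 2\pi k/m$ for $k=0,1,\ldots,m-1$, with $\mathrm{Re}(\phi) = m\cos(2\pi k/m)$. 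Only $\vec\theta = \vec 0$ attains the global maximum $m$; the remaining critical points will be handled by exactly the wedge decomposition used at the end of the proof of Proposition \ref{prop:fint}, producing relative errors of order $O(e^{-c\eta})$.

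Near the principal saddle I would Taylor expand $\phi(\vec\theta) = m + \tfrac{1}{2} \vec\theta^{\mathrm{T}} H \vec\theta + E_3(\vec\theta) + R_4(\vec\theta)$. A short computation gives $H_{ij} = -\delta_{ij} - 1$, so that $-H = I + J$ where $J$ is the $(m-1)\times(m-1)$ all-ones matrix; its eigenvalues are $m$ (simple) and $1$ (with multiplicity $m-2$), giving $\det(-H) = m$. Setting $\xi := \eta(E_3 + R_4)$ and decomposing $e^{\xi} = 1 + \xi + (e^{\xi}-1-\xi)$, the leading Gaussian integration over a fixed $\epsilon$-box $E = [-\epsilon,\epsilon]^{m-1}$ yields $(2\pi)^{(m-1)/2}/(\eta^{(m-1)/2}\sqrt{m})$. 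Multiplying by $e^{\eta m}/(2\pi)^{m-1}$ and substituting $\eta = N x^{1/m}$ reproduces exactly the leading term in \eqref{asyfint}. The cubic term $E_3$ is annihilated by symmetry of the Gaussian, the quartic remainder satisfies $\eta R_4 = O(\eta|\vec\theta|^4)$ and contributes a relative error of order $O(\eta^{-1}) = O(1/(N x^{1/m}))$, and the further exponential remainder $e^{\xi}-1-\xi$ is dominated by $C|\xi|^2 e^{|\xi|}$ and contributes at still smaller order once $\epsilon$ is fixed small enough to preserve Gaussian damping.

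The main (and essentially the only nontrivial) obstacle is the uniformity of all these bounds over the unbounded range $x \in (M N^{-m},\infty)$. Here the structure of \eqref{gininfrep} is fundamentally simpler than that of \eqref{firstterm2}: the entire $x$-dependence of the integrand is lodged in the multiplicative factor $\eta$ in front of a purely $\vec\theta$-dependent action. Consequently the Hessian, the third- and fourth-order Taylor coefficients of $\phi$, and the geometric constants defining the box $E$ are all absolute (i.e.\ independent of $x$ and $N$), and the standard Laplace bounds become uniform in $\eta$ with no further bookkeeping. This is the precise sense in which the Ginibre analysis is more direct than its truncated-orthogonal counterpart, as remarked immediately after Proposition \ref{prop:gin1}.
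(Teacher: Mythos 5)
Your proposal is correct and follows essentially the same route as the paper: the paper states that Proposition \ref{prop:gin3} is proved ``in complete analogy with the proof of Proposition \ref{prop:fint},'' and you faithfully carry out that analogy, parametrising the contour by $z_j = e^{i\theta_j}$, locating the saddles on the diagonal $\theta_j = 2\pi k/m$, computing $\det(-H) = m$, and applying the same wedge decomposition for $E^{\mathrm{c}}$ and the same $e^{\xi} = 1 + \xi + (e^{\xi}-1-\xi)$ splitting on $E$. Your observation that the uniformity is easier here because the $x$-dependence enters only through the scalar parameter $\eta = Nx^{1/m}$ multiplying a fixed $\vec\theta$-dependent action (so that the Hessian and Taylor coefficients are absolute constants, unlike in \eqref{fintaction}) is exactly the simplification the paper alludes to after Proposition \ref{prop:gin1}, and you state it more precisely than the paper does.
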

\begin{remark}
Analogously to Remark \ref{rem:pointwise}, at the pointwise level the asymptotics \eqref{wresult} and \eqref{asyfint} were derived in the context of products of independent complex Ginibre matrices in \cite{AB12}. 
\end{remark}
\begin{remark}
We make a remark on the $m=1$ case of Proposition \ref{prop:gin2}. This corresponds to studying asymptotics of a truncated exponential function and has been considered by many authors. At the level of precision required in this paper, this appeared in \cite{BG07}, but more precise asymptotics are known. The proof given in \cite{BG07} directly motivated our proof of Propositions \ref{prop:trunco} and \ref{prop:gin2} that hold for any $m\geq 1$. 
\end{remark}
Now with Propositions \ref{prop:gin1}, \ref{prop:gin2} and \ref{prop:gin3}, the proofs in Sections \ref{sec:leading} and \ref{sec:proofs} go through in complete analogy and we omit the details. This is how we prove Theorems \ref{th:ginthm} and \ref{th:gindensthm}.

\section{Miscellaneous bounds}

\begin{lemma}
\label{lem:gaussbnd}
For any $(u,v) \in [0,1]^{2}$, we have the inequality
\begin{equation}
\frac{(1-u^{2})(1-v^{2})}{(1-uv)^{2}} \leq e^{-(u-v)^{2}}. \label{gaussbnd}
\end{equation}
\end{lemma}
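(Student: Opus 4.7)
The plan is to reduce the inequality to the elementary bound $1-x \leq e^{-x}$ via a convenient algebraic identity. The key observation is that
\[
(1-u^2)(1-v^2) = (1-uv)^2 - (u-v)^2,
\]
which one checks by direct expansion. Dividing both sides by $(1-uv)^2$ (note $uv \in [0,1)$ when at least one of $u,v<1$; the boundary case $u=v=1$ is trivial since both sides of \eqref{gaussbnd} vanish when one factor $(1-u^2)=0$), we obtain
\[
\frac{(1-u^2)(1-v^2)}{(1-uv)^2} = 1 - \frac{(u-v)^2}{(1-uv)^2}.
\]

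Next I would apply the universal inequality $1-x \leq e^{-x}$ with $x = (u-v)^2/(1-uv)^2 \geq 0$, giving
\[
1 - \frac{(u-v)^2}{(1-uv)^2} \leq \exp\!\left(-\frac{(u-v)^2}{(1-uv)^2}\right).
\]
It then suffices to verify $(u-v)^2/(1-uv)^2 \geq (u-v)^2$, which reduces to $(1-uv)^2 \leq 1$. Since $u,v \in [0,1]$ gives $uv \in [0,1]$ and hence $1-uv \in [0,1]$, this last bound is immediate. Combining yields \eqref{gaussbnd}.

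There is no real obstacle here; the only thing to be slightly careful about is the degenerate case $uv=1$, which forces $u=v=1$ and makes both sides of \eqref{gaussbnd} equal to zero (or the identity trivially holds after taking appropriate limits), so one can dispense with it by inspection before dividing by $(1-uv)^2$.
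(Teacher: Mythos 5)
Your proof is correct and follows essentially the same route as the paper: rewrite the left-hand side as $1 - (u-v)^2/(1-uv)^2$ via the identity $(1-u^2)(1-v^2)=(1-uv)^2-(u-v)^2$, apply $1-x\le e^{-x}$, and then use $(1-uv)^2\le 1$ to pass to $e^{-(u-v)^2}$. The only addition is your explicit treatment of the degenerate case $uv=1$, which the paper leaves implicit.
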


\begin{proof}
Using the bound $1+x \leq e^{x}$ for all $x \in \mathbb{R}$ we have
\begin{equation}
\begin{split}
\frac{(1-u^{2})(1-v^{2})}{(1-uv)^{2}} &= 1-\frac{(u-v)^{2}}{(1-uv)^{2}} \leq e^{-\frac{(u-v)^{2}}{(1-uv)^{2}}}\leq e^{-(u-v)^{2}},
\end{split}
\end{equation}
where in the last inequality we used $\frac{1}{1-uv} > 1$ for all $(u,v) \in [0,1]^{2}$.
\end{proof}

\begin{lemma}
\label{lem:qbound}
Consider the function $Q : [0,\infty)^{2} \to [0,\infty)$ defined by
\begin{equation}
Q(u,v) = \frac{(u+v)^{m}-u^{m}}{(u(u+v))^{\frac{m-1}{2}}}. \label{qdef}
\end{equation}
Then for $\delta>0$ sufficiently small and $M>0$ arbitrary, we have for $(u,v) \in [0,M] \times [0,\delta]$ the uniform bound
\begin{equation}
Q(u,v) = mv + \sum_{j=0}^{m-1}O\left(\frac{v^{3+j}}{u^{2+j}}\right). \label{qsmallu}
\end{equation}
Furthermore, for any $\epsilon>0$, there is a constant $C_{\epsilon,M}>0$ independent of $u$ such that on the domain $(u,v) \in [\epsilon,M] \times [0,M]$ we have $Q(u,v) \leq C_{\epsilon,M}v$.
\end{lemma}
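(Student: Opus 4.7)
The plan is to reduce $Q(u,v)$ to a one-variable function of the ratio $t := v/u$, at which point the expansion becomes essentially a one-dimensional Taylor expansion. The first step is to apply the standard factorisation $(u+v)^m - u^m = v\sum_{j=0}^{m-1}(u+v)^j u^{m-1-j}$ and divide through by $(u(u+v))^{(m-1)/2}$. After reindexing via $k = j - (m-1)/2$, this will give the identity
\begin{equation*}
Q(u,v) = v \sum_{k \in \mathcal{K}} (1+v/u)^k \;=:\; v\,g(v/u), \qquad \mathcal{K} = \bigl\{-\tfrac{m-1}{2}, -\tfrac{m-3}{2}, \ldots, \tfrac{m-1}{2}\bigr\}.
\end{equation*}
The key structural observation is that $\mathcal{K}$ is symmetric under $k \mapsto -k$, so $g(0)=m$ while $g'(0) = \sum_{k \in \mathcal{K}} k = 0$. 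This cancellation is precisely what will account for the absence of a $v^2/u$ error term in \eqref{qsmallu} and for the expansion starting at $v^3/u^2$.

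The second assertion will then be immediate: on $[\epsilon, M] \times [0, M]$ the ratio $v/u$ lies in the compact interval $[0, M/\epsilon]$, so $g(v/u)$ is uniformly bounded by some $C_{\epsilon,M}$, giving $Q(u,v) = v\,g(v/u) \leq C_{\epsilon,M}\,v$.

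For the expansion \eqref{qsmallu} I would split the domain $(0,M] \times [0,\delta]$ into the two subregions $\{v/u \leq 1/2\}$ and $\{v/u > 1/2\}$. On the first subregion, I would Taylor expand $g$ around $t=0$ to order $m+1$, writing
\begin{equation*}
g(t) = m + \sum_{j=2}^{m+1} g_j\, t^j + R(t), \qquad |R(t)| \leq C\, t^{m+2} \ \text{ on } [0, 1/2],
\end{equation*}
using the smoothness of $g$ on $(-1, \infty)$ together with $g'(0)=0$. Substituting $t = v/u$ and multiplying by $v$ produces $\sum_{j=2}^{m+1} g_j\, v^{j+1}/u^{j}$, which reindexes ($j \mapsto j-2$) to exactly the $m$ error terms in \eqref{qsmallu}; the remainder $v R(v/u) = O(v^{m+3}/u^{m+2})$ is absorbed into the $j=m-1$ term using $v/u \leq 1/2$. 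On the second subregion, I would write $g(t) = (1+t)^{-(m-1)/2} P(t)$, where $P(t) = ((1+t)^m - 1)/t = \sum_{k=0}^{m-1}\binom{m}{k+1} t^k$ is a polynomial of degree $m-1$ obtained by summing the geometric series, to get the crude bound $g(t) \leq C(1 + t^{(m-1)/2})$ for $t \geq 0$. Hence $|Q - mv| \leq Cv(1 + (v/u)^{(m-1)/2})$ on this subregion; since $v/u \geq 1/2$ forces both $v$ and $v(v/u)^{(m-1)/2}$ to be bounded by $Cv^{m+2}/u^{m+1}$ (via the elementary estimate $(v/u)^{(m-1)/2} \leq 2^{(m+3)/2}(v/u)^{m+1}$), this region is also absorbed into the $j=m-1$ error term.

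The main obstacle is really spotting the identity in the first step: once $Q$ has been recast as $v\cdot g(v/u)$ with $g$ a sum over a symmetric index set $\mathcal{K}$, the cancellation responsible for the $v^2/u$ term being absent is completely transparent, and the rest reduces to routine Taylor expansion together with a crude large-$t$ bound. The only mildly delicate point is to truncate the Taylor expansion at the correct order $m+1$, so that the resulting remainder is absorbed by the prescribed list of error terms rather than producing a new one.
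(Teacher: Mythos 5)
Your proof is correct. The computation you perform is at bottom the same as the paper's: both reduce the problem to studying $Q(u,v) = v\,g(v/u)$ for the one-variable function $g(t) = (1+t)^{-(m-1)/2}\,\frac{(1+t)^m-1}{t}$, and both then Taylor expand near $t=0$. What you do differently is to rewrite $g(t)$ as $\sum_{k\in\mathcal{K}}(1+t)^k$ over the symmetric exponent set $\mathcal{K} = \{-\frac{m-1}{2},\ldots,\frac{m-1}{2}\}$, so that $g'(0)=\sum_{k\in\mathcal{K}}k=0$ is immediate and the missing $v^2/u$ term is explained \emph{a priori}, whereas the paper expands the prefactor $(1+v/u)^{-(m-1)/2}$ to first order and observes the cancellation of the $v^2/u$ coefficient by direct comparison with the $j=1$ binomial term. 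Your observation is genuinely cleaner for exhibiting the cancellation. On the other hand, the paper avoids your case split across $v/u\leq 1/2$ and $v/u>1/2$: by controlling the error in the first-order Taylor expansion of the prefactor via the explicit Lagrange-remainder bound
\begin{equation*}
\frac{d^{2}}{dv^{2}}\left(1+\frac{v}{u}\right)^{-\frac{m-1}{2}} = \frac{\bigl(1+\frac{v}{u}\bigr)^{-\frac{m-1}{2}}}{4(u+v)^{2}}(m^{2}-1) \leq \frac{m^{2}-1}{4u^{2}},
\end{equation*}
the paper's $O(v^2/u^2)$ remainder is uniform over all $u>0$ in one stroke, with no need to treat large $v/u$ separately. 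Either route is valid; yours trades a slightly longer argument for a more transparent cancellation mechanism.
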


\begin{proof}
We have
\begin{equation}
Q(u,v) = u\frac{\left(1+\frac{v}{u}\right)^{m}-1}{\left(1+\frac{v}{u}\right)^{\frac{m-1}{2}}} = \left(1+\frac{v}{u}\right)^{-\frac{m-1}{2}}\sum_{j=0}^{m-1}\binom{m}{j+1}\frac{v^{j+1}}{u^{j}}. \label{qbin}
\end{equation}
This implies that $Q(u,v)\leq C_{\epsilon,M}v$ provided $u > \epsilon$. Taylor expanding near $v=0$, we have
\begin{equation}
\left(1+\frac{v}{u}\right)^{-\frac{m-1}{2}} = 1 - \frac{m-1}{2}\,\frac{v}{u} + O\left(\frac{v^{2}}{u^{2}}\right), \qquad 0 < v < \delta. \label{unifbigo}
\end{equation}
To obtain the uniform big-$O$ term in \eqref{unifbigo} note that 
\begin{equation}
\frac{d^{2}}{dv^{2}}\left(1+\frac{v}{u}\right)^{-\frac{m-1}{2}} = \frac{\left(1+\frac{v}{u}\right)^{-\frac{m-1}{2}}}{4(u+v)^{2}}(m^{2}-1) \leq \frac{m^{2}-1}{4u^{2}}.
\end{equation}
Then inserting \eqref{unifbigo} into \eqref{qbin} the term proportional to $\frac{v^{2}}{u}$ cancels and we obtain \eqref{qsmallu}.
\end{proof}

\begin{lemma}
\label{resbound}
Let the circles $C_{R}$ be defined by
\begin{equation}
z_{k} = -R\alpha+(R+1)\alpha e^{i\theta_{k}}, \qquad -\pi < \theta_{k} \leq \pi, \label{bigcirc}
\end{equation}
and define $z^{(m)} := z_{1}\ldots z_{m}$. Consider the parameters $\omega$ and $\alpha$ as in Proposition \ref{prop:trunco}. Then for $R>0$ large enough, there exist absolute constants $c>0$ and $\delta_{0}>0$ independent of $N$ such that
\begin{equation}
\label{appinfbnd}
\inf_{x \in [-(\alpha+\delta_{0})^{m},(\alpha-\omega)^{m}]}\inf_{\theta_{k} \in [-\pi,\pi], k=1,\ldots,m}|z^{(m)}-x| > c\omega.
\end{equation}
Furthermore, for any $x \in [-(\alpha+\delta_{0})^{m},(\alpha-\omega)^{m}]$ and for each $k=1,\ldots,m$, we have that $\frac{xz_{k}}{z^{(m)}}$ belongs to the interior of $C_{R}$.
\end{lemma}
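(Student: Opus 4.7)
The core ingredient I would establish first is a continuity estimate: the geometry of $C_R$ forces $|z^{(m)}|$ to be close to its minimum $\alpha^m$ only when $z^{(m)}$ itself is close to $\alpha^m$ (hence real and positive, far from the negative real axis where $x$ may lie). Writing $a_j = |z_j|/\alpha - 1 \geq 0$, the inequality $\prod(1+a_j) \geq 1 + \sum a_j$ shows that $\prod_j |z_j| \leq (1+\epsilon_0)\alpha^m$ forces each $|z_j| \leq (1+\epsilon_0)\alpha$; combined with the identity $|z_j|^2 = \alpha^2(1 + 4R(R+1)\sin^2(\theta_j/2))$, this gives $|\sin(\theta_j/2)| \leq C\sqrt{\epsilon_0}/R$ and hence $|z_j - \alpha| = 2(R+1)\alpha|\sin(\theta_j/2)| \leq C_m\alpha\sqrt{\epsilon_0}$. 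Multiplying out yields $|z^{(m)} - \alpha^m| \leq C_m\alpha^m\sqrt{\epsilon_0}$, and the same bound applies to $p_k := \prod_{j \neq k} z_j$ with $m-1$ replacing $m$.

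For the bound \eqref{appinfbnd} I would split on the sign and size of $x$. If $x \in [0,(\alpha-\omega)^m]$, the reverse triangle inequality gives $|z^{(m)}-x| \geq \alpha^m - (\alpha-\omega)^m \geq m(\alpha-\omega)^{m-1}\omega \geq c\omega$. If $x < 0$ with $|x| \leq \alpha^m/2$, the trivial bound $|z^{(m)}-x| \geq \alpha^m/2$ suffices. For $x \in [-(\alpha+\delta_0)^m, -\alpha^m/2]$, fix $\epsilon_0 > 0$ small and choose $\delta_0$ so that $(\alpha+\delta_0)^m < (1+\epsilon_0/2)\alpha^m$. If $|z^{(m)}| \leq (1+\epsilon_0)\alpha^m$, the continuity estimate gives $\mathrm{Re}(z^{(m)}) \geq (1-C_m\sqrt{\epsilon_0})\alpha^m$, whence $|z^{(m)}-x| \geq \mathrm{Re}(z^{(m)}) + |x| \geq \alpha^m$ for $\epsilon_0$ small; otherwise $|z^{(m)}| > (1+\epsilon_0)\alpha^m > |x|$ gives $|z^{(m)}-x| > (\epsilon_0/2)\alpha^m$. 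In either branch the lower bound is a positive constant independent of $N$, exceeding $c\omega$ for $\omega$ small.

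For the interiority claim, set $\xi_k = xz_k/z^{(m)} = x/p_k$ and observe that $\xi_k$ lies in the interior of $C_R$ iff $\Phi := \alpha^2 - |\xi_k|^2 + 2R\alpha(\alpha - \mathrm{Re}(\xi_k)) > 0$. I would handle three sub-cases. For $x \geq 0$: $|\xi_k| \leq x/\alpha^{m-1} \leq \alpha - \omega$ bounds both $|\xi_k|$ and $\mathrm{Re}(\xi_k)$ by $\alpha-\omega$, so $\Phi \geq (2R+1)\alpha\omega > 0$. For $x < 0$ with $\mathrm{Re}(p_k) \geq 0$: $\mathrm{Re}(\xi_k) = x\,\mathrm{Re}(p_k)/|p_k|^2 \leq 0$ while $|\xi_k|^2 \leq \alpha^2(1+\delta_0/\alpha)^{2m}$, so $\Phi \geq 2R\alpha^2 - O(\alpha^2\delta_0) > 0$ once $R$ is large and $\delta_0$ is small. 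For $x < 0$ with $\mathrm{Re}(p_k) < 0$: the continuity estimate applied to $p_k$ rules out $|p_k| \leq (1+\epsilon_0)\alpha^{m-1}$ (this would force $p_k$ within $C_m\alpha^{m-1}\sqrt{\epsilon_0}$ of $\alpha^{m-1}$, contradicting $\mathrm{Re}(p_k)<0$), so $|p_k| > (1+\epsilon_0)\alpha^{m-1}$; choosing $\delta_0$ so that $(1+\delta_0/\alpha)^m < \sqrt{1+\epsilon_0}$ then gives $|\xi_k| < \alpha/\sqrt{1+\epsilon_0}$, making both $\alpha^2 - |\xi_k|^2$ and $\alpha - \mathrm{Re}(\xi_k)$ strictly positive. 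The main obstacle throughout is the continuity estimate: converting a modulus bound on $\prod|z_j|$ into a bound on $z^{(m)} - \alpha^m$ itself relies on the geometric fact that $C_R$ meets the circle $\{|z|=\alpha\}$ only at the single real point $z=\alpha$.
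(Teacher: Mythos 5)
Your proof is correct, but it follows a genuinely different route from the paper's argument, even though both rest on the same geometric fact that $C_R$ meets the circle $\{|z|=\alpha\}$ only at $z=\alpha$. The paper's proof splits on whether some $|\theta_k| \geq \epsilon/R$; in the ``all angles small'' case it estimates $\mathrm{Arg}(z_j)$ term by term (via explicit $\arctan$ bounds) to control the total winding $\mathrm{Arg}(z^{(m)})$, and in the ``some angle large'' case it shows $|z^{(m)}|$ is bounded away from $\alpha^m$ by a constant. You instead prove a reusable \emph{continuity estimate} once --- converting a bound $\prod|z_j| \leq (1+\epsilon_0)\alpha^m$ into $|z^{(m)}-\alpha^m| = O(\alpha^m\sqrt{\epsilon_0})$ via the elementary inequality $\prod(1+a_j)\geq 1+\sum a_j$ and the identity $|z_j|^2 = \alpha^2\bigl(1+4R(R+1)\sin^2(\theta_j/2)\bigr)$ --- and then split on whether $|z^{(m)}|$ is within $(1+\epsilon_0)$ of $\alpha^m$. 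These are morally the same dichotomy (large total modulus versus small winding), but your formulation is more modular: the winding control falls out of the modulus estimate rather than being derived separately. For the interiority claim, the paper bounds $|xz_\ell/z^{(m)}|$ and, when needed, its argument; you instead use the explicit algebraic condition $\Phi = \alpha^2-|\xi_k|^2+2R\alpha(\alpha-\mathrm{Re}(\xi_k)) > 0$ and split on $\mathrm{sgn}(\mathrm{Re}(p_k))$, with your continuity estimate supplying the contradiction when $\mathrm{Re}(p_k)<0$ and $|p_k|$ is close to $\alpha^{m-1}$. The main thing your approach buys is cleaner bookkeeping and making the $R$-dependence transparent in the quadratic form $\Phi$; the paper's argument is more directly computational and does not need the product-telescoping step. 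One small caution: your case analysis implicitly assumes $m\geq 2$ when invoking the continuity estimate for the $(m-1)$-fold product $p_k$; for $m=1$ one has $p_k=1$ with $\mathrm{Re}(p_k)>0$, so your third interiority case never arises, which is fine but worth stating.
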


\begin{proof}
We have
\begin{align}
|z_{k}|^{2} = 2\alpha^{2}R^{2}(1-\cos(\theta_{k}))+2R\alpha^{2}(1-\cos(\theta_{k}))+\alpha^{2} \label{radialform},
\end{align}
and clearly each $|z_{k}| \geq \alpha$. When $x>0$ we have
\begin{equation}
|z^{(m)}-x| \geq ||z^{(m)}|-|x|| \geq \alpha^{m}-(\alpha-\omega)^{m} \geq c\omega.
\end{equation}
We also have
\begin{equation}
\bigg{|}\frac{xz_{\ell}}{z^{(m)}}\bigg{|} \leq \frac{(\alpha-\omega)^{m}}{\alpha^{m-1}} \leq \alpha(1-\omega/\alpha)^{m} < \alpha,
\end{equation}
and therefore $\frac{xz_{\ell}}{z^{(m)}}$ belongs to the interior of $C_{R}$, as $C_{R}$ strictly includes any disc of radius smaller than $\alpha$. 

If $x\leq 0$, in particular when $x$ is close to $-\alpha^{m}$ we have to check that there is not too much winding in the product $z^{(m)}$ that might allow $\phi := \mathrm{Arg}(z^{(m)}) = \pi$. We start by supposing that for at least one $k=1,\ldots,m$, we have $|\theta_{k}| \geq \epsilon/R$. Then for $R$ large enough and any fixed $\epsilon>0$
\begin{equation}
1-\cos(\theta_{k}) \geq 1-\cos(\epsilon/R) \geq \left(\frac{\epsilon}{R}\right)^{2}\frac{1}{\pi}. \label{jordbnd}
\end{equation}
Hence, for this $k$, inserting \eqref{jordbnd} into \eqref{radialform} gives $|z_{k}|^{2} \geq \alpha^{2}(1+\epsilon^{2}/2)$. Using this and $|z_{j}| \geq \alpha$ shows that $|z^{(m)}| \geq \alpha^{m}(1+\epsilon^{2}/8)$. Furthermore $|x| < \alpha^{m}(1+c_{m}\delta_{0})$ for some $c_{m}>0$ depending only on $m$. Then
\begin{equation}
|z_{1}\ldots z_{m}-x| \geq ||z_{1}|\ldots |z_{m}|-|x|| \geq \alpha^{m}(\epsilon^{2}/8-c_{m}\delta_{0}) > c\omega, \label{radialbnd}
\end{equation}
which is valid for any $\epsilon > \sqrt{8}\sqrt{c_{m}\delta_{0}+\omega}$. Thus we see that by choosing $\delta_{0}$ small enough we can allow arbitrarily small $\epsilon>0$. The same bound shows that for any $\ell=1,\ldots,m$, we have
\begin{equation}
|xz_{\ell}/z^{(m)}| \leq \alpha\frac{1+c_{m}\delta_{0}}{1+\frac{\epsilon^{2}}{8}} < \alpha,
\end{equation}
which implies that $xz^{\ell}/z^{(m)}$ is strictly in the interior of $C_{R}$. This bounds the region where at least one of the $|\theta_{k}| \geq \epsilon/R$. The complement of this region is where $|\theta_{j}| < \epsilon/R$ for all $j=1,\ldots,m$. We claim that on this region
\begin{equation}
|\mathrm{Arg}(z_1 \ldots z_{m})| = |\mathrm{Arg}(z_1) + \ldots + \mathrm{Arg}(z_{m})| \leq 2\epsilon m. \label{argbound}
\end{equation}
To prove this we compute the argument of each $z_{j}$,
\begin{equation}
\mathrm{Arg}(z_{j}) = \mathrm{tan}^{-1}\left(\frac{\sin(\theta_{j})}{-\frac{R}{R+1}+\cos(\theta_{j})}\right).
\end{equation}
By symmetry it suffices to assume $\theta_{j} \geq 0$. Using $\sin(x) \leq x$, we have for the numerator $
\sin(\theta_{j}) \leq \theta_{j} \leq \epsilon/R$. The cosine inequality $\cos(x) \geq 1-2x/\pi$ gives for the denominator $-\frac{R}{R+1}+\cos(\theta_{j}) \geq  \frac{1}{R+1}-2\epsilon/(\pi R)$ and clearly $\frac{R}{R+1}-2\epsilon/\pi > \frac{1}{2}$. These bounds imply that 
\begin{equation}
\frac{\sin(\theta_{j})}{-\frac{R}{R+1}+\cos(\theta_{j})} \leq 2\epsilon.
\end{equation}
Now by monotonicity of $\mathrm{tan}^{-1}$ we have $\mathrm{Arg}(z_{j}) \leq 2\epsilon$ and this implies \eqref{argbound}. We thus have
\begin{align}
|z^{(m)}-x|^{2} = |z|^{2}+x^{2}-2|z|x\cos(\phi) \geq \alpha^{2m} > \omega,
\end{align}
which holds for example if $\phi := \mathrm{Arg}(z^{(m)}) < \pi/2$ (note that \eqref{argbound} implies $\phi < 2\epsilon m$). We also have
\begin{equation}
\begin{split}
|\mathrm{Arg}(z_{\ell}x/z^{(m)})-\pi| &\leq 2m\epsilon,\\
\bigg{|}\frac{z_{\ell}x}{z^{(m)}}\bigg{|} &\leq \alpha(1+\delta_{0})^{m},
\end{split}
\end{equation}
and choosing $\delta_{0}$ and $\epsilon$ small enough implies the strict inclusion of $z_{\ell}x/z^{(m)}$ inside the contour $C_{R}$.
\end{proof}

\section{Exact calculation of a multiple integral}
In this section we compute the integral from \eqref{alm} explicitly, namely the integral
\begin{equation}
A_{L,m} = \frac{1}{\Gamma\left(\frac{L}{2}\right)^{2m}}\int_{\mathbb{R}_{+}^{2m}}d\vec{t}\,d\vec{r}\,\prod_{\ell=1}^{m}t_{\ell}^{\frac{L}{2}-1}r_{\ell}^{\frac{L}{2}-1}e^{-t_{\ell}-r_{\ell}}\,\frac{t_{1}+\ldots+t_{m}}{2}\,\mathbbm{1}_{r_{1}+\ldots+r_{m} < t_{1}+\ldots+t_{m}}.
\end{equation}
\begin{lemma}
We have 
\begin{equation}
A_{L,m} = \frac{mL}{8}+\frac{1}{2}\frac{\Gamma(mL)}{\Gamma\left(\frac{mL}{2}\right)^{2}}\,2^{-mL}.
\end{equation}
\end{lemma}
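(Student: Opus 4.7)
The key observation is that the integrand factorises over the variables into the densities of $2m$ independent Gamma$(L/2,1)$ random variables. Introducing independent Gamma-distributed random variables $t_1,\ldots,t_m,r_1,\ldots,r_m$ each of shape $L/2$ and rate $1$, the constant $A_{L,m}$ admits the probabilistic representation
\begin{equation*}
A_{L,m} = \frac{1}{2}\,\mathbb{E}\bigl[(t_1+\cdots+t_m)\mathbbm{1}_{r_1+\cdots+r_m < t_1+\cdots+t_m}\bigr].
\end{equation*}
Setting $T:=t_1+\cdots+t_m$ and $R:=r_1+\cdots+r_m$, additivity of independent Gammas gives $T,R\sim\Gamma(mL/2,1)$ and they are independent.

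The plan is therefore to reduce the problem to computing $\mathbb{E}[T\mathbbm{1}_{R<T}]$ for two i.i.d.\ Gamma variables of shape $k:=mL/2$. The first step is a symmetry argument: swapping $T\leftrightarrow R$ shows $\mathbb{E}[R\mathbbm{1}_{T<R}]=\mathbb{E}[T\mathbbm{1}_{R<T}]$, hence $\mathbb{E}[(T+R)\mathbbm{1}_{R<T}]=\tfrac{1}{2}\mathbb{E}[T+R]=mL/2$. Combining this with $\mathbb{E}[(T-R)\mathbbm{1}_{R<T}]=\tfrac{1}{2}\mathbb{E}|T-R|$ yields
\begin{equation*}
A_{L,m}=\frac{1}{2}\mathbb{E}[T\mathbbm{1}_{R<T}] = \frac{mL}{8} + \frac{1}{8}\,\mathbb{E}|T-R|,
\end{equation*}
so the problem is reduced to a single quantity, $\mathbb{E}|T-R|$.

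To evaluate $\mathbb{E}|T-R|$ I would use the Beta--Gamma algebra. Writing $S:=T+R\sim\Gamma(2k,1)$ and $V:=T/S$, it is classical that $V\sim\mathrm{Beta}(k,k)$ and that $S$ and $V$ are independent. Since $T-R=S(2V-1)$, independence gives $\mathbb{E}|T-R|=\mathbb{E}(S)\,\mathbb{E}|2V-1|=2k\,\mathbb{E}|2V-1|$. The expectation $\mathbb{E}|2V-1|$ is computed by symmetry of the Beta$(k,k)$ density around $1/2$ followed by the substitution $w=2v-1$, reducing it to $\int_{0}^{1}w(1-w^{2})^{k-1}dw=1/(2k)$; with the Beta normalisation this gives
\begin{equation*}
\mathbb{E}|2V-1| = \frac{2\,\Gamma(2k)}{k\,4^{k}\,\Gamma(k)^{2}},
\qquad
\mathbb{E}|T-R| = \frac{4\,\Gamma(2k)}{4^{k}\,\Gamma(k)^{2}}.
\end{equation*}
Substituting $k=mL/2$ into the identity for $A_{L,m}$ yields $A_{L,m}=\tfrac{mL}{8}+\tfrac{1}{2}\,2^{-mL}\,\Gamma(mL)/\Gamma(mL/2)^{2}$, as claimed. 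No step looks genuinely difficult: the only moment where one has to be careful is the symmetry-based reduction of $\mathbb{E}[T\mathbbm{1}_{R<T}]$, where the equality $\mathbb{E}[R\mathbbm{1}_{T<R}]=\mathbb{E}[T\mathbbm{1}_{R<T}]$ must be used before splitting $(T+R)\mathbbm{1}_{R<T}$ from $(T-R)\mathbbm{1}_{R<T}$; the rest is routine Beta--Gamma calculus.
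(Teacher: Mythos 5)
Your proof is correct, and it takes a genuinely different route from the paper's. The paper works with the full $2m$-dimensional integral throughout: it first uses permutation invariance to replace $\tfrac{1}{2}(t_1+\cdots+t_m)$ by $\tfrac{m}{2}t_m$, then uses the $t\leftrightarrow r$ symmetry to remove the indicator from an auxiliary integral, performs a one-dimensional integration by parts in $r_m$ to produce a boundary term, and finally collapses the boundary term by a telescoping cascade of Beta integrals in the $r_\ell$ and then the $t_\ell$ variables. Your approach instead collapses all $m$ dimensions in one stroke: reinterpreting the integrand as the joint density of $2m$ independent $\Gamma(L/2,1)$ variables, Gamma additivity immediately reduces everything to $\tfrac12\mathbb{E}[T\mathbbm{1}_{R<T}]$ with $T,R$ i.i.d.\ $\Gamma(mL/2,1)$. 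The split into $\tfrac12(T+R)\mathbbm{1}_{R<T}+\tfrac12(T-R)\mathbbm{1}_{R<T}$, combined with exchangeability, reduces the whole computation to a single scalar quantity $\mathbb{E}|T-R|$, which you evaluate via the Beta--Gamma algebra ($T/(T+R)\sim\mathrm{Beta}(k,k)$ independent of $T+R$) and an elementary integral. I checked the arithmetic: with $k=mL/2$ you get $\mathbb{E}|2V-1|=2\Gamma(2k)/(k\,4^k\Gamma(k)^2)$, hence $\mathbb{E}|T-R|=4\Gamma(2k)/(4^k\Gamma(k)^2)$ and $A_{L,m}=\tfrac{mL}{8}+\tfrac18\mathbb{E}|T-R|$, which matches the stated formula (and the sanity check $A_{2,1}=3/8$ works). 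What your route buys is conceptual clarity and avoidance of the iterated Beta reductions; what the paper's route buys is that it stays entirely at the level of explicit integrals, which fits the style of the surrounding Laplace-method calculations and requires no probabilistic machinery. The two are mathematically equivalent, but yours is shorter and, I think, easier to audit.
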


\begin{proof}
By permutation invariance it is sufficient to consider the following
\begin{equation}
B_{L,m} := \frac{1}{\Gamma\left(\frac{L}{2}\right)^{2m}}\int_{\mathbb{R}_{+}^{2m}}d\vec{t}\,d\vec{r}\,\prod_{\ell=1}^{m}t_{\ell}^{\frac{L}{2}-1}r_{\ell}^{\frac{L}{2}-1}e^{-t_{\ell}-r_{\ell}}\,t_{m}\,\mathbbm{1}_{r_{1}+\ldots+r_{m} < t_{1}+\ldots+t_{m}},
\end{equation}
so that clearly $A_{L,m} = \frac{m}{2}B_{L,m}$. We would also prefer to replace the $t_{m}$ with $r_{m}$. Note that if we replaced $t_{m}$ with $t_{m}+r_{m}$, then by symmetry between the $r_{\ell}$ and $t_{\ell}$ variables we can drop the indicator function after multiplying by a factor $\frac{1}{2}$. The remaining integrals are explicit and we have
\begin{equation}
\frac{1}{\Gamma\left(\frac{L}{2}\right)^{2m}}\int_{\mathbb{R}_{+}^{2m}}d\vec{t}\,d\vec{r}\,\prod_{\ell=1}^{m}t_{\ell}^{\frac{L}{2}-1}r_{\ell}^{\frac{L}{2}-1}e^{-t_{\ell}-r_{\ell}}\,(t_{m}+r_{m})\,\mathbbm{1}_{r_{1}+\ldots+r_{m} < t_{1}+\ldots+t_{m}} = \frac{L}{2}.
\end{equation}
We thus have $B_{L,m} = \frac{L}{2}-F_{L,m}$ where
\begin{equation}
F_{L,m} := \frac{1}{\Gamma\left(\frac{L}{2}\right)^{2m}}\int_{\mathbb{R}_{+}^{2m}}d\vec{t}\,d\vec{r}\,\prod_{\ell=1}^{m}t_{\ell}^{\frac{L}{2}-1}r_{\ell}^{\frac{L}{2}-1}e^{-t_{\ell}-r_{\ell}}r_{m}\,\mathbbm{1}_{r_{1}+\ldots+r_{m} < t_{1}+\ldots+t_{m}}.
\end{equation}
To compute $F_{L,m}$, we will integrate by parts in the variable $r_{m}$, differentiating the factor $r_{m}^{\frac{L}{2}}$ and integrating $e^{-r_{m}}$. The second term in the integration by parts is thus completely explicit, again by symmetry, and gives a contribution $\frac{L}{4}$. Thus we have $F_{L,m} = \frac{L}{4}-D_{L,m}$ where $D_{L,m}$ is the boundary term of the integration by parts. To calculate this, we parameterise the integration such that $\vec{t} \in \mathbb{R}_{+}^{m}$ are unconstrained, while $\vec{r}$ satisfy the constraints $K_{m+1-\ell} := \{0 < r_{m+1-\ell} < t_{1}+\ldots+t_{m}-(r_{1}+\ldots+r_{m-\ell}\}$ where $\ell=1,\ldots,m$. Setting $r_{m} = t_{1}+\ldots+t_{m}-(r_{1}+\ldots+r_{m-1})$ results in the boundary term,
\begin{equation}
D_{L,m} = \frac{1}{\Gamma\left(\frac{L}{2}\right)^{2m}}\int_{\mathbb{R}_{+}^{m}}d\vec{t}\,
\prod_{\ell=1}^{m}t_{\ell}^{\frac{L}{2}-1}e^{-2t_{\ell}}\prod_{\ell=1}^{m-1}\int_{K_{\ell}}dr_{\ell}\,r_{\ell}^{\frac{L}{2}-1}(t_{1}+\ldots+t_{m}-(r_{1}+\ldots+r_{m-1}))^{\frac{L}{2}-1}.
\end{equation}
Then we integrate over each $r_{\ell}$ variable, starting with $r_{m-1}$, repeatedly using the formula
\begin{equation}
\int_{0}^{u}dr_{\ell}\,r_{\ell}^{a-1}(u-r_{\ell})^{b-1} = u^{a+b-1}\,\frac{\Gamma(a)\Gamma(b)}{\Gamma(a+b)}, \label{betaint}
\end{equation}
for each integral. We obtain
\begin{equation}
D_{L,m} =\frac{1}{\Gamma\left(\frac{L}{2}\right)^{2m}}\left(\prod_{\ell=1}^{m-1}\frac{\Gamma\left(\frac{\ell L}{2}+1\right)\Gamma\left(\frac{L}{2}\right)}{\Gamma\left(\frac{\ell L}{2}+\frac{L}{2}+1\right)}\right)\int_{\mathbb{R}_{+}^{m}}d\vec{t}\,
\prod_{\ell=1}^{m}t_{\ell}^{\frac{L}{2}-1}e^{-2t_{\ell}}(t_{1}+\ldots+t_{m})^{\frac{mL}{2}}.
\end{equation}
Now to compute the integrals over the $t_{\ell}$ variables we substitute the $t_{m}$ variable with $u = t_{1}+\ldots+t_{m}$ and parameterise such that $u \in \mathbb{R}_{+}$, while $t_{1},\ldots,t_{m-1}$ satisfy the constraints $H_{m-j} = \{0 < t_{m-j} < u-(t_{1}+\ldots+t_{m-j-1}\}$ where $j=1,\ldots,m-1$. The integrals over $H_{m-j}$ are computed again using \eqref{betaint} repeatedly and we find
\begin{align}
D_{L,m} &=\frac{1}{\Gamma\left(\frac{L}{2}\right)^{2m}}\left(\prod_{\ell=1}^{m-1}\frac{\Gamma\left(\frac{\ell L}{2}+1\right)\Gamma\left(\frac{L}{2}\right)}{\Gamma\left(\frac{\ell L}{2}+\frac{L}{2}+1\right)}\,\frac{\Gamma\left(\frac{\ell L}{2}\right)\Gamma\left(\frac{L}{2}\right)}{\Gamma\left(\frac{\ell L}{2}+\frac{L}{2}\right)}\right)\,\int_{0}^{\infty}du\,u^{mL-1}e^{-2u}\\
&= \frac{1}{\Gamma\left(\frac{L}{2}\right)^{2m}}\left(\prod_{\ell=1}^{m-1}\frac{\Gamma\left(\frac{\ell L}{2}+1\right)\Gamma\left(\frac{L}{2}\right)}{\Gamma\left(\frac{\ell L}{2}+\frac{L}{2}+1\right)}\,\frac{\Gamma\left(\frac{\ell L}{2}\right)\Gamma\left(\frac{L}{2}\right)}{\Gamma\left(\frac{\ell L}{2}+\frac{L}{2}\right)}\right)\,2^{-mL}\Gamma(mL) \label{prodterms}\\
&= \frac{\Gamma(mL)}{m\Gamma(\frac{mL}{2})^{2}}\,2^{-mL}, \label{dlmfinal}
\end{align}
where we exploited the cancellation of successive terms in the products \eqref{prodterms}. Substituting \eqref{dlmfinal} in the formula $A_{L,m} = \frac{mL}{8}+\frac{m}{2}\,D_{L,m}$ concludes the proof of the Lemma.
\end{proof}

\bibliographystyle{abbrv}
\bibliography{productsbib}

\end{document}